\newcommand{\numberset}[1]{\ensuremath{\mathbb{#1}}}    
\newcommand{\C}{\numberset{C}}  
\newcommand{\R}{\numberset{R}}  
\newcommand{\Z}{\numberset{Z}}  
\newcommand{\PP}{\numberset{P}}  
\newcommand{\RP}{\numberset{R}\numberset{P}}
\newcommand{\inn}[2]{ \langle {#1}, {#2} \rangle}
\newcommand{\DF}{\mathcal D^bFuk}
\newcommand{\DC}{\mathcal D^bCoh}
\newcommand{\h}{\mathbf{h}}
\newcommand{\y}{\mathbf{y}}
\newcommand{\x}{\mathbf{x}}
\newcommand{\gb}{\mathbf{g}}
\theoremstyle{definition}
\newtheorem{thm}{Theorem}[section]
\newtheorem{prop}[thm]{Proposition}
\newtheorem{lem}[thm]{Lemma}
\newtheorem{cor}[thm]{Corollary}
\newtheorem{rem}[thm]{Remark}
\newtheorem{ex}[thm]{Example}
\newtheorem{defi}[thm]{Definition}
\DeclareMathOperator{\conv}{Conv} 
\DeclareMathOperator{\cone}{Cone} 
\DeclareMathOperator{\vol}{vol}
\DeclareMathOperator{\spn}{span} 
\DeclareMathOperator{\Hom}{Hom}
\DeclareMathOperator{\hes}{Hess}
\DeclareMathOperator{\Log}{Log}
\DeclareMathOperator{\inter}{Int}
\newcommand{\mycomments}[1]{
           \ifthenelse{\boolean{mynotes}}
                      {#1}{}
           }
\newcommand{\marginlabel}[1]
{\mbox{}\marginpar[\raggedleft $\longrightarrow$ \\ \tiny\sf #1]{\raggedright $\longleftarrow$\\ \tiny\sf #1}}
\newcommand{\todo}[1]{\mycomments{\marginlabel{\tiny\sf{#1}}}}
\begin{document}

\title{Lagrangian pairs of pants}
\author{Diego Matessi}

\begin{abstract}
We construct a Lagrangian submanifold, inside the cotangent bundle of a real torus, which we call a Lagrangian pair of pants. It is given as the graph of the differential of a smooth function defined on the real blow up of a Lagrangian coamoeba. Lagrangian pairs of pants are the main building blocks in a construction of smooth Lagrangian submanifolds of $(\C^*)^n$ which lift tropical subvarieties in $\mathbb R^n$. As an example we explain how to lift tropical curves in $\R^2$ to Lagrangian submanifolds of $(\C^*)^2$. We also give several new examples of Lagrangian submanifolds inside toric varieties, some of which are monotone. 
\end{abstract}

\maketitle


\section{Introduction}
\subsection{Summary of main results.}
Applications of tropical geometry to problems in complex geometry are now abundant in the literature, starting from the pioneering work of Mikhalkin, who proved the first ``tropical to complex'' correspondence theorem \cite{Mikh-tropical} where the enumeration of curves in $\PP^2$ was proved to be equivalent to the enumeration of tropical curves in $\R^2$.  One way to think about such correspondence theorems is that a tropical hypersurface $\Xi$ in $\R^n$ encodes the information to produce a one parameter family of algebraic hypersurfaces $Y_t$ in $(\C^*)^n$, which in some sense ``converge'' to the tropical hypersurface. More precisely, if one considers the one parameter family of maps $\Log_t: (\C^*)^n \rightarrow \R^n$ given by $\Log_t(z_1, \ldots, z_n) = (\log_t |z_1|, \ldots, \log_t|z_n|)$, then the amoebas of $Y_t$, i.e. the sets $\mathcal A_t = \Log_t(Y_t)$, converge to the tropical curve. Indeed Mikhalkin \cite{mikh_pants}  proves the stronger result that $Y_t$, in some sense, converges to a complexified tropical hypersurface (also called the phase tropical hypersurface). This is a piecewise linear lift $\hat \Xi \subset (\C^*)^n$ of $\Xi$, constructed by adding some fibres over $\Xi$ (i.e. coamebas). Kerr and Zharkov \cite{kerr_zha_phase_trop} and Kim and Nisse \cite{kim_nisse_phase_trop} have shown that $\hat \Xi$ is a topological manifold homeomorphic to $Y_t$. 

In this paper and in the forthcoming one \cite{trop_hyp_to_lag} we take inspiration from the philosophy of mirror symmetry that symplectic geometry is mirror to complex geometry to prove a symplectic version of the above result. Namely, given a tropical hypersurface $\Xi$ in $\R^n$ we construct a Lagrangian PL lift $\hat \Xi$ in $(\C^*)^n$ (different from the one in complex geometry, but similar) and we prove the following:
\begin{thm} \label{main_thm} Given a smooth tropical hypersurface $\Xi$ in $\R^2$ or $\R^3$, there is a one parameter family of smooth Lagrangian submanifolds $\mathcal L_t$ of respectively $(\C^*)^2$ or $(\C^*)^3$ such that $\mathcal L_t$ is homeomorphic to the PL lift $\hat \Xi$ of $\Xi$ and converges to it in the Hausdorff topology as $t \rightarrow 0$. 
\end{thm}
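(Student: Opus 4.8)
The plan is to assemble $\mathcal L_t$ from explicit local pieces, one for each face of $\Xi$, glued together as graphs of exact one-forms. Throughout I identify $(\C^*)^n$ symplectically with $T^*T^n$ by writing $z_j = \exp(x_j + i\theta_j)$, so that the standard Kähler form becomes
\[ \omega = \sum_j dx_j \wedge d\theta_j, \]
with the argument torus $T^n = \{\theta\}$ as base and the logarithmic coordinates $x = \Log(z)$ as cotangent fibres. The projection to the base is then the argument map, whose image is a coamoeba, and the graph of $df$ for any function $f$ on an open part of $T^n$ is automatically Lagrangian and projects diffeomorphically onto its coamoeba. The entire construction is therefore reduced to producing and patching the right generating functions.

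First I would record the combinatorics of a smooth tropical hypersurface: $\Xi$ is dual to a unimodular triangulation, so near each vertex it is, up to an element of $\Gl_n(\Z)$ and a translation, the standard tropical hyperplane (the corner locus of $\max\{0, x_1, \dots, x_n\}$), while away from the vertices $\Xi$ is a union of flat facets, each lying in a rational affine hyperplane with a primitive conormal $m$. This dictates two local models. Over a flat facet I would take the cylindrical Lagrangian that projects to the facet in $\Log$-coordinates and to a single circle in the direction $m$ in the argument torus; a one-line computation with $\omega$ shows this is Lagrangian precisely because the facet directions span $m^\perp$. Over a vertex I would take the Lagrangian pair of pants furnished by the main construction of the paper, namely the graph of $df$ for the smooth function $f$ on the real blow-up of the coamoeba of the standard pair of pants. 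The property I must extract from that construction is that near each boundary circle of the blown-up coamoeba the pair of pants is asymptotically cylindrical, and that its asymptotic cylinder is exactly the model attached to the corresponding leg of the tropical hyperplane.

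The heart of the argument is then the gluing. Two pair-of-pants models sitting at the ends of a bounded edge of $\Xi$, or a pair-of-pants model and an unbounded cylindrical leg, are both asymptotic to the same cylinder over that edge; I would glue them along a neighbourhood of the shared boundary circle of their blown-up coamoebas by interpolating the two generating functions with a partition of unity. Because the graph of $d(\text{anything})$ is Lagrangian, no integrability condition survives the interpolation, so the only things to check are that the patched function is smooth across the exceptional loci of the blow-ups and that the two cylindrical ends being matched already agree to high order, i.e. carry the same conormal $m$ and the same translation constant. I expect this last point, together with verifying smoothness on the blow-up, to be the main obstacle: it forces a global consistency of the conormal and translation data around every edge of $\Xi$, and it is what confines the theorem to $n = 2$ and $n = 3$, where the incidence of coamoebas around a trivalent vertex of a tropical curve, or around the edges and vertices of a tropical surface, can be checked directly; the general case is deferred to \cite{trop_hyp_to_lag}.

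Finally, to produce the family and prove convergence I would build the glued Lagrangian over the dilated tropical variety $\tfrac{1}{s(t)}\Xi$, keeping the size of each pair of pants fixed, and then rescale the fibres by $x \mapsto s(t)\,x$ with $s(t) = -1/\log t \to 0$ as $t \to 0$. Since $\omega \mapsto s(t)\,\omega$ under fibrewise rescaling, each $\mathcal L_t$ is Lagrangian, and it is diffeomorphic to the fixed combinatorial model, which carries one pair of pants per vertex and one cylinder per edge exactly as $\hat\Xi$ does; hence $\mathcal L_t$ is homeomorphic to $\hat\Xi$. For Hausdorff convergence I would treat the two projections separately: the amoeba $\Log(\mathcal L_t) = s(t)\cdot(\text{fattened } \tfrac{1}{s(t)}\Xi)$ contracts onto $\Xi$ because the fixed fattening is scaled by $s(t) \to 0$, while the argument projection stays equal to the union of coamoebas, which is by construction the fibre data of $\hat\Xi$. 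Controlling both projections simultaneously gives convergence of $\mathcal L_t$ to $\hat\Xi$ in the Hausdorff topology.
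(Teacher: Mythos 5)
Your overall architecture --- a pair of pants at each vertex, a flat cylinder $\check f \times C_f$ over each edge, gluing via generating functions, then rescaling to produce the family and obtain Hausdorff convergence --- is the same as the paper's (Section \ref{lifting_trop}). The gap is in the gluing step, which is where the real work of the paper lies. You propose to glue two pair-of-pants models (or a pair of pants and a cylindrical leg) ``by interpolating the two generating functions with a partition of unity'' near the shared circle of their coamoebas. But as graphs over the argument torus the two pieces have no common base: the pairs of pants at the two endpoints of an edge $\check f$ are graphs of $dF_{e_1}$ and $dF_{e_2}$ over the blown-up coamoebas $\tilde C_{e_1}$ and $\tilde C_{e_2}$, whose interiors are \emph{disjoint} (they share only the circle $C_f$), and as one approaches $C_f$ both graphs become vertical: by Lemma \ref{hnear_faces} the fibre component of $\h$ tends to infinity in the leg direction. (Note also that the asymptotic ends of a pair of pants lie over the open faces $E_J$ of the coamoeba, not over the exceptional loci of the blow-ups; the exceptional circles are interior points of the Lagrangian and map to the compact boundary pieces $\mathcal S_j$ of the amoeba $\mathcal H$.) So a partition of unity applied to $F_{e_1}, F_{e_2}$ on the torus side is undefined exactly where you need it, and in particular cannot produce the flat cylinder over the middle of the edge. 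The missing idea is a \emph{change of graph base via Legendre transform}: near each leg the pair of pants is re-expressed as the graph of $dG_{e,f}$, where $G_{e,f}$ is a Legendre transform of $F$ defined on (a piece of) the PL cylinder $\Gamma_J \times E_J$ itself, over which the flat cylinder is the zero graph. This is Proposition \ref{faceproj} and Corollary \ref{fbr_bndl}, and proving that the re-expression is legitimate --- that the map $\gb_{J,L}$ is a diffeomorphism --- is precisely where the negative-definiteness of the Hessian of $F$ (the Appendix) enters. With this in hand the gluing is easy: multiply $G_{e,f}$ by a cutoff $\eta(x)$ in the edge coordinate, so that the Lagrangian coincides with the pair of pants near the vertex and is \emph{exactly} the flat cylinder further along the edge; the two ends of a bounded edge are damped to the cylinder in disjoint regions and never need to be matched against each other.

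Two further inaccuracies, both minor by comparison. The matching of ``conormal and translation constant'' that you single out as the main obstacle is automatic: the coamoebas $C_{e_1}$ and $C_{e_2}$ contain the same circle $C_f$ by duality with the subdivision $(P,\nu)$, so both ends are asymptotic to the same cylinder by construction. And the restriction to $\R^2$ and $\R^3$ does not come from that matching; it comes from the analytic difficulty of showing that $\h$ is a local diffeomorphism (equivalently, the Hessian estimate of Corollary \ref{HessFneg}), which the paper can establish only for $n=1,2$.
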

The main building blocks in the proof of this theorem are certain Lagrangian submanifolds inside the cotangent bundle of a real torus, which we call Lagrangian pairs of pants. These are the Lagrangian lifts of tropical hyperplanes. After introducing these objects, in this paper we prove the theorem in the case of tropical curves in $\R^2$. We will also give some examples of Lagrangian lifts of non-smooth tropical curves and discuss several examples of Lagrangian submanifolds in toric varieties, including two examples of monotone Lagrangian tori in $\PP^2$ and in $\PP^1 \times \PP^1$.  As we were writing this paper we learned that Mikhalkin also has a similar construction for the case of tropical curves in  $\R^n$. In discussions we had with him he told us about his forthcoming paper \cite{mikh_trop_to_lag}, where he will also give some interesting applications to enumerative problems of special Lagrangian submanifolds. He also told us a very nice way to use Lagrangian lifts of tropical curves to construct examples of non-orientable Lagrangian surfaces in $\C^2$, see \S  \ref{non_orientable_ex} for a sketch of this idea.

The restriction to smooth tropical hypersurfaces in $\R^2$ or $\R^3$ is due to technical difficulties (analytic and combinatorial) which we could not for the moment overcome, but we are convinced that the statement should be provable for general tropical subvarieties of any codimension using substantially the same method. Since the proof of Theorem \ref{main_thm} for the $3$-dimensional case is longer and technically more difficult we decided to write it in a separate paper \cite{trop_hyp_to_lag}. 

\subsection{Lagrangian pairs of pants} Given a real, $n$-dimensional torus $T$, a Lagrangian coamoeba inside $T$ is a subset $C$ which topologically is given by two copies of an $n$-dimensional simplex glued together at the vertices (see Figure \ref{co_am}). The name is inspired by the objects with the same name in the context of complex tropical geometry (\cite{nisse_sottile_phase_limit}, \cite{nisse_sottile_nonAch_coam}). Lagrangian coamoebas are naturally dual to tropical hyperplanes, in particular there is an inclusion reversing duality between faces of $C$ and cones of the tropical hyperplane.  We find a smooth function $F: C \rightarrow \R$ such that the graph of its differential in the cotangent bundle $T^*T$ of $T$ extends to a smooth embedding of the real blow up of $C$ at its vertices. A Lagrangian pair of pants is the image $L$ of this embedding. Now observe that $T^*T$ is symplectomorphic to $(\C^*)^n$, thus we have that $L$ is a Lagrangian submanifold of $(\C^*)^n$. If we project to $\R^n$ via the $\Log$ map, the image of $L$ resembles the amoeba of a complex hyperplane (see Figure \ref{lagrangian_amoeba}).

\subsection{Lagrangian PL lifts of tropical hypersurfaces}
Roughly speaking a tropical hypersurface $\Xi$ in $\R^n$ is a union of $(n-1)$-dimensional rational polyhedra glued along faces. To each polyhedron we attach a weight, i.e. a positive integer,  and we require a balancing condition at the intersections of these polyhedra. The most basic examples are the tropical hyperplanes, these consist of the non-smooth locus of the function $\min(0, x_1, \ldots, x_n)$. The smoothness condition in the statement of Theorem \ref{main_thm} consists in requiring that an $(n-2)$-dimensional face is the intersection of exactly three polyhedra and that locally near this face $\Xi$ looks like a tropical line times the face. The polyhedra, in the smooth case, are unweighted. A Lagrangian PL lift of $\Xi$ is a subset $\hat \Xi$ of $(\C^*)^n$ such that the following diagram commutes
\begin{equation} \label{diagr_lift}
    \begin{tikzcd}
         \hat \Xi  \arrow[d]  \arrow[r] & (\C^*)^n  \arrow[d]\\
         \Xi \arrow[r] & \R^n
    \end{tikzcd}
\end{equation}
where the horizontal arrows are the inclusions and the vertical one is the $\Log$ map.  Let us view  $(\C^*)^n$ as $\R^n \times (\R^n / \Z^n)$. Given a point $b$ in the interior of an $(n-1)$-dimensional face of $\Xi$, the fibre over $b$ in $\hat \Xi$ is the circle spanned by the orthogonal complement to the face. More generally if $b$ is a point in the interior of a $k$-dimensional face of $\Xi$ then the fibre is an $(n-k)$ dimensional coamoeba inside the sub-torus spanned by the orthogonal complement of the face. This ensures that $\hat \Xi$ is a topological submanifold which is Lagrangian at smooth points.  For instance, in the case of a tropical hyperplane $\Gamma$, its $k$-dimensional faces (or cones) are labeled by $\Gamma_J$, where $J$ is a multi index of length $k$. The $n-k$-dimensional face of the Lagrangian coamoeba $C$ which is dual to $\Gamma_J$ is labeled by $E_J$. We have that the lift of $\Gamma_J$ is
\[ \hat \Gamma_J = \Gamma_J \times E_J \]
and the lift of $\Gamma$ is the union of the lifts of all of its cones. 

\subsection{Sketch of the proof of Theorem \ref{main_thm}}
One of the interesting properties of Lagrangian pairs of pants is that the image of a neighborhood of a face $E_J$ is also the graph of the differential of another function defined on the lift $\hat \Gamma_J$ of $\Gamma_J$. This function is a Legendre transform of $F$ (see Corollary \ref{fbr_bndl}). This is the key idea which allows us to glue together Lagrangian pairs of pants to build the family $\mathcal L_t$ of Theorem \ref{main_thm}. In fact if $\Xi$ is a smooth tropical hypersurface, every vertex locally looks like a tropical hyperplane, thus we can locally replace the PL lift with a Lagrangian pair of pants. Then we need to glue together Lagrangian pairs of pants over vertices which lie on the same face. This can be done by perturbing the functions defined on the lift of this face (i.e. the Legendre transforms of $F$). This program is particularly simple in the case of tropical curves in $\R^2$ (see Section \ref{lifting_trop}). 

\subsection{Generalizations and examples} In Sections \ref{generalizations} and \ref{intoric} we give various examples and discuss possible generalizations of the construction. For instance we sketch how to lift tropical curves in higher codimensions, we give some examples of lifts of non-smooth tropical curves and we show how the same tropical curve admits various Lagrangian lifts obtained by twisting a given lift by local sections. Finally we give various examples of Lagrangian submanifolds of toric varieties.  The idea to construct surfaces inside toric and almost toric varieties by lifting curves in the moment polytope can also be found in the work of Symington, see Definition 7.3 and Theorem 7.4 of \cite{sym_at}.  The complement of the toric boundary of a smooth toric variety is symplectomorphic to $\Delta^o \times T$, where $\Delta^o$ is the interior of a Delzant polytope $\Delta \subset \R^2$ and $T$ is the $2$-torus. Thus we can lift a tropical curve $\Xi$ in $\Delta$ to a Lagrangian submanifold of $\Delta^o \times T$ and then take its closure $\mathcal L$ in the toric variety $X_{\Delta}$. The topology of $\mathcal L$ depends on how the edges of $\Xi$ hit the boundary of $\Delta$. For instance $\mathcal L$ may or may not have boundary. In particular, if the edges of $\Xi$ end on the vertices of $\Delta$ and ``bisect'' the angle at these vertices, then $\mathcal L$ is smooth without boundary. For example the lifts of the tropical curves in Figures \ref{co_am_torus_p2} and  \ref{co_am_torus_p1p1} are Lagrangian tori respectively in $\PP^2$ and $\PP^1 \times \PP^1$. One interesting case, which was pointed out to us by Mikhalkin (see also his forthcoming article \cite{mikh_trop_to_lag}), is when the edges hit the boundary with ``multiplicity two'', in which case taking the closure has the effect of gluing in a  M\"obius strip. This produces examples of non-orientable surfaces, see Figure \ref{non_orientable} for an example of a tropical curve which lifts to a Lagrangian surface in $\C^2$ with Euler characteristic $-4$. 

We can also construct two interesting examples of monotone Lagrangian tori in $\PP^2$ and in $\PP^1 \times \PP^1$. These are the lifts respectively of the tropical curves in Figure \ref{co_am_genus1_p2} and \ref{monotone_p1p1}. We do not know whether these examples are Hamiltonian isotopic to other known monotone tori (\cite{chekanov_schlenk_tori}, \cite{vianna_infte_montone_tori}, \cite{abreu_gadbled_monotone_lag}). 

\subsection{Mirror symmetry}  Let us view the results in this article in the context of mirror symmetry, which is one of our main motivation for this work. The homological mirror symmetry conjecture states that given two mirror Calabi-Yau manifolds $X$ and $\check X$ then the derived Fukaya category $\DF(\check X)$ of $\check X$ should be equivalent to the derived category of coherent sheaves $\DC(X)$ of $X$. 
Objects in the former category are Lagrangian submanifolds with vanishing Maslov class. In the Strominger-Yau-Zaslow (SYZ) interpretation of mirror symmetry, two mirror Calabi-Yau manifolds $X$ and $\check X$ should come with dual special Lagrangian torus fibrations $f: X \rightarrow B$ and $\check f: \check X \rightarrow B$. 
These fibrations in general have singular fibres, so let $B_0$ be the locus of smooth fibres. The Lagrangian condition implies that $B_0$ carries an integral affine structure, thus locally $B$ looks like $M_{\R} = M \otimes \R$, where $M$ is an $n$-dimensional lattice (i.e. $M \cong \Z^n$). If we view $X$ as a complex manifold and $\check X$ as a symplectic one, the duality condition on the two fibrations imply that locally $X$ looks like $M_{\R} \times M_{\R}/M$ and $\check X$ looks like $M_{\R} \times N_{\R}/N$, where $N = \Hom(N, \Z)$.  
The integral affine structure makes of $B$ an ambient space where we can define tropical subvarieties.  So let $\Xi$ be a $d$-dimensional tropical subvariety of $B$ with boundary on the discriminant locus $\Delta = B-B_0$ of the fibration. 
Now $X$ is the natural ambient space where to define a complex $PL$ lift of $\Xi$, i.e. where the fibres over $d$-dimensional faces in a diagram such as \eqref{diagr_lift}, are the tangent spaces. With suitable assumptions on how $\Xi$ interacts with the discriminant locus, the complex $PL$ lift of $\Xi$ should be a $2p$-dimensional topological submanifold, or a $(d,d)$-cycle. 
On the other hand $\check X$ is the natural ambient space where to define the Lagrangian $PL$ lifts of $\Xi$, where the fibres are the orthogonal spaces to the tangent spaces. We view such lifts as $(d, n-d)$-cycles.
 If we naively consider the complex and Lagrangian PL lifts as actual complex and Lagrangian subvarieties, we could formulate the homological mirror symmetry conjecture by stating that different Lagrangian lifts of the same tropical subvariety should correspond to coherent sheaves supported on the complex lift. 

Although this statement is just a very rough approximation, aspects of this program have already been carried out in the literature to various degrees of precision and depth. For instance Section 6.3 of \cite{diri_branes} discusses this correspondence in the case $X$ is endowed with the semiflat complex structure and the tropical varieties are integral affine submanifolds. In this case there are no singular fibres and the complex and Lagrangian $PL$ lifts are actually complex and Lagrangian smooth submanifolds. In \cite{onHmstoric}, together with Gross we propose an explicit correspondence between certain Lagrangian and complex lifts in the case of toric Calabi-Yau manifolds and we find some evidence that this correspondence should induce an equivalence of derived categories. In this case the underlying tropical subvarieties are just disks with boundary on the discriminant locus. Similar results are proved in \cite{chan:ueda} and \cite{CPU:hms:conifold}. 

A deeper justification for naming the above lifts $(d,d)$-cycles or $(d, n-d)$-cycles comes from so called tropical homology. Indeed Itenberg, Katzarkov, Mikhalkin and Zharkov \cite{trop_hom} consider similar cycles which define certain $(p,q)$-homology groups of a tropical variety and show that the dimensions of these groups match the Hodge numbers of the corresponding complex variety. Similar cycles and homology groups have been defined in \cite{can_coord_Sieb_Rud}. 

One major difficulty in this program is to prove tropical to complex correspondence theorems in the Calabi-Yau setting and for tropical subvarieties of arbitrary dimension. So far such results have been proved for curves or hypersurfaces inside toric varieties \cite{Mikh-tropical}, \cite{mikh_pants}, \cite{NiSi}. One of the goals of the Gross-Siebert program is to prove tropical to complex correspondence theorems in the case of toric degenerations of families of Calabi-Yau varieties via logarithmic geometry (\cite{GrSi_re_aff_cx}, \cite{GS_log_GW}).  Constructing smooth Lagrangian lifts of tropical subvarieties in the Calabi-Yau case should be easier. Indeed together with Casta\~no-Bernard we have constructed Lagrangian fibrations on a large class of symplectic Calabi-Yau manifolds \cite{CB-M}. Therefore it should be possible to construct smooth Lagrangian lifts of tropical subvarieties in the same way as done here, but with an additional analysis of the interactions with the singular fibres (see for instance the constructions in \cite{onHmstoric} or, at a topological level, in \cite{CB-M-Con}). 
\subsection{Structure of the paper} In Section \ref{trop_geom} we give some background on tropical geometry, we introduce Lagrangian coamoebas and define the Lagrangian PL lifts of tropical hypersurfaces. In Section \ref{LagPants} we define the Lagrangian pair of pants and prove many of its properties. In Section \ref{lifting_trop} we prove Theorem \ref{main_thm} for the case of tropical curves in $\R^2$. In Section \ref{generalizations} we discuss various generalizations, such as the construction of different lifts of the same tropical curve, examples of lifts of non-smooth tropical curves and the case of curves in $\R^n$. In Section \ref{intoric} we give various examples of Lagrangian lifts inside toric varieties. In the Appendix we prove the technical result showing that the Hessian of the function used to define the Lagrangian pair of pants is negative definite. 

\subsection{Notation.} \label{notation}Throughout the paper we will use the following notations. Given a set of vectors $u_1, \ldots, u_k$ in a vectors space $V$, the cone generated by these vectors is the set
\[ \cone \{ u_1, \ldots, u_k \} = \left \{ \sum_{j=1}^{k} t_j u_j \, | \, t_j \in \R_{\geq 0} \right \}. \]
Given a subset $A$ of an affine space, we will denote the convex hull of $A$ by
\[ \conv A. \]
Given a subset $W$ of an affine space, the notation 
\[ \inter W \]
stands for the relative interior of $W$. Namely, we consider the smallest affine subspace containing $W$, then $\inter W$ will be the topological interior relative to this affine subspace. This for examples applies to faces of polyhedra or cones. 

\subsection*{Acknowledgments} 
I wish to thank Mark Gross for suggesting this problem, Ricardo Casta\~no-Bernard for many discussions and Grigory Mikhalkin for sharing many of his ideas on tropical to Lagrangian correspondences. I was partially supported by the grant FIRB 2012 ``Moduli spaces and their applications'' and by the national research project ``Geometria delle variet\`a proiettive'' PRIN 2010-11. I am a member of the INDAM research group GNSAGA.

\section{Tropical hypersurfaces and their piecewise linear lifts} \label{trop_geom}
\subsection{The set-up} \label{setup} Let $M \cong \Z^{n+1}$ be a lattice of rank $n+1$ and let $N = \Hom(M, \Z)$ be its dual lattice. We define $M_{\R} := M \otimes_{\Z} \R$ and similarly $N_{\R}$. Since $M_{\R}$ is the dual of $N_{\R}$ the space $M_{\R} \oplus N_{\R}$ has a natural symplectic form
\[ \omega( m \oplus n, m' \oplus n') = \inn{m}{n'} - \inn{m'}{n} \]
where $\inn{\cdot}{\cdot}$ is the duality pairing. We will consider the $n+1$-dimensional torus
\[ T = N_{\R} / N \]
whose cotangent bundle is 
\[ T^*T = M_{\R} \times N_{\R}/N. \]
Then $\omega$ is the standard symplectic form on $T^*T$. The projection 
\[f: T^{*}T \rightarrow M_{\R} \]
 is a Lagrangian torus fibration. The goal of this section is to define tropical hypersurfaces $\Xi$ in $M_{\R}$ and their piecewise linear Lagrangian lifts $\hat \Xi$ (see \eqref{diagr_lift}). 

We will often identify $M_{\R}$ with $\R^{n+1}$ by choosing a basis $\{u_1, \ldots, u_{n+1} \}$ of $M$ and denote the corresponding coordinates in $M_{\R}$ by $x = (x_1, \ldots, x_{n+1})$. Similarly we also identify $N_{\R}$ with $\R^{n+1}$ by choosing a basis $\{ u_1^*, \ldots, u_{n+1}^* \}$ of $N_{\R}$ such that 
\begin{equation} \label{dual_basis}
   \inn{u^*_{j}}{u_k} = \frac{1}{\pi} \delta_{jk} 
\end{equation}
and denote the corresponding coordinates by $y = (y_1, \ldots, y_{n+1})$. In particular $N$ is identified with $\pi \Z^{n+1}$ and thus 
\begin{equation} \label{toro}
T = \R^{n+1}/ \pi \Z^{n+1}.
\end{equation}
We denote by $[y]$ the element of $T$ represented by $y$. 
The symplectic form $\omega$ becomes 
\begin{equation} \label{st_sympl}
       \omega = \frac{1}{\pi}\sum_{i=1}^{n+1} dx_i \wedge dy_i.
\end{equation}
We also have that $T^*T$ is symplectomorphic to $(\C^*)^n$ with the symplectic form 
\[      \omega = \frac{i}{4\pi}\sum_{k=1}^{n+1} \frac {dz_k \wedge d \bar z_k}{|z_k|^2}, \]
via the symplectomorphism $(x_k,y_k) \mapsto z_k = e^{x_k+i2y_k}$.

We will also consider the following set of symplectic automorphisms of $T^*T$.

\begin{defi} \label{automorph} Let $A: N_{\R} \rightarrow N_{\R}$ be an integral linear automorphism, i.e. a linear automorphism such that $A(N) =N$, and let $x_0 \in M_{\R}$ and $[y_0] \in T$ be two points. An affine automorphism of $T$ is a map $\rho: T \rightarrow T$ of the type
\[ \rho([y]) = \left[ y_0+ Ay \right]. \]
We define the affine automorphism of $M_{\R}$ associated to $\rho$ and $x_0$ to be the map 
\[ \rho^*_{x_0} (x)= x_0 + (A^t)^{-1}x, \]
where $A^t$ is the transpose of $A$. The affine (symplectic) automorphism of $T^*T$ associated to $\rho$ and $x_0$ is the map $\hat \rho_{x_0}: T^*T \rightarrow T^*T$ given by
\[ \hat{\rho}_{x_0}(x,[y])= \left(  \rho^*_{x_0}(x), \rho([y]) \right), \]
 Clearly $\hat{\rho}_{x_0}$ takes the fibre $f^{-1}(0)$ to the fibre $f^{-1}(x_0)$. 
\end{defi}

\subsection{Tropical hypersurfaces} \label{trop_hyper}  A subset $P \subset N_{\R}$ is a convex lattice polytope if it is the convex hull of a finite set of points in $N$. A subdivision of $P$ in smaller lattice polytopes $P_1, \ldots, P_k$ is called {\it regular} if there exists a convex piecewise affine function $\nu: P \rightarrow \R$, such that $\nu$ is integral, i.e. $\nu(P \cap N) \subset \Z$, and the $P_i$'s coincide with the domains of linearity of $\nu$. With a slight abuse of notation the pair $(P, \nu)$ will also denote the set of simplices in the decomposition, i.e. all the $P_k$'s and all of their faces of any dimension. Therefore we will write 
$e \in (P, \nu)$ to indicate that $e$ is a simplex in the decomposition. The relation of inclusion among faces will be denoted by 
\[ f \preceq e. \]
We say that the subdivision is {\it unimodal} if all the $P_i$'s are elementary simplices.  

The discrete Legendre transform of $\nu$ is defined to be the following function $\check{\nu}: M_{\R} \rightarrow \R$:
\begin{equation} \label{cknu}
\check{\nu}(m) = \min \{ \inn{v}{m} + \nu(v), \ v \in P \cap N\}. 
\end{equation}

Also $\check \nu$ gives a decomposition of $M_{\R}$ in the convex polyhedra given by the domains of linearity of $\nu$. As above, the pair  $(M_{\R}, \check \nu)$ will also denote the set of all polyhedra in the subdivision and their faces. 

\begin{defi} The {\it tropical hypersurface} associated to the pair $(P, \nu)$ is the subset $\Xi \subset M_{\R}$ given by the points where $\check{\nu}$ fails to be smooth, i.e. it is the union of polyhedra of $(M_{\R}, \check \nu)$ of dimension at most $n$. We say that $\Xi$ is {\it smooth} if the subdivision of $P$ induced by $\nu$ is unimodal.
\end{defi}

\begin{ex} Let $M= \Z^{n+1}$ and let $P$ be the standard simplex (i.e. the convex hull of the origin and the canonical basis of $\Z^{n+1}$). Let $\nu$ be the zero function. Then 
\[ \check{\nu} = \min \{0, x_1, \ldots, x_{n+1} \}. \] 
The corresponding tropical hypersurface is called the standard tropical hyperplane (see next section). 
\end{ex}

\begin{figure}[!ht] 
\begin{center}
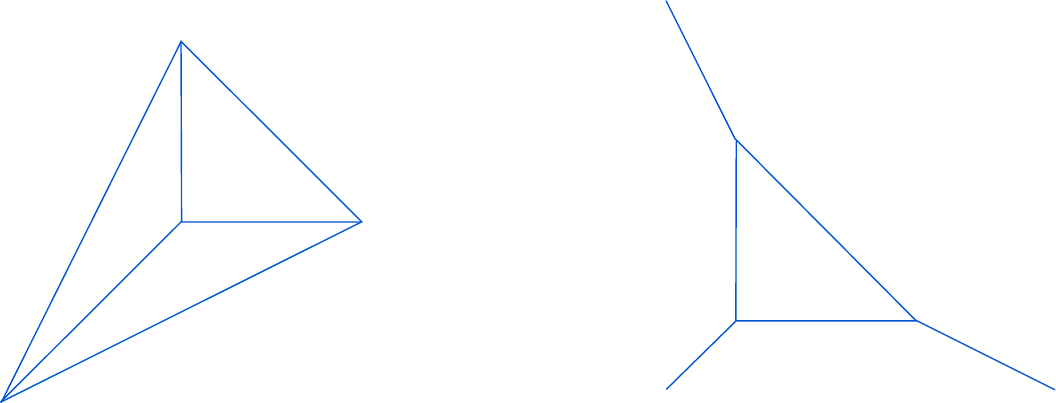
\caption{The polytope $P$ and the tropical curve $\Xi$.} \label{trop_hype}
\end{center}
\end{figure}
\begin{ex} \label{triangle_ex}
Let $P = \conv \{ (0,0), (1,2), (2,1) \}$ with its unique unimodal subdivision (see Figure \ref{trop_hype}). It is induced by a piecewise affine function $\nu$ such that $\nu(0,0) = 1$ and $\nu(2,1) = \nu(1,2) = \nu(1,1) = 0$. Then we have 
\[ \check \nu(x_1, x_2) = \min \{ 1, x_1+x_2 , 2x_1+ x_2, x_1+ 2x_2 \} \]
and the corresponding tropical curve $\Xi$ is as in Figure~\ref{trop_hype}.   
\end{ex}

The subdivision $(M_{\R}, \check \nu)$ is dual to the subdivision $(P, \nu)$. The $n+1$ dimensional polyhedra of $(M_{\R}, \check \nu)$ are the closures of the connected components of the complement of $\Xi$. These are in bijection with the vertices of $(P, \nu)$. Let us denote this bijection by $v \mapsto \check v$ for all $v \in P$. The polyhedron $\check v$ is the region where the minimum in $\eqref{cknu}$ is attained by the affine function $m \mapsto \inn{v}{m} + \nu(v)$. Similarly there is a bijection between $k$-dimensional faces of $(P, \nu)$ and $(n+1)-k$ dimensional faces of $(M_{\R}, \check \nu)$, which we denote by 
\[ e \mapsto \check e. \]
 We have that $\check e$ is the locus where the minimum in \eqref{cknu} is attained by the affine functions corresponding to the vertices of $e$. In particular $n$-dimensional faces of $\Xi$ correspond to edges in $(P, \nu)$. If $v_1$ and $v_2$ are the vertices of $e$, $\check e$ lies in a hyperplane orthogonal to $v_1 - v_2$.

\subsection{The tropical hyperplane} \label{stTrop} Let $\{u_1, \ldots, u_{n+1} \}$ be a basis of $M$ inducing coordinates $x=(x_1, \ldots, x_{n+1})$ on $M_{\R}$. As already mentioned in the previous section, the {\it standard tropical hyperplane} in $M_{\R}$ is the tropical hypersurface $\Gamma$ given by the non-smooth locus of the function $\min \{0, x_1, \ldots, x_{n+1} \}$. It can be described as the union of the following cones. Let 
\begin{equation} \label{u0}
     u_0 = - \sum_{j=1}^{n+1} u_j.
\end{equation}
Given a proper subset $J \subsetneq \{0, \ldots, n+1 \}$ let $|J|$ be its cardinality and let 
\[ \Gamma_{J} = \cone \{ u_j, \, j \in J \}. \] \todo{explain notation cone?}
For convenience let us also define
\[ \Gamma_{\emptyset} = \{ 0 \},\]
which is the vertex of $\Gamma$. We have that
\[ \Gamma = \bigcup_{0 \leq |J| \leq n} \Gamma_J. \]
Recall from toric geometry that the collection of all cones $\Gamma_{J}$ (including those of dimension $n+1$) forms the fan of $\PP^{n+1}$ which we denote here by $\Sigma_{\Gamma}$, i.e. 
\[ \Sigma_{\Gamma} = \{ \Gamma_J \}_{0 \leq |J| \leq n+1}. \]
Thus $\Gamma$ can be viewed as the $n$-skeleton of $\Sigma_{\Gamma}$.


\subsection{Lagrangian coamoebas} \label{stCoam}
Here we define the Lagrangian coamoeba in the torus $T = N_{\R}/N$, which is dual to the tropical hyperplane. Let $\{ u_1^*, \ldots, u_{n+1}^* \}$ be the basis of $N_{\R}$ satisfying \eqref{dual_basis}. Thus the torus $T$ is as in \eqref{toro}. 
Consider the points
\[ p_0 = 0 \quad \text{and} \quad p_k = \frac{\pi}{2}u^*_k, \  \ (k=1, \ldots, n+1). \]  
Denote by $C^+$ the set of points $[y] \in T$ which are represented either by a vertex or by an interior point of the $(n+1)$-dimensional simplex with vertices the points $p_0, \ldots, p_{n+1}$. Let $C^-$ be the image of $C^+$ with respect to the involution $[y] \mapsto [-y]$. 
The {\it (standard) $(n+1)$-dimensional Lagrangian coamoeba} is the set $C = C^+ \cup C^-$ (see Figure \ref{co_am}). Notice that this definition makes sense also when $n=0$, in which case $C = \R/ \pi \Z$. The points $[p_0], \ldots, [p_{n+1}]$ are called the {\it vertices} of $C$. 

For any subset $J  \subsetneq \{0, \ldots, n+1\}$, denote by $E^+_J$ the set of points $[y] \in T$ which are represented either by a vertex or by a point in the relative interior of the $(n+1-|J|)$-dimensional simplex with vertices the points  $\{p_k \}_{k \notin J}$. We let $E_J^-$ be the image of $E^+_J$ via the involution $[y] \mapsto [-y]$. 
We define the {\it $J$-th face} of $C$ to be the set $E_J = E_J^+ \cup E_J^-$.   Clearly $E_J$ is homeomorphic to an $(n+1-|J|)$-dimensional Lagrangian coamoeba.  If $J = \{ j \}$ then we denote $E_J$ by $E_j$ and we call it the {\it $j$-th facet} of $C$. The closures $\bar E_J$ of the $J$-th faces satisfy
\[ \begin{split}
              \bar E_j   & = \left \{ [y] \in \bar C \, | \, \inn{u_j}{y} = 0 \mod  \Z \right \}, \quad \text{when} \ j=1, \ldots, n+1 \\
              \bar E_0  & = \left \{ [y] \in \bar C \, | \, \inn{u_0}{y} = \frac{\pi}{2} \mod \Z \right \}, \\
              \bar E_{J} & = \bigcap_{j \in J} \bar E_j.
   \end{split}
                     \] 
where $u_0$ is the vector defined in \eqref{u0}. For convenience we also define 
\[ E_{\emptyset} = C. \]
Faces of dimension $1$ (i.e. when $|J| = n$) are called edges. Notice that if we denote by $J_k$ the complement of $k$ in $\{0, \ldots, n+1 \}$, then 
\[ p_k = E_{J_k}. \]

\begin{figure}[!ht] 
\begin{center}
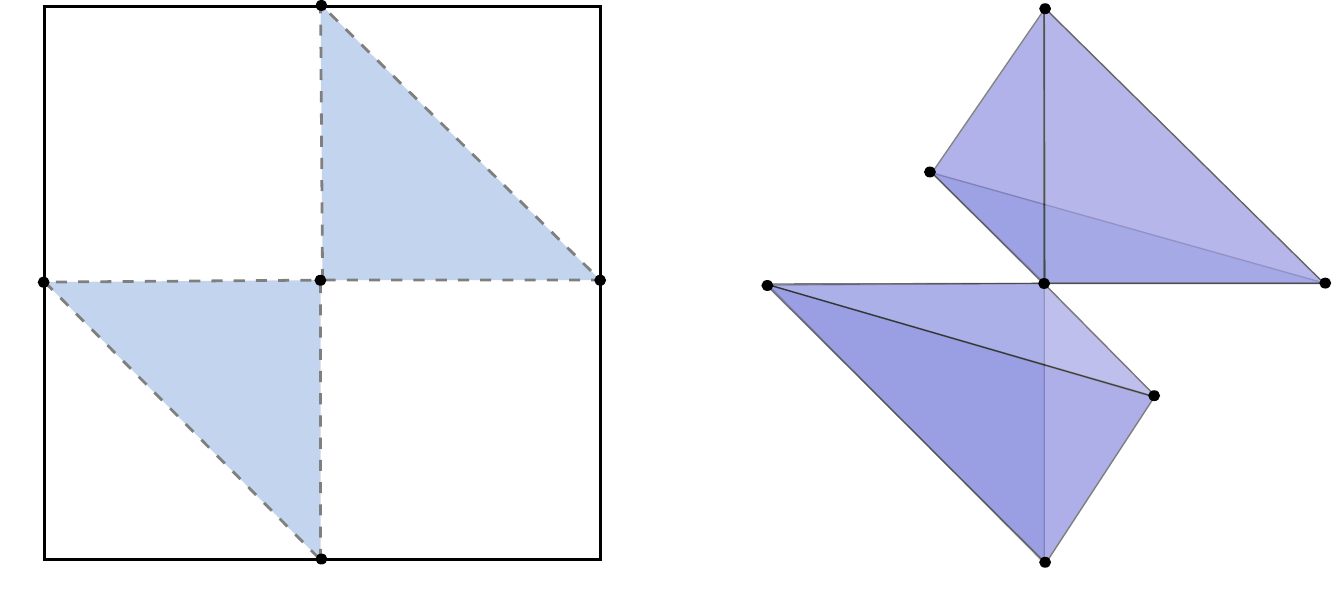
\caption{The $2$ and $3$ dimensional standard coamoebas. Notice that they contain their vertices but they do not contain any of their higher dimensional faces.} \label{co_am}
\end{center}
\end{figure}

There is a one to one inclusion reversing correspondence between faces of $C$ and cones of $\Sigma_{\Gamma}$ where $E_J$ corresponds to $\Gamma_J$. For instance vertices of $C$ correspond to $n+1$-dimensional cones. 

\subsection{The Lagrangian PL-lift of $\Gamma$} \label{plift} For every subset $J \subset \{ 0, \ldots, n+1 \}$ with $0 \leq |J| \leq n$, consider the following $n+1$ dimensional subsets of $M_{\R} \times T$:
\begin{equation} \label{ends_lift}
         \hat{\Gamma}_{J} = \Gamma_J \times E_{J}.
\end{equation}
The piecewise linear lift (or PL-lift) of $\Gamma$ is defined to be 
\begin{equation} \label{plLift}
  \hat \Gamma = \bigcup_{0 \leq |J| \leq n} \hat{\Gamma}_J.
\end{equation}
Notice that when $n=1$, then $\hat \Gamma$ is homeomorphic to $S^2$ with three punctures, i.e. to a pair of pants. Therefore we call $\hat \Gamma$ the PL-pair of pants. 

\subsection{Symmetries of $\Gamma$ and $C$}
\label{symmetries}
For every $k=1, \ldots, n+1$ let $R_k$ be the unique affine automorphism of $T$ which maps $C^+$ to itself, exchanges the vertices $p_0$ and $p_k$ and fixes all other vertices. Define $G$ to be the group generated by the maps $R_k$. We have that $G$ acts on the coamoeba $C$. The elements $R_k$ permute the faces of $C$ according to the following rule. Let $R_k$ act on the set of indices $\{0, \ldots, n+1 \}$ as the transposition which exchanges $0$ and $k$ and extend this action to the set of subsets $J \subseteq \{0, \ldots, n+1 \}$ . \todo{exchanged $R^*$ with $R$, check consistency} Then clearly
\[ R_k E_J = E_{R_kJ}. \]

\medskip

Dually let us define the group acting on $\Gamma$. Let $R^*_k$ be the affine automorphism of $M_{\R}$ associated to $R_k$ and the origin
(see Definition \ref{automorph}). It can be seen that $R^*_k$ maps $\Gamma$ to itself. Indeed if $u_0, \ldots, u_{n+1}$ are the vectors in $M_{\R}$ as in \S \ref{stTrop}, then $R^*_k$ is the unique linear map which exchanges $u_0$ and $u_k$ and fixes all other $u_j$'s.   More explicitly
\[ R^*_k(x) = (x_1 - x_k, \, \ldots, \, x_{k-1} - x_k, \, -x_k, \, x_{k+1}-x_k, \, \ldots, \, x_{n+1}-x_k ).\]
We have that $R^*_k$ permutes  the cones of $\Sigma_{\Gamma}$ according to the rule
\begin{equation} \label{action_cones}
      R^*_k \Gamma_J = \Gamma_{R_k J}.
\end{equation}
Denote by $G^*$ the group generated by the transformations $R^*_k$. Then $G^*$ acts on $\Gamma$. It is easy to see that 
\[ R_k: y \longmapsto (R^*_k)^t(y) + p_k \]
where $(R^*_k)^t$ is the transpose of $R^*_k$. 

We can combine the actions of $G$ and $G^*$ to get an action on the PL-pair of pants $\hat \Gamma$ via the following affine symplectic automorphisms of $T^*T$:
\begin{equation} \label{glob_symm}
 \mathcal R_k(x,y) = (R^*_k x, R_k y).
\end{equation}
Let $\mathcal G$ be the group generated by the $\mathcal R_k$'s. Then $\mathcal G$ acts on $\hat \Gamma$.

\subsection{Lagrangian piecewise linear lifts of tropical hypersurfaces} \label{LagPLift} We assume now that $\Xi$ is a smooth tropical hypersurface in $M_{\R}$ given by a pair $(P, \nu)$ as in \S \ref{trop_hyper} and we define its $PL$-lift $\hat \Xi$ inside $M_{\R} \times N_{\R} / N$. Given a $k$-dimensional face $e \in (P, \nu)$, with $k=1, \ldots, n+1$, let $\check e$ be the dual $(n+1)-k$ dimensional face of $\Xi$. We will use the involution $\iota$ of $M_{\R} \times N_{\R} / N$ given by $\iota: (x, [y]) \mapsto (x, [-y])$.
Define the following subsets of $N_{\R}/N$:
\[ \begin{split}
            & \bar{C}^{+}_{e}  = \{ [y] \in N_{\R} / N \, | \, 2(y-k) \in e \ \text{for some} \ k \in N \},\\
            & \bar{C}^{-}_{e} = \iota \left(  \bar{C}^{+}_{e} \right), \\
            & \bar{C}_{e}  = \bar{C}^{+}_{e} \cup \bar{C}^{-}_{e}.
   \end{split} \]
A point $[y] \in N_{\R} / N$ is a {\it vertex} of $\bar{C}_e$ if $2(y-k)$ is a vertex of $e$ for some $k \in N$. We define $C^+_e$ (resp. $C^{-}_e$ and $C_{e}$) to be the set of points $[y]$ which are either vertices or relative interior points of $\bar{C}^+_e$ (resp. $\bar{C}^{-}_e$ and $\bar{C}_{e}$). Clearly $\bar{C}^+_e$ (resp. $\bar{C}^{-}_e$ and $\bar{C}_{e}$) is the closure of $C^+_e$ (resp. $C^{-}_e$ and $C_{e}$).

Now define the Lagrangian lift of $\check e$ to be 
\[ \hat e = \check e \times C_{e}. \]
Clearly $\hat e$ is $(n+1)$-dimensional, moreover the tangent space of $C_{e}$ is the orthogonal complement of the tangent space to $\check e$, thus the interior of $\hat e$ is a Lagrangian submanifold of $M_{\R} \times N_{\R} / N$. We define the Lagrangian $PL$-lift of $\Xi$ to be 
\[ \hat \Xi = \bigcup_{e} \, \hat e \]
where the union is over all faces in $(P, \nu)$ of dimensions $k=1, \ldots, n+1$. It can be shown that $\hat \Xi$ is an $(n+1)$-dimensional topological submanifold of $M_{\R} \times N_{\R} / N$.

\begin{ex} Let us consider $\Xi$ given in Example \ref{triangle_ex}. Then the edges of $\Xi$ correspond to the edges of $(P, \nu)$. If $\check e$ is an edge of $\Xi$, then $C_{e}$ coincides with $e$ and $\hat e$ is a cylinder. If $\check e$ is a vertex of $\Xi$, then $e$ is one of the two dimensional simplices of $(P, \nu)$. The sets $C_e$ are drawn in Figure \ref{pl_lift}
\begin{figure}[!ht] 
\begin{center}
\includegraphics{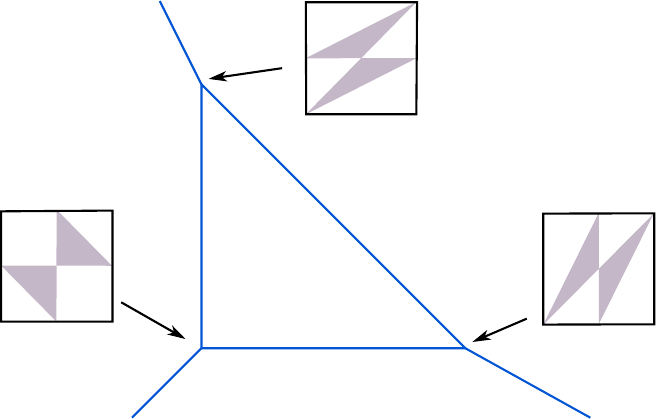}
\caption{} \label{pl_lift}
\end{center}
\end{figure}
\end{ex}
 
Given a $k$-dimensional polyhedron $\check e$ of $\Xi$, define the star-neighborhood of $\check e$ to be the union of the polyhedra of $\Xi$ which contain $\check e$, i.e.
\begin{equation} \label{star_neigh}
    \Xi_{\check e} = \bigcup_{f \preceq e, \ \dim f \geq 1} \ \check f.
\end{equation}
Similarly define its lift
 \[ \hat \Xi_{\check e} = \bigcup_{f \preceq e \ \dim f \geq 1 } \hat f. \]

\section{Lagrangian pairs of pants} \label{LagPants}
In this section we consider the torus $T = N_{\R}/N$ and its cotangent bundle $T^*T = M_{\R} \times T$. We assume that coordinates $y=(y_1, \ldots, y_{n+1})$ are chosen on $T$ so that $T$ is as in \eqref{toro}. The dual coordinates (see \S \ref{setup}) on the cotangent fibre $M_{\R}$ are $x=(x_1, \ldots, x_{n+1})$ 

\subsection{The construction} \label{theconstr}
We construct a smooth Lagrangian lift of the standard tropical hyperplane which we call a Lagrangian pair of pants. It can be considered as a Lagrangian smoothing of the PL-pair of pants $\hat \Gamma$ 


 \begin{defi} \label{blup} The {\it real blow up of the coamoeba} $C$ at its vertices is the smooth manifold $\tilde C$ defined as follows. If $p$ is one of the vertices $p_0, \ldots, p_{n+1}$ of $C$ and $U_p \subset C$ a small neighborhood of $p$ let 
 \[ \tilde U_p = \{ (q, \ell) \in U_p \times \RP^{n} \, | \, q-p \in \ell \}, \]
 where $\ell$ is a line through the origin in $\R^{n+1}$, which we think as a point of $\RP^{n+1}$. The real blow up of $C$ at $p$ is formed by gluing $\tilde U_p$ to $C-p$ via the projection map $\tilde U_p \rightarrow U_p$. We define $\tilde C$ to be the blow up of $C$ at all vertices. We denote by $\pi: \tilde C \rightarrow C$ the natural projection. Clearly we can identify $\tilde C - \cup_{j=0}^{n+1} \pi^{-1}(p_j)$ with $C - \{p_0, \ldots, p_{n+1} \}$ via $\pi$.   Notice that when $E_J$ is a face of $C$ of dimension greater that $1$, then it is also a coamoeba inside a smaller dimensional torus, therefore we can define its blow-up which we denote by $\tilde E_J$. 
 \end{defi}

It is easy to see that when $n=1$, $\tilde C$ is diffeomorphic to $S^2$ with three punctures. Let $G$ be the group acting on $C$ defined in \S \ref{symmetries}. We have the following easy fact

\begin{lem} \label{symmblup} The action of $G$ on $C$ lifts to an action on $\tilde C$.
\end{lem}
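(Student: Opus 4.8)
The plan is to exploit that every element of $G$ is an \emph{affine} automorphism of $T$ that permutes the vertices of $C$, together with the fact that an affine map lifts canonically to the real blow up at the points it permutes. First I would record the structural fact that each generator $R_k$ has the form $[y]\mapsto[y_0+Ay]$ with $A$ integral linear (as in Definition \ref{automorph}) and sends the vertex set $\{[p_0],\dots,[p_{n+1}]\}$ to itself. Since $G$ is generated by the $R_k$, every $g\in G$ is affine, restricts to a homeomorphism $C\to C$, and induces a permutation $\sigma_g$ of the vertices with $g([p_j])=[p_{\sigma_g(j)}]$. Writing $A_g$ for the (constant) linear part of $g$, the composition law for affine maps gives $A_{g\circ h}=A_gA_h$, which is what will make the lift a group action.

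Next I would construct the lift $\tilde g\colon\tilde C\to\tilde C$. Away from the exceptional fibres, $\pi$ identifies $\tilde C-\bigcup_j\pi^{-1}(p_j)$ with $C-\{p_0,\dots,p_{n+1}\}$, and there I set $\tilde g=g$. Over a vertex $p_j$, in the chart $\tilde U_{p_j}$ of Definition \ref{blup} I would set $\tilde g(q,\ell)=(g(q),A_g\ell)$, where $A_g\ell$ is the image line. This is well defined and lands in $\tilde U_{p_{\sigma_g(j)}}$: if $q-p_j\in\ell$ then $g(q)-g(p_j)=A_g(q-p_j)\in A_g\ell$, the translation part cancelling in the difference, and $g(p_j)=p_{\sigma_g(j)}$. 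In blow-up coordinates the map reads $(q,\ell)\mapsto(A_gq+\text{const},\,\PP(A_g)\ell)$, with $\PP(A_g)$ the projective transformation of $\RP^{n}$ induced by $A_g\in\Gl$; this is manifestly smooth and invertible across the exceptional fibre, so $\tilde g$ is a diffeomorphism of the manifold $\tilde C$ covering $g$ through $\pi$.

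Finally I would check that $g\mapsto\tilde g$ is a homomorphism, which yields the desired $G$-action. The maps $\widetilde{g\circ h}$ and $\tilde g\circ\tilde h$ agree on the dense open set $\tilde C-\bigcup_j\pi^{-1}(p_j)$, where both equal $g\circ h$; being continuous maps into the Hausdorff manifold $\tilde C$ they therefore coincide everywhere. Equivalently one can verify the identity directly on charts using $A_{g\circ h}=A_gA_h$ and $\PP(A_gA_h)=\PP(A_g)\PP(A_h)$. Together with $\widetilde{\id}=\id$ this exhibits $g\mapsto\tilde g$ as an action of $G$ on $\tilde C$ lifting the action on $C$. The only genuinely delicate point, and where I would concentrate the verification, is the smoothness of $\tilde g$ across the exceptional fibres: one must confirm that the chart formula of Definition \ref{blup} yields a smooth map even though $g$ merely permutes the blown-up points. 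This is the standard local computation for lifting a diffeomorphism to a blow up, and it is especially transparent here because $g$ is affine, so its linear part is constant and the chart expression $(q,\ell)\mapsto(A_gq+\text{const},\PP(A_g)\ell)$ is smooth by inspection; everything else is formal.
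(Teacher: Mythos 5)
Your proof is correct and complete. The paper actually gives no proof of this lemma at all---it is stated as ``the following easy fact''---and your argument supplies precisely the verification it leaves implicit: every element of $G$ is affine with constant integral linear part and permutes the vertices, so on each chart of Definition \ref{blup} the lift is the restriction of the manifestly smooth product map $(q,\ell)\mapsto\bigl(g(q),\PP(A_g)\ell\bigr)$ to the incidence variety, it agrees with $g$ off the exceptional fibres, and the homomorphism property follows from $A_{g\circ h}=A_gA_h$ (or by continuity and density).
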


The goal is to construct a Lagrangian embedding $\Phi: \tilde C \rightarrow  T^*T$.   We say that an embedding $\Phi$  is a graph over $\tilde C$ if there exists a smooth map $\mathbf{h}: \tilde C \rightarrow M_{\R}$ such that $\Phi$ is of the type
\begin{equation} \label{phi:graph}
  \Phi(q) = (\mathbf{h}(q), \pi(q) ).
\end{equation}

Clearly, $\Phi(\tilde C - \cup_{j=0}^{n+1} \pi^{-1}(p_j))$ is the graph over $C - \{p_0, \ldots, p_{n+1} \}$ of the map $\h$. 
Given a smooth function $F: C - \{p_0, \ldots, p_{n+1} \} \rightarrow \R$, we can construct the graph of the exact one form $dF$, i.e. the graph of the map $\h$ defined by
\[ \h =( F_{y_1}, \ldots, F_{y_{n+1}}), \]
where $F_{y_j}$ denotes the partial derivative of $F$ with respect to $y_j$. In this case the graph is Lagrangian, but in general the map $\h$ does not extend smoothly to $\tilde C$.  We say that $\h$ defines the {\it graph of an exact one form over $\tilde C$} if both $F$ and $\h$ extend smoothly to $\tilde C$ and the map $\Phi$ defined by $\h$ via \eqref{phi:graph} is an embedding. By continuity, $\Phi(\tilde C)$ continues to be Lagrangian. The following is a key example.

\begin{ex} \label{exFloc} Let us study an example of a graph of an exact one form defined on the blowup $\tilde U_p$ of a neighborhood $U_p$ of a vertex $p$. Assume for convenience that $p$ is the point $p_0 = 0$. Let $U_p^{\pm} = U_p \cap C^{\pm}$. Define 
\begin{equation} \label{Floc} 
                              F(y) = \begin{cases} 
                                                \left( \prod_{j=1}^{n+1} y_j \right)^{\frac{1}{n+1}} \quad \text{on} \ U_p^+, \\
                                                \ \\
                                                (-1)^{n} \left( \prod_{j=1}^{n+1} y_j \right)^{\frac{1}{n+1}} \quad \text{on} \ U_p^-.
                                          \end{cases} 
                                                                      \end{equation}
Notice that $F$ is always well defined and odd. Let us prove that $\h=( F_{y_1}, \ldots, F_{y_{n+1}})$ extends to a smooth map on $\tilde U_p$. 
We have 
\[ F_{y_j} =  \frac{F(y)}{(n+1)y_j }.  \]                                                    
Coordinates on $\tilde U_p$ are given by $(\alpha_1, \ldots, \alpha_{n}, t)$, with $t \in \R$ and $\alpha_j >0$, and the map $\pi$ is
\begin{equation} \label{coords_blwup}
        \pi: (\alpha_1, \ldots, \alpha_{n}, t) \mapsto (t\alpha_1, \ldots, t\alpha_{n}, t) 
\end{equation}
In these coordinates, $\h$ lifted to $\tilde U_p$ becomes
\[ \h(t, \alpha_2, \ldots, \alpha_{n+1}) =\left( \frac{\left( \prod \alpha_j \right)^{\frac{1}{n+1}}}{(n+1)\alpha_1}, \, \ldots,  \, \frac{\left( \prod \alpha_j \right)^{\frac{1}{n+1}}}{(n+1)\alpha_{n}}, \, \frac{\left( \prod  \alpha_j \right)^{\frac{1}{n+1}}}{n+1} \right), \]      
which clearly is a smooth function on $\tilde U_p$. Similarly also $F$ extends to a smooth function on $\tilde U_p$. Notice that we have the identity
\[ (n+1)^{n+1} \prod_{k=1}^{n+1} F_{y_k} = 1 \]
thus the image of $\h$ is contained in the hypersurface
\begin{equation} \label{bound_amoeba}
 \mathcal S_0: \quad  (n+1)^{n+1} x_1 \ldots x_{n+1} = 1 \ \text{and} \  x_j > 0, \forall j.
\end{equation}
Indeed $\h$ restricted to $t=0$ gives a diffeomorphism from $\pi^{-1}(p_0)$ to this hypersurface. To see this, identify $\pi^{-1}(p_0)$ with $(\R_{>0})^{n}$ via the coordinates $(\alpha_1, \ldots, \alpha_{n})$. Then an inverse of $\h$ restricted to  $\pi^{-1}(p_0)$ is the map
\[ (x_1, \ldots, x_{n+1}) \mapsto \left( \frac{x_{n+1}}{x_1}, \ldots, \frac{x_{n+1}}{x_n} \right) \]
restricted to $\mathcal S_0$. This implies that $\Phi$ is an embedding of $\tilde U_p$.
\end{ex}

We can now give our global example. Define the following function $F$ on  $C$: 
\begin{equation} \label{Fglob} 
   F(y)=  \begin{cases} 
                                                \left( \cos \left( \sum_{j=1}^{n+1}y_j \right) \prod_{j=1}^{n+1} \sin y_j  \right)^{\frac{1}{n+1}} \quad \text{on} \ C^+, \\
                                                \ \\
                                               (-1)^{n} \left( \cos \left( \sum_{j=1}^{n+1}y_j \right) \prod_{j=1}^{n+1} \sin y_j  \right)^{\frac{1}{n+1}} \quad \text{on} \ C^-.
                                          \end{cases}
\end{equation}
We have that $F$ is well defined on $C$ and vanishes on the boundary of $C$. Moreover $F$ has a similar structure as the function in \eqref{Floc} and in fact it is modeled on it.  Notice that as $y$ approaches $p_0$, the factors $\sin y_j$ are asymptotic to $y_j$, moreover the first factor appearing in the expression of $F$ does not vanish at $p_0$. Thus $F$ vanishes at $p_0$ to the same order as the function defined in \eqref{Floc}.      The reason for the first factor in $F$ comes from requiring that $F$ has the same order of vanishing at all other vertices as well.   Let $G$ be the group acting on $C$ defined in \S \ref{symmetries}, then we have
\begin{lem} \label{Ginv} The function $F$ is $G$ invariant.
\end{lem}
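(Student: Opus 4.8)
The plan is to reduce to the generators: since $G$ is generated by the automorphisms $R_1, \ldots, R_{n+1}$, it suffices to prove $F \circ R_k = F$ for each $k$. First I would make $R_k$ explicit in the $y$-coordinates. From the formula $R_k(y) = (R^*_k)^t(y) + p_k$ of \S\ref{symmetries}, together with the expression for $R^*_k$ given there, a direct transposition shows that $R_k$ fixes every coordinate except the $k$-th, which it replaces by $\tfrac{\pi}{2} - \sum_{l} y_l$; that is,
\[ R_k(y)_j = y_j \ (j \neq k), \qquad R_k(y)_k = \frac{\pi}{2} - \sum_{l=1}^{n+1} y_l. \]
Writing $\Theta(y) = \cos\big(\sum_j y_j\big)\prod_j \sin y_j$ for the common expression inside the root, the definition \eqref{Fglob} reads $F = \Theta^{1/(n+1)}$ on $C^+$ and $F = (-1)^n\,\Theta^{1/(n+1)}$ on $C^-$, where $(\cdot)^{1/(n+1)}$ denotes the real $(n+1)$-th root (which is single-valued on each sheet since $\Theta$ has constant sign there).

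The core of the argument is the identity $\Theta \circ R_k = \Theta$, which I would verify by the two elementary trigonometric relations induced by $R_k$. Since the $k$-th coordinate becomes $\tfrac{\pi}{2} - \sum_l y_l$, one has $\sin\big(R_k(y)_k\big) = \sin\big(\tfrac{\pi}{2} - \sum_l y_l\big) = \cos\big(\sum_l y_l\big)$, so that $\prod_j \sin\big(R_k(y)_j\big) = \cos\big(\sum_l y_l\big)\prod_{j\neq k}\sin y_j$. On the other hand the total sum transforms as $\sum_j R_k(y)_j = \tfrac{\pi}{2} - y_k$, whence $\cos\big(\sum_j R_k(y)_j\big) = \cos\big(\tfrac{\pi}{2} - y_k\big) = \sin y_k$. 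Multiplying the two gives $\Theta(R_k y) = \sin y_k \cdot \cos\big(\sum_l y_l\big)\prod_{j\neq k}\sin y_j = \Theta(y)$, since $\sin y_k \prod_{j\neq k}\sin y_j = \prod_j \sin y_j$.

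It remains to check that $R_k$ preserves the two sheets $C^+$ and $C^-$ separately, so that the constant prefactor of $\Theta^{1/(n+1)}$ is the same at $y$ and at $R_k(y)$. On $C^+$, described by $y_j > 0$ and $\sum_j y_j < \tfrac{\pi}{2}$, the formula above gives $R_k(y)_j = y_j > 0$ for $j \neq k$, $R_k(y)_k = \tfrac{\pi}{2} - \sum_l y_l > 0$, and $\sum_j R_k(y)_j = \tfrac{\pi}{2} - y_k < \tfrac{\pi}{2}$, so $R_k(C^+) = C^+$. For $C^-$, represented by $y_j \in (-\tfrac{\pi}{2},0)$ with $\sum_l y_l > -\tfrac{\pi}{2}$, the $k$-th coordinate lands in $(\tfrac{\pi}{2},\pi)$; subtracting $\pi$ (the period in \eqref{toro}) gives the representative $\tilde y_k = -\tfrac{\pi}{2} - \sum_l y_l \in (-\tfrac{\pi}{2},0)$, and a short check yields $\sum_l \tilde y_l = -\tfrac{\pi}{2} - y_k \in (-\tfrac{\pi}{2},0)$, so $R_k(C^-) = C^-$. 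Combining this with $\Theta\circ R_k = \Theta$ gives $F\circ R_k = F$ on the interiors of $C^+$ and $C^-$; since $F$ vanishes at the vertices and $R_k$ merely permutes them, and $F$ is continuous, the equality extends to all of $C$. The only delicate point is this last sheet-preservation step for $C^-$, where the reduction modulo $\pi$ must be carried out carefully to confirm that $R_k$ does not interchange $C^+$ and $C^-$ (which would spoil the matching of the $(-1)^n$ factor); everything else is a routine trigonometric computation.
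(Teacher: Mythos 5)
The paper states Lemma \ref{Ginv} without proof, and your argument correctly supplies the missing computation: the explicit form of $R_k$ (fixing $y_j$ for $j\neq k$ and sending $y_k\mapsto \tfrac{\pi}{2}-\sum_l y_l$, derived from $R_k=(R^*_k)^t+p_k$), the trigonometric identity $\Theta\circ R_k=\Theta$ for $\Theta(y)=\cos\bigl(\sum_j y_j\bigr)\prod_j\sin y_j$, and the check that $R_k$ preserves each sheet $C^{\pm}$ so that the $(-1)^n$ prefactor matches are all correct. You also rightly flagged the only delicate point, namely $R_k(C^-)=C^-$ after reduction mod $\pi$; equivalently, one can observe that $R_k$ commutes with the involution $\iota([y])=[-y]$ on $T$ because $2p_k\in N$, which gives $R_k(C^-)=R_k(\iota(C^+))=\iota(R_k(C^+))=C^-$ at once.
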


 These facts guarantee the following. 

\begin{lem} \label{smoothExt} Let $F: C \rightarrow \R$ be defined as in \eqref{Fglob}. Then $\h= (F_{y_1}, \ldots, F_{y_{n+1}})$ extends smoothly to a map on $\tilde C$ and the map $\Phi$ defined in \eqref{phi:graph} is a Lagrangian embedding of $\tilde C$. 
\end{lem}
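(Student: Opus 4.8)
The plan is to reduce everything to the behaviour near the single vertex $p_0=0$ and then quote the local model of Example \ref{exFloc}. Away from the vertices $C$ consists of the open top-dimensional simplices, where $\cos(\sum_j y_j)\prod_j\sin y_j>0$; there $F$ is smooth, $\Phi$ is literally the graph of the exact one-form $dF$, and $\pi$ is injective, so on $\tilde C\setminus\bigcup_j\pi^{-1}(p_j)$ both the smoothness of $\h$ and the embedding property are immediate and the image is Lagrangian. The only points to analyse are thus the exceptional fibres $\pi^{-1}(p_j)$. Since $F$ is $G$-invariant (Lemma \ref{Ginv}) and the $G$-action lifts to $\tilde C$ (Lemma \ref{symmblup}), while $R_k$ carries $p_0$ to $p_k$, the map $\Phi$ intertwines $R_k$ on $\tilde C$ with $\mathcal R_k$ on $T^*T$; this equivariance transports every statement at $p_k$ back to $p_0$, so it suffices to treat $p_0$.

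Near $p_0$ I would use the blow-up chart $(\alpha_1,\dots,\alpha_n,t)$ with $\pi(\alpha,t)=(t\alpha_1,\dots,t\alpha_n,t)$, now allowing $t\in(-\varepsilon,\varepsilon)$: the locus $t>0$ parametrises $C^+$, the locus $t<0$ parametrises $C^-$, and $t=0$ is the exceptional divisor. Writing $\sin(t\alpha_j)=t\alpha_j\,\sigma(t\alpha_j)$ with $\sigma(s)=\sin(s)/s$ smooth and positive near $0$, the quantity under the root factors as $\cos(\sum_j y_j)\prod_j\sin y_j=t^{n+1}H(\alpha,t)$ with $H$ smooth and $H(\alpha,0)=\prod_{j=1}^n\alpha_j>0$, so $F=t\,H^{1/(n+1)}$ extends smoothly through $t=0$. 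The crucial check is that the two branches glue: for $t<0$ the real $(n+1)$-st root together with the prefactor $(-1)^n$ combine to give exactly $t\,H^{1/(n+1)}$ again, for both parities of $n$, which is precisely the purpose of the factor $(-1)^n$ in \eqref{Fglob}. Differentiating, the identity $F_{y_j}=\frac{F}{n+1}\bigl(\cot y_j-\tan(\sum_k y_k)\bigr)$ combined with $\cot(t\alpha_j)=t^{-1}c_j(\alpha,t)$ ($c_j$ smooth) yields $\h_j=\frac{H^{1/(n+1)}}{n+1}\bigl(c_j-t^2 d\bigr)$, manifestly smooth across $t=0$.

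To see that $\Phi$ is an embedding I would first restrict to $t=0$, recovering $\h_j|_{t=0}=\frac{(\prod_k\alpha_k)^{1/(n+1)}}{(n+1)\alpha_j}$, which is the map of Example \ref{exFloc}, a diffeomorphism of $\pi^{-1}(p_0)$ onto the hypersurface $\mathcal S_0$. Then I compute $d\Phi$ along the fibre: the $\alpha$-directions give $(d_\alpha\h,0)$, injective because $\h|_{\pi^{-1}(p_0)}$ is an immersion, while the $t$-direction gives $(\ast,(\alpha_1,\dots,\alpha_n,1))$ with nonzero second component, hence independent of the former; so $d\Phi$ has full rank and $\Phi$ is an immersion, which with the injectivity from Example \ref{exFloc} makes it a local embedding near $p_0$, and by equivariance near every $p_k$. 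Global injectivity follows because $\Phi=(\h,\pi)$ and $\pi$ already separates points lying over distinct points of $C$, while over each vertex $\h$ is injective on the fibre; a globally injective local embedding is then an embedding.

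The Lagrangian property is automatic at the end: on the dense open set $\tilde C\setminus\bigcup_j\pi^{-1}(p_j)$ the image is the graph of $dF$, a closed one-form, hence Lagrangian, and by continuity (the Lagrangian Grassmannian is closed) the whole of $\Phi(\tilde C)$ is Lagrangian. I expect the main obstacle to be the smooth gluing across the exceptional divisor in the second paragraph: one must verify not merely that $F$ and $\h$ have the correct leading order, but that the $C^+$ ($t>0$) and $C^-$ ($t<0$) branches assemble into a \emph{single} smooth function of $(\alpha,t)$, which is exactly where the normalising factor $(-1)^n$ and the choice of real root in \eqref{Fglob} are forced and where the $\sin(s)/s$ and $\cot$ factorisations do the work.
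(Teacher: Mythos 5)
Your proposal follows essentially the same route as the paper's proof: reduce to the vertex $p_0$ by $G$-equivariance, pass to the blow-up chart $(\alpha,t)$, factor out the powers of $t$ with the $(-1)^n$ sign check that glues the $C^+$ and $C^-$ branches (this is exactly the paper's computation, and your expression $F = t\,H^{1/(n+1)}$ is the paper's Remark \ref{Fsmooth}), and then identify $\h|_{t=0}$ with the model of Example \ref{exFloc} as a diffeomorphism onto $\mathcal S_0$. Your logarithmic-derivative shortcut $F_{y_j}=\frac{F}{n+1}\bigl(\cot y_j-\tan\bigl(\sum_k y_k\bigr)\bigr)$ is a clean repackaging of the paper's direct manipulation of \eqref{fy}, and your rank computation for $d\Phi$ along the exceptional fibre and the injectivity discussion only make explicit what the paper leaves implicit.
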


\begin{proof} Let us prove that $\h$ extends smoothly to the blow up $\tilde U_p$ of a neighborhood $U_p$ of the vertex $p_0$. Up to a sign, which we ignore for the moment, we have that 
\begin{equation} \label{fy}
     F_{y_j} = \frac{\cos \left( 2y_j + \sum_{k \neq j} y_k \right) \prod_{k \neq j} \sin y_k}{(n+1)\left( \cos \left(\sum_{k=1}^{n+1}y_k \right) \prod_{k=1}^{n+1} \sin y_k \right)^{\frac{n}{n+1}}} 
\end{equation}
The first factors in the numerator and denominator do not vanish at $p_0$ so we can ignore them. We treat the remaining factors by passing to the coordinates $(\alpha_1, \ldots, \alpha_{n}, t)$ on $\tilde U_p$ as in Example \ref{exFloc}. We have that if $j\neq n+1$, then
\[ \begin{split} 
  \frac{ \prod_{k \neq j} \sin y_k }{\left( \prod_{k=1}^{n+1} \sin y_k \right)^{\frac{n}{n+1}}} & =  \frac{ \sin t \,  \prod_{k \neq j} \sin (t  \alpha_k) }{\left( \sin t \, \prod  \sin (t \alpha_k) \right)^{\frac{n}{n+1}}} = \\
  \ & \ \\
  \ &= \pm \,  \frac{  \frac{\sin t}{t} \, \prod_{k \neq j} \frac{\sin (t \alpha_k)}{t} }{\left( \frac{\sin t}{t} \, \prod \frac{\sin (t \alpha_k)}{t}\right)^{\frac{n}{n+1}}},
  \end{split} \]
where the sign in the last equality is determined by the expression $\frac{t^n}{(t^{n+1})^{\frac{n}{n+1}}}$, which is $(-1)^n$ when $t<0$ and $1$ when $t>0$. Of course the functions $\frac{\sin (t\alpha_k)}{t}$ extend smoothly across $t=0$ to a non zero value. Thus $F_{y_j}$ extends smoothly to $\tilde U_p$.  The factor $(-1)^n$ in \eqref{Fglob} guarantees that the expressions for $F_{y_j}$ on $U^{+}_p$ and $U^-_{p}$ extend smoothly to the same function (as in Example \ref{exFloc}). Similarly we have that when $j=n+1$
\[
  \frac{ \prod_{k \neq n+1} \sin y_k }{\left( \prod_{k=1}^{n+1} \sin y_k \right)^{\frac{n}{n+1}}} = \pm \,  \frac{ \prod \frac{\sin (t \alpha_k)}{t} }{\left( \frac{\sin t}{t} \, \prod \frac{\sin (t \alpha_k)}{t}\right)^{\frac{n}{n+1}}},
  \]
which is smooth.
 We also have 
\begin{equation} \label{ht0}
\h|_{t=0} =\left(  \, \frac{\left( \prod \alpha_j \right)^{\frac{1}{n+1}}}{(n+1)\alpha_1}, \, \ldots,  \, \frac{\left( \prod \alpha_j \right)^{\frac{1}{n+1}}}{(n+1)\alpha_{n}}, \,  \frac{\left( \prod  \alpha_j \right)^{\frac{1}{n+1}}}{n+1}\right),
\end{equation}
which is the same function as in Example \ref{exFloc}. Thus $\h$ maps the set $\pi^{-1}(p)$ diffeomorphically onto the hypersurface $\mathcal S_0$ defined in \eqref{bound_amoeba}. 

By the symmetries of $F$ given in Lemma \ref{Ginv} the behavior of $F$ at the other vertices $p_k$ is the same as in $p_0$. This completes the proof of the Lemma.
\end{proof}

\begin{rem} \label{Fsmooth}
Observe that the function \eqref{Fglob} is smooth when lifted to $\tilde C$. In fact, if we write $F$ in the above coordinates on $\tilde U_{p_o}$
\begin{equation*}
 F=  t \left( \cos \left( t \left( 1+ \sum \alpha_j \right) \right) \frac{\sin t}{t}\prod \frac{\sin (t  \alpha_j)}{t} \right)^{\frac{1}{n+1}}.                                       
\end{equation*}
which is smooth.
\end{rem}
 
\begin{defi} \label{LpPants}  We call the submanifold $L=\Phi(\tilde C)$ the {\it standard $(n+1)$-di\-men\-sio\-nal Lagrangian pair of pants}. Given $\lambda >0$, let $\Phi_{\lambda}$ be the embedding constructed from $\h_{\lambda} = (\lambda F_{y_1}, \ldots, \lambda F_{y_{n+1}})$ via \eqref{phi:graph}. Then,  if $\lambda \neq 1$, we call $\Phi_{\lambda}(\tilde C)$ a {\it rescaled Lagrangian pair of pants}
\end{defi}

We have the following consequence of Lemmas \ref{symmblup} and \ref{Ginv}
\begin{lem} \label{eq_action} Given a transformation $R_k$ as in \S \ref{symmetries}, the map $\h: \tilde C \rightarrow M_{\R}$ defined using $F$ satisfies
\[ \h(R_k(y)) = R^*_k \h(y). \]
In particular the group $\mathcal G$ acts on an $(n+1)$-dimensional Lagrangian pair of pants.
\end{lem}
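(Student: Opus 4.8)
The identity to establish is that the gradient $\h = \nabla F$ intertwines the dual pair of involutions $R_k$ on $N_{\R}$ and $R^*_k$ on $M_{\R}$. The plan is to obtain it by differentiating the invariance relation $F \circ R_k = F$ supplied by Lemma \ref{Ginv}. First I would work on the dense open set $C - \{p_0, \ldots, p_{n+1}\}$, where $\h$ is the honest gradient of the smooth function $F$ and no blow-up is yet involved.

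Recall from \S \ref{symmetries} that $R_k$ is the affine map $y \mapsto (R^*_k)^t y + p_k$, whose linear part is $A := (R^*_k)^t$. Differentiating $F\big((R^*_k)^t y + p_k\big) = F(y)$ with the chain rule (the translation $p_k$ drops out of the Jacobian) gives $A^t\, \h(R_k(y)) = \h(y)$, that is $R^*_k\, \h(R_k(y)) = \h(y)$ since $A^t = R^*_k$. Because $R^*_k$ is the unique linear involution exchanging $u_0$ and $u_k$ and fixing the remaining $u_j$, it satisfies $(R^*_k)^2 = \id$; applying $R^*_k$ to both sides therefore yields $\h(R_k(y)) = R^*_k\, \h(y)$, as claimed. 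This identity, proven on $C$ minus its vertices, then extends over the blow-up $\tilde C$ by continuity: by Lemma \ref{smoothExt} the map $\h$ extends smoothly to $\tilde C$, and by Lemma \ref{symmblup} the automorphism $R_k$ lifts to a diffeomorphism $\tilde R_k$ of $\tilde C$ covering $R_k$ (so $\pi \circ \tilde R_k = R_k \circ \pi$), whence both sides of the equation are continuous functions on $\tilde C$ agreeing on a dense subset.

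For the final assertion I would show that each generator $\mathcal R_k$ of $\mathcal G$ preserves $L = \Phi(\tilde C)$. Writing $\Phi(q) = (\h(q), \pi(q))$ as in \eqref{phi:graph} and $\mathcal R_k(x,y) = (R^*_k x, R_k y)$ as in \eqref{glob_symm}, the equivariance just proven together with $\pi \circ \tilde R_k = R_k \circ \pi$ gives
\[ \mathcal R_k(\Phi(q)) = \big(R^*_k\,\h(q),\, R_k\,\pi(q)\big) = \big(\h(\tilde R_k q),\, \pi(\tilde R_k q)\big) = \Phi(\tilde R_k q), \]
so $\mathcal R_k \circ \Phi = \Phi \circ \tilde R_k$. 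Since $\tilde R_k$ is a diffeomorphism of $\tilde C$, it follows that $\mathcal R_k(L) = \Phi(\tilde R_k(\tilde C)) = \Phi(\tilde C) = L$, and hence the whole group $\mathcal G$ acts on $L$. The computation is essentially routine; the only points requiring care are keeping the transpose straight in the chain rule and invoking the involutivity $(R^*_k)^2 = \id$ to pass from $R^*_k\,\h(R_k(y)) = \h(y)$ to the stated form, together with the (already available) smooth extension to the blow-up, so that the fibre $\pi^{-1}(p_j)$ over each vertex is covered by continuity rather than by a separate computation.
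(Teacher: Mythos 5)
Your proof is correct and follows exactly the route the paper intends: the paper states Lemma \ref{eq_action} as an immediate consequence of Lemmas \ref{symmblup} and \ref{Ginv}, and your argument—differentiating the invariance $F \circ R_k = F$ via the chain rule, using that the linear part of $R_k$ is $(R^*_k)^t$ with $(R^*_k)^2 = \id$, then extending over $\tilde C$ by continuity and deducing $\mathcal R_k \circ \Phi = \Phi \circ \tilde R_k$—is precisely the omitted verification. No discrepancies with the paper's approach.
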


We have one last symmetry. Consider the involution of the torus $\iota: [y] \mapsto [-y]$. Clearly $\iota$ maps $C$ to itself and exchanges $C^+$ with $C^-$. 

\begin{lem} \label{invo}The function $F$ satisfies $F(\iota(y)) = -F(y)$ and $\h$ satisfies $\h(\iota(y)) = \h(y)$. Therefore $\iota$ acts on a Lagrangian pair of pants. 
\end{lem}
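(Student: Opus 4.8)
The heart of this statement is the first identity $F(\iota(y)) = -F(y)$; once it is in hand the rest follows formally. The plan is to prove it by a direct sign analysis of the formula \eqref{Fglob}, tracking the parity of $n+1$ in the real $(n+1)$-th root. On the interior of $C^+$ one has $y_j \in (0,\tfrac{\pi}{2})$ and $\sum_j y_j \in (0,\tfrac{\pi}{2})$, so $\sin y_j > 0$ and $\cos(\sum_j y_j) > 0$; thus $F$ is the positive real root and $F > 0$. For a representative $w=-y$ of a point of $C^-$ the sines become negative, so $\cos(\sum_k w_k)\prod_k \sin w_k$ has sign $(-1)^{n+1}$. Extracting the real $(n+1)$-th root and multiplying by the prefactor $(-1)^n$, I would check that for \emph{both} parities of $n+1$ the two cases collapse to the single expression
\[ F(w) = -\Big( \cos\big(\textstyle\sum_k w_k\big)\,\textstyle\prod_k |\sin w_k| \Big)^{\frac{1}{n+1}} \quad \text{on } C^-, \]
the role of $(-1)^n$ being precisely to force $F$ to be negative on $C^-$. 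Since $|\sin(-y_k)| = \sin y_k$ and $\cos$ is even, evaluating at $w=\iota(y)=-y$ for $y \in C^+$ gives $F(\iota(y)) = -F(y)$, and applying $\iota^2 = \id$ extends the identity to all of $C$.

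Granting $F\circ\iota = -F$, the second identity is immediate by differentiation. As $\iota$ is the linear map $y\mapsto -y$ on the universal cover, differentiating $F(-y) = -F(y)$ with respect to $y_j$ and using the chain rule gives $-F_{y_j}(-y) = -F_{y_j}(y)$, that is $F_{y_j}(\iota(y)) = F_{y_j}(y)$ for every $j$. Since $\h = (F_{y_1},\ldots,F_{y_{n+1}})$, this is exactly $\h(\iota(y)) = \h(y)$, valid on $C\setminus\{p_0,\ldots,p_{n+1}\}$ and hence, by the smooth extension established in Lemma \ref{smoothExt}, on all of $\tilde C$.

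For the final assertion I would first observe that $\iota$ fixes each vertex of $C$: in $T = \R^{n+1}/\pi\Z^{n+1}$ one has $\iota([p_0]) = [0] = [p_0]$ and $\iota([p_k]) = [-\tfrac{\pi}{2}u_k^*] = [\tfrac{\pi}{2}u_k^*] = [p_k]$. Hence, exactly as in Lemma \ref{symmblup}, $\iota$ lifts to an involution $\tilde\iota$ of the blow up $\tilde C$ with $\pi\circ\tilde\iota = \iota\circ\pi$. Writing $\hat\iota$ for the involution $(x,[y])\mapsto(x,[-y])$ of $T^*T = M_{\R}\times N_{\R}/N$, and using $\Phi(q) = (\h(q),\pi(q))$ together with $\h\circ\tilde\iota = \h$ and $\pi\circ\tilde\iota = \iota\circ\pi$, I would verify the intertwining
\[ \hat\iota\big(\Phi(q)\big) = \big(\h(q), \iota(\pi(q))\big) = \big(\h(\tilde\iota(q)), \pi(\tilde\iota(q))\big) = \Phi\big(\tilde\iota(q)\big). \]
Since $\tilde\iota$ is a bijection of $\tilde C$, this yields $\hat\iota(L) = \Phi(\tilde\iota(\tilde C)) = \Phi(\tilde C) = L$, so $\hat\iota$ restricts to an involution of the Lagrangian pair of pants $L$. (Note that $\hat\iota$ is anti-symplectic for $\omega = \tfrac{1}{\pi}\sum_i dx_i\wedge dy_i$, but it nonetheless carries the Lagrangian $L$ onto itself.)

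The only point requiring genuine care is the sign bookkeeping in the first step: one must see how the real $(n+1)$-th root interacts with the prefactor $(-1)^n$ across the two parities of $n+1$, which is exactly the feature that makes the single formula \eqref{Fglob} odd under $\iota$. Everything after that — the differentiation giving $\h\circ\iota = \h$, and the lift to the blow up giving the invariance of $L$ — is formal.
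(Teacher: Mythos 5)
Your proof is correct, and it is exactly the argument the paper intends: the paper states Lemma \ref{invo} without proof, treating the oddness of $F$ under $\iota$ as immediate from the two-case formula \eqref{Fglob}, with $\h\circ\iota=\h$ then following by differentiation and the invariance of $L$ by the intertwining $\hat\iota\circ\Phi=\Phi\circ\tilde\iota$. Your sign bookkeeping across the two parities of $n+1$ (including the observation that $\hat\iota$ is anti-symplectic but still preserves $L$ as a set) correctly fills in the details the paper leaves implicit.
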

\subsection{The image of $\h$} \label{Imh} Define the following subsets:
\[ \begin{split} 
           \mathcal H_{0} &= \left \{ (x_1, \ldots, x_{n+1}) \, | \, x_j \geq 0 \ \text{and} \ x_1 x_2 \ldots x_{n+1} \leq  \frac{1}{(n+1)^{n+1}} \right \}, \\
           \mathcal H_{k} &= R^*_{k} \mathcal H_{0}.
   \end{split}        
           \]
Recall that we defined $J_{k}$ to be the complement of $k$ in $\{0, \ldots, n+1 \}$ (see \S \ref{stCoam}). Then $\mathcal H_0 \subset \Gamma_{J_0}$ and $\mathcal H_{k} \subset \Gamma_{J_k}$, see \eqref{action_cones}. An extended description of $\mathcal H_{k}$ is
\[ \mathcal H_{k} = \left \{ t_k u_0 + \sum_{l \neq k} t_l u_l \, | \, t_l \geq 0 \ \forall l \ \text{and} \ t_1t_2 \ldots t_{n+1} \leq  \frac{1}{(n+1)^{n+1}} \right\}. \]
Let 
\begin{equation} \label{the amoeba}
        \mathcal H = \bigcup_{l=0}^{n+1} \mathcal H_l.
\end{equation}
If $\mathcal S_0$ is the hypersurface defined in \eqref{bound_amoeba} let 
\begin{equation} \label{bound_amoeba_k}
   \mathcal S_k = R^*_{k} \mathcal S_{0}.
\end{equation}
Then the boundary of $\mathcal H$ is 
\[ \partial \mathcal H = \bigcup_{l=0}^{n+1} \mathcal S_l. \]
In this section we will prove the following
\begin{prop} \label{imh} Assume $n=1$ or $2$. The image of $\h: \tilde C \rightarrow M_{\R}$ is $\mathcal H$. Moreover $\h$ defines a diffeomorphism between $\inter C^+$ and $\inter \mathcal H$. 
\end{prop}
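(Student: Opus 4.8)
The plan is to analyze $\h$ on the closure $\bar C^+$, show that its image is exactly $\M H_0$, and then use the equivariance of $\h$ under the group $\mathcal G$ (Lemma \ref{eq_action}) together with the $\iota$-symmetry (Lemma \ref{invo}) to recover the full image as $\mathcal H = \bigcup_{l} \M H_l$. First I would record the key structural facts about $F$ on $C^+$: on the interior $\inter C^+$ all factors $\cos(\sum y_j)$ and $\sin y_j$ are strictly positive, so $F>0$ there, while $F$ vanishes exactly on the boundary faces $\bar E_j$ (where some $\sin y_j =0$, hence $x_j = F_{y_j} \to +\infty$ relative to the others, pushing the image towards the walls of the cone) and on $\bar E_0$ (where $\cos(\sum y_j)=0$). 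The product identity that makes the computation tractable is
\[ (n+1)^{n+1}\prod_{k=1}^{n+1} F_{y_k} = \frac{\prod_{j}\cos\!\big(2y_j+\sum_{k\neq j}y_k\big)}{\Big(\cos\big(\sum_k y_k\big)\Big)^{n}\big(\prod_k \sin y_k\big)^{0}} \cdot (\text{correction}), \]
which on $\inter C^+$ is strictly less than $1$ and tends to $1$ precisely as $y \to p_0$ (matching the boundary hypersurface $\mathcal S_0$ of \eqref{bound_amoeba}, cf. Example \ref{exFloc}) and to $0$ as $y$ approaches the other walls. This identifies the two pieces of $\partial \M H_0$: the hypersurface $\mathcal S_0$ (the boundary facet coming from the vertex $p_0$, already analyzed in the proof of Lemma \ref{smoothExt}) and the coordinate walls $x_j = 0$.

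The heart of the argument is to show that $\h$ maps $\inter C^+$ \emph{diffeomorphically} onto $\inter \M H_0$. Here I would restrict to $n=1,2$ and argue as follows. The map $\h$ is smooth and Lagrangian (so it is an immersion wherever its differential is nondegenerate); the nondegeneracy of $d\h$ is equivalent to the Hessian of $F$ being nondegenerate, and by the Appendix result cited in the paper's structure section the Hessian of $F$ is negative definite on $\inter C^+$. Hence $\h|_{\inter C^+}$ is a local diffeomorphism onto an open subset of $M_{\R}$. To upgrade this to a global diffeomorphism onto $\inter \M H_0$ I would use a properness/boundary-behavior argument: as $y$ ranges over $\inter C^+$ and approaches any boundary face of $C^+$, the image $\h(y)$ approaches the corresponding boundary piece of $\M H_0$ described above (the product $\to 1$ limit gives $\mathcal S_0$, a $\sin y_j \to 0$ limit gives $x_j \to 0$ after suitable rescaling, captured cleanly in the blow-up coordinates of Example \ref{exFloc}). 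Combined with the boundary diffeomorphism $\h|_{\pi^{-1}(p_0)}:\pi^{-1}(p_0)\to \mathcal S_0$ already established, this shows $\h$ extends to a continuous bijection $\bar C^+ \to \M H_0$ which is a diffeomorphism on the interior; a degree or invariance-of-domain argument then rules out folding and gives surjectivity onto all of $\inter \M H_0$.

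Finally I would assemble the global statement. By Lemma \ref{eq_action}, $\h(R_k y) = R^*_k \h(y)$, and since $R_k$ maps $C^+$ onto the simplex-half adjacent to the vertex $p_k$ while $R^*_k \M H_0 = \M H_k$ by definition, the image of $\h$ over each such region is $\M H_k$. The involution $\iota$, under which $\h$ is invariant (Lemma \ref{invo}), identifies the images over $C^+$ and $C^-$, so no new components arise from the $C^-$ half. Since the blow-up $\tilde C$ is covered by $\inter C^+$, $\inter C^-$, the exceptional fibers $\pi^{-1}(p_k)$ (mapping onto the $\mathcal S_k$), and the lower faces $E_J$ (mapping into the intersections $\M H_k \cap \M H_l$ along the cone boundaries), taking the union over all these pieces yields $\h(\tilde C) = \bigcup_{l=0}^{n+1} \M H_l = \mathcal H$.

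\textbf{Main obstacle.} The step I expect to be hardest is proving global injectivity of $\h$ on $\inter C^+$ and the precise matching of boundary behavior — i.e. that $\h$ is not merely a local diffeomorphism but a genuine diffeomorphism onto $\inter \M H_0$ with the claimed boundary limits. Negative-definiteness of the Hessian (the Appendix) gives local invertibility and is exactly why the restriction to $n=1,2$ appears, but promoting this to a global bijection requires a careful properness argument controlling the limits of $\h$ along every face of $C^+$ in the blow-up coordinates, which is where the analytic bookkeeping of the $\sin$ and $\cos$ factors becomes delicate.
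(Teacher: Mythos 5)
There is a genuine gap, and it is structural: your central claim that $\h$ maps $\inter C^+$ diffeomorphically onto $\inter \mathcal H_0$ is false, and it contradicts the very statement you are proving, which asserts that $\h(\inter C^+)$ is all of $\inter \mathcal H = \inter \bigl( \bigcup_{l} \mathcal H_l \bigr)$. The symmetry step you rely on to recover the missing pieces cannot repair this: by Definition (see \S \ref{symmetries}), $R_k$ is an affine automorphism of $T$ that maps $C^+$ \emph{to itself}, exchanging the vertices $p_0$ and $p_k$; there is no ``simplex-half adjacent to the vertex $p_k$'' other than $C^+$ itself, since $C$ has only the two halves $C^{\pm}$. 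In fact the equivariance $\h \circ R_k = R^*_k \circ \h$ of Lemma \ref{eq_action} forces $\h(\inter C^+)$ to be $R^*_k$-invariant, and $\mathcal H_0$ is not $R^*_k$-invariant --- so your own symmetry argument refutes your main claim. The correct local picture, which the paper establishes in Lemma \ref{hvertexnbd}, is that the \emph{star neighborhood} $\mathcal W_{J_k}$ of the vertex $p_k$ (a cell of the barycentric subdivision of $C$, not a group translate of $C^+$) maps into $\mathcal H_k$; it is these neighborhoods that realize the decomposition $\mathcal H = \bigcup_l \mathcal H_l$. Relatedly, your product inequality $(n+1)^{n+1}\prod_j F_{y_j} \leq 1$ holds only on $\mathcal W^+_{J_0}$, where all $F_{y_j} \geq 0$, not on all of $\inter C^+$ (near the other vertices some $F_{y_j}$ are negative), and the coordinate walls $x_j = 0$ are \emph{interior} to $\mathcal H$, not part of $\partial \mathcal H = \bigcup_l \mathcal S_l$.

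Two further points. First, the ``lower faces $E_J$'' you list as strata of $\tilde C$ are not part of $C$ at all: by construction the coamoeba contains only its vertices and interior points (see the caption of Figure \ref{co_am}), and as $y$ approaches such a face the image $\h(y)$ escapes to infinity along the corresponding leg $\Gamma_J$ (Lemma \ref{hnear_faces}); it does not land in any intersection $\mathcal H_k \cap \mathcal H_l$. Second, your injectivity step is only gestured at (``a degree or invariance-of-domain argument rules out folding''), whereas there is a short complete argument available, and it is the one the paper uses: $\inter C^+$ is convex and the Hessian of $F$ is negative definite there (Corollary \ref{HessFneg}), so for distinct $q_1, q_2 \in \inter C^+$ with $v = q_1 - q_2$ one integrates along the segment to get $\inn{\h(q_2)-\h(q_1)}{v} < 0$, whence $\h$ is injective; surjectivity onto $\inter \mathcal H$ then follows from Corollary \ref{immagineH}, the boundary behavior of Corollary \ref{h_near_bndry}, and the fact that the exceptional fibres $\pi^{-1}(p_k)$ map diffeomorphically onto the $\mathcal S_k$. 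Your overall scheme (local diffeomorphism from the Appendix, plus properness and boundary matching) is salvageable, but only after discarding the identification of $\h(\inter C^+)$ with $\inter \mathcal H_0$ and replacing the role you assign to the group $G$ by the star-neighborhood decomposition.
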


\begin{figure}[!ht] 
\begin{center}
\includegraphics{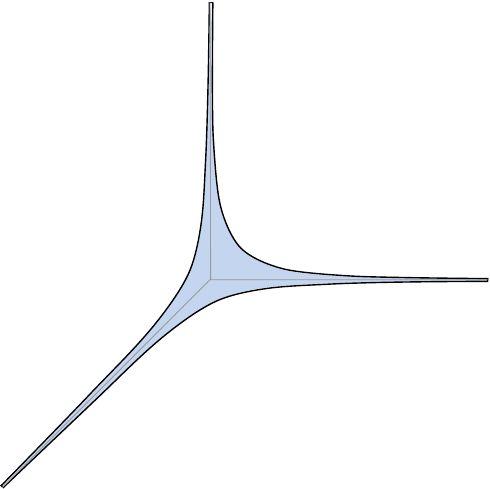}
\caption{} \label{lagrangian_amoeba}
\end{center}
\end{figure}

The statement must be true for all values of $n$, but unfortunately we have a complete proof only in these dimensions, the main difficulty is proving that $\h$ is a local diffeomorphism. We prove this for $n=1$ and $2$ in the Appendix, Proposition \ref{imh23}.

Figure \ref{lagrangian_amoeba} depicts $\mathcal H$ in the case $n=1$. It clearly resembles the amoeba of a complex hyperplane in $(\C^*)^2$.  Lemma \ref{invo} allows us to restrict to $C^+$. 


\begin{defi} For every pair of vertices $p_k$ and $p_j$ of $C^+$, let $\delta_{jk}$ be the hyperplane that contains all vertices different from $p_k$ and $p_j$ and passes through the middle point of the edge from $p_k$ to $p_j$. This hyperplane cuts $C^+$ in two halves. We denote by $\Delta_{jk}$ the half which contains $p_k$.
\end{defi}

Clearly, the set of hyperplanes $\delta_{jk}$ cuts $C^+$ into the first barycentric subdivision of $C^+$. 
We have the following inequalities defining $\Delta_{jk}$
\begin{equation} \label{inqDelta1} 
     \Delta_{j0} =  \left \{ y \in C^+ \, | \, 2y_j + \sum_{k\neq j} y_k \leq \frac{\pi}{2} \right \} 
\end{equation}
and when $j,k \neq 0$
\begin{equation} \label{inqDelta2} 
 \Delta_{jk} = \left \{ y \in C^+ \, | \,  y_k-y_j \geq 0 \right \}.
\end{equation}
For every face $E_J^+$ of $C^+$ let $\mathcal W_{J}^+$ denote its star neighborhood, i.e. the union of simplices of the barycentric subdivision whose closures contain the barycenter of $E_J^+$. We have that 
\begin{equation} \label{facenbh}
       \mathcal W_{J}^+ = \bigcap_{k \notin J, j \in J} \Delta_{jk}.
\end{equation}
As usual we denote by $\mathcal W_{J}^-$ the image of $\mathcal W_{J}^+$ with respect to $\iota$ and 
\[ \mathcal W_{J} = \mathcal W_{J}^- \cup \mathcal W_{J}^+,\]
\[ \tilde{\mathcal W}_{J}= \pi^{-1}(\mathcal W_{J}). \]

We have a dual structure for $\mathcal H$. 

\begin{defi}
For every $j,k =0, \ldots, n+1$ with $j \neq k$ let 
\[ d_{jk} =  \spn_{\R} \{ u_l \, | \, l \neq j,k \} \]
It is a codimension $1$ vector subspace which divides $M_{\R}$ in two halves. Denote by $D_{jk}$ the half which contains $u_j$. 
\end{defi}

We have the following inequalities defining $D_{jk}$
\[ D_{j0}=  \left \{ x \in M_{\R} \, | \, x_j \geq 0 \right \} \]
and when $j,k \neq 0$
\[ D_{jk} = \left \{ x \in M_{\R}  \, | \,  x_j-x_k \geq 0 \right \}. \]
Let 
\begin{equation} \label{h_nbhoods}
  \mathcal V_{J} = \bigcap_{j \in J, k \notin J} D_{jk}
\end{equation}
When $1 \leq |J| \leq n$, $\mathcal V_{J}$ contains the face $\Gamma_J$ of $\Gamma$ and can be regarded as a neighborhood of it, analogous to the star neighborhood $\mathcal W_{J}$ of the face $E_{J}$. Moreover 
\[ \mathcal V_{J_k} \cap \mathcal H = \mathcal H_k.\]

We have the following useful facts:

\begin{lem} \label{tnbhd}
\[ \begin{split}
             R^*_l( \mathcal V_{J}) &= \mathcal V_{R_lJ}, \\
             R_l(\mathcal W_{J}) &= \mathcal W_{R_lJ}. 
   \end{split}  \]
\end{lem}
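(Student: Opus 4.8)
The plan is to treat both identities in parallel, exploiting that $\mathcal V_{J}$ and $\mathcal W_{J}^+$ are each written as an intersection of half-spaces indexed by pairs $(j,k)$ with $j \in J$ and $k \notin J$, and that $R^*_l$ (respectively $R_l$) permutes these half-spaces in the manner dictated by its action on the index set. Write $\sigma$ for the transposition of $\{0, \ldots, n+1\}$ exchanging $0$ and $l$. Recall from \S\ref{symmetries} that $R^*_l$ is the linear map with $R^*_l u_m = u_{\sigma(m)}$ for all $m$, that $R_l$ is the affine automorphism of $T$ with $R_l p_m = p_{\sigma(m)}$ mapping $C^+$ to itself, and that both realize the index action $J \mapsto R_l J = \sigma(J)$.

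First I would establish the two half-space identities
\[ R^*_l(D_{jk}) = D_{\sigma(j), \sigma(k)}, \qquad R_l(\Delta_{jk}) = \Delta_{\sigma(j), \sigma(k)}. \]
For the first, the bounding subspace $d_{jk} = \spn_{\R}\{u_m : m \neq j, k\}$ is carried by the linear map $R^*_l$ to $\spn_{\R}\{u_{\sigma(m)} : m \neq j, k\} = d_{\sigma(j), \sigma(k)}$, while the distinguished vertex $u_j$ singling out the half $D_{jk}$ goes to $u_{\sigma(j)}$, which singles out $D_{\sigma(j),\sigma(k)}$. For the second, the same reasoning applies once one uses that $R_l$ is affine and permutes vertices: it sends $\delta_{jk}$, the hyperplane through all vertices other than $p_j, p_k$ and through the midpoint of the edge $p_j p_k$, to the hyperplane through all vertices other than $p_{\sigma(j)}, p_{\sigma(k)}$ and through the midpoint of $p_{\sigma(j)} p_{\sigma(k)}$ (affine maps preserve midpoints), that is, to $\delta_{\sigma(j),\sigma(k)}$; and it sends the half containing $p_k$ to the half containing $p_{\sigma(k)}$.

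With these in hand, since $R^*_l$ is a bijection it commutes with intersection, so
\[ R^*_l(\mathcal V_{J}) = \bigcap_{j \in J,\ k \notin J} R^*_l(D_{jk}) = \bigcap_{j \in J,\ k \notin J} D_{\sigma(j), \sigma(k)} = \bigcap_{j' \in \sigma(J),\ k' \notin \sigma(J)} D_{j'k'} = \mathcal V_{R_l J}, \]
where the reindexing uses that $\sigma$ is a bijection with $\sigma(\{0,\dots,n+1\}\setminus J) = \{0,\dots,n+1\}\setminus \sigma(J)$. The identical computation on $C^+$, via \eqref{facenbh} and the fact that $R_l$ preserves $C^+$, yields $R_l(\mathcal W_{J}^+) = \mathcal W_{R_l J}^+$.

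Finally I would upgrade this to the full statement for $\mathcal W_J = \mathcal W_J^+ \cup \mathcal W_J^-$. The point is that $R_l$ commutes with the involution $\iota$: since $R_l(y) = (R^*_l)^t(y) + p_l$, the composites $\iota R_l$ and $R_l \iota$ differ by translation by $2 p_l = \pi u_l^*$, which lies in $N$ and hence acts trivially on $T$. Therefore $R_l(\mathcal W_J^-) = R_l\,\iota(\mathcal W_J^+) = \iota\, R_l(\mathcal W_J^+) = \iota(\mathcal W_{R_l J}^+) = \mathcal W_{R_l J}^-$, and taking the union with the $\mathcal W^+$ identity gives $R_l(\mathcal W_J) = \mathcal W_{R_l J}$. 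I expect the only genuinely delicate point to be this last commutation with $\iota$ — checking that $2p_l$ is a lattice vector and keeping the index bookkeeping straight — since the half-space identities and the reindexing are immediate from the definitions.
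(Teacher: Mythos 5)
Your proof is correct and is exactly the argument the paper leaves implicit: the paper's entire proof of Lemma \ref{tnbhd} is the remark that these identities are easy consequences of the definitions. Your unwinding — the half-space identities $R^*_l(D_{jk}) = D_{\sigma(j)\sigma(k)}$ and $R_l(\Delta_{jk}) = \Delta_{\sigma(j)\sigma(k)}$, the reindexing of the intersections \eqref{h_nbhoods} and \eqref{facenbh} under the bijection $\sigma$, and the observation that $R_l$ commutes with $\iota$ on $T$ because $\iota R_l$ and $R_l \iota$ differ by translation by $2p_l = \pi u_l^* \in N$ — supplies precisely the details the paper omits, the $\iota$-commutation being the only step needing genuine verification.
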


\begin{proof} These are easy consequences of the definitions \end{proof}

Combined with Lemma \ref{eq_action}, the above lemma tells us that the behavior of $\h$ in a neighborhood of any face is determined by its behavior in a specific one.
Recall that a vertex $p_k$ is the face $E_{J_k}$ where $J_k$ is the complement of $k$. 
\begin{lem} \label{hvertexnbd} \[ \h(\tilde{\mathcal W}_{J_k}) \subseteq \mathcal H_{k}. \]
\end{lem}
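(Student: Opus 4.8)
The plan is to reduce everything to the vertex $p_0$ by symmetry and then read off both defining inequalities of $\mathcal H_0$ directly from formula \eqref{fy}. First I would reduce to the case $k=0$. Since $R_k$ acts on indices as the transposition of $0$ and $k$, one checks that $R_kJ_0=J_k$, so by Lemma \ref{tnbhd} we have $R_k(\mathcal W_{J_0})=\mathcal W_{J_k}$. Lifting the $G$-action to $\tilde C$ (Lemma \ref{symmblup}) and using that this lift covers the action on $C$, i.e. $\pi\circ R_k=R_k\circ\pi$, gives $\tilde{\mathcal W}_{J_k}=R_k(\tilde{\mathcal W}_{J_0})$. By Lemma \ref{eq_action} we have $\h(R_k(q))=R^*_k\h(q)$, hence $\h(\tilde{\mathcal W}_{J_k})=R^*_k\,\h(\tilde{\mathcal W}_{J_0})$; since $\mathcal H_k=R^*_k\mathcal H_0$ by definition, it suffices to prove $\h(\tilde{\mathcal W}_{J_0})\subseteq\mathcal H_0$. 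Moreover, by Lemma \ref{invo} the map $\h$ is $\iota$-invariant and $\mathcal W_{J_0}^-=\iota(\mathcal W_{J_0}^+)$, so I may restrict attention to $C^+$.

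Next I would describe the region explicitly. By \eqref{facenbh} and \eqref{inqDelta1}, since $J_0=\{1,\ldots,n+1\}$ and its only missing index is $0$, we have $\mathcal W_{J_0}^+=\bigcap_{j=1}^{n+1}\Delta_{j0}$, which is the set of $y\in C^+$ satisfying $y_j+S\le\tfrac{\pi}{2}$ for all $j$, where $S=\sum_{k}y_k$ (using $2y_j+\sum_{k\neq j}y_k=y_j+S$). On the dense open subset where every $y_k\in(0,\tfrac{\pi}{2})$ (so that $y$ lifts canonically to $\tilde C$ away from the exceptional fibre), I would work with \eqref{fy}, which on $C^+$ reads $F_{y_j}=\cos(y_j+S)\prod_{k\neq j}\sin y_k\,/\,[(n+1)(\cos S\prod_k\sin y_k)^{n/(n+1)}]$. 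Nonnegativity $F_{y_j}\ge 0$ is then immediate: the denominator and the factor $\prod_{k\neq j}\sin y_k$ are positive, and $\cos(y_j+S)\ge 0$ is precisely the inequality defining $\Delta_{j0}$. This places $\h(y)$ in the positive orthant, giving the first condition defining $\mathcal H_0$.

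For the product bound I would multiply the $F_{y_j}$: the factors $(\prod_k\sin y_k)^n$ cancel, leaving
\[ \prod_{j=1}^{n+1}F_{y_j}=\frac{\prod_{j=1}^{n+1}\cos(y_j+S)}{(n+1)^{n+1}(\cos S)^n}. \]
Since $0\le y_j+S\le\tfrac{\pi}{2}$ on $\mathcal W_{J_0}^+$ and cosine is decreasing on $[0,\tfrac{\pi}{2}]$, each factor satisfies $\cos(y_j+S)\le\cos S\le 1$, so $\prod_j\cos(y_j+S)\le(\cos S)^{n+1}\le(\cos S)^n$, whence $\prod_jF_{y_j}\le\tfrac{1}{(n+1)^{n+1}}$. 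Thus $\h(y)\in\mathcal H_0$ on the dense open subset; as $\mathcal H_0$ is closed and $\h$ is continuous on $\tilde C$, the inclusion extends to all of $\tilde{\mathcal W}_{J_0}$, in particular to the exceptional fibre $\pi^{-1}(p_0)$, which $\h$ maps onto $\mathcal S_0\subset\partial\mathcal H_0$.

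The argument in fact works for every $n$; the dimension restriction in Proposition \ref{imh} concerns only the harder claim that $\h$ is a diffeomorphism onto $\mathcal H$, not this containment. The one point requiring care is the bookkeeping of the degenerate loci where \eqref{fy} is not literally valid, namely the faces $y_k=0$ and the exceptional fibre, handled by the density-and-continuity step above. I would also note that $\cos S>0$ throughout $\mathcal W_{J_0}^+$, since $S=\tfrac{\pi}{2}$ together with the constraints $y_j\ge 0$ and $y_j+S\le\tfrac{\pi}{2}$ would force every $y_j=0$ and hence $S=0$, a contradiction; this guarantees the denominator in the displayed identity never vanishes on the relevant open set. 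I expect this verification of the degenerate cases, rather than the core inequality, to be the only mild obstacle.
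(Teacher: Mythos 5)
Your proposal is correct and follows essentially the same route as the paper: the identical symmetry reduction to $\h(\mathcal W_{J_0}^+)\subseteq\mathcal H_0$ via Lemmas \ref{eq_action}, \ref{tnbhd} and \ref{invo}, then nonnegativity of each $F_{y_j}$ from the inequalities \eqref{inqDelta1}--\eqref{facenbh}, and the same product computation bounding $\prod_j F_{y_j}$ by $\cos\bigl(\sum_k y_k\bigr)/(n+1)^{n+1}\le 1/(n+1)^{n+1}$. The only additions (the density-and-continuity extension to the exceptional fibre and the observation that $\cos\bigl(\sum_k y_k\bigr)>0$) are points the paper leaves implicit, and they are handled correctly.
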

\begin{proof} By Lemmas \ref{eq_action}, \ref{tnbhd}, \ref{invo} and by the definition of $\mathcal H_k$, it is enough to prove that 
\[ \h(\mathcal W_{J_0}^+)\subseteq \mathcal H_{0}. \]
We have that $h(y) \in \mathcal H_{0}$ if and only if $F_{y_j}(y) \geq 0$ for all $j$ and 
\[ \prod_{j=1}^{n+1} F_{y_j}(y) \leq \frac{1}{(n+1)^{n+1}}. \]
On the other hand, $y \in \mathcal W_{J_0}^+$ satisfies
\begin{equation} \label{wpk} 
            0 \leq y_j \leq  \frac{\pi}{2} \quad \text{and} \quad 0 \leq \sum_{k=1}^{n+1} y_k \leq2y_j + \sum_{k \neq j} y_k \leq \frac{\pi}{2}, \quad  \forall j.
 \end{equation}
This follows from  \eqref{facenbh} and \eqref{inqDelta1}. In particular $F_{y_j} \geq 0$ on  $\mathcal W_{J_0}^+$ since all factors in \eqref{fy} are non-negative. Moreover
\[ \prod_{j=1}^{n+1} F_{y_j} = \frac{\prod_{j=1}^{n+1} \cos \left(2y_j + \sum_{k \neq j} y_k \right)}{(n+1)^{n+1} \left(\cos \left(\sum_{k=1}^{n+1}y_k \right) \right)^n}. \]
Inequalities \eqref{wpk} imply that  
\[ \prod_{j=1}^{n+1} F_{y_j} \leq \frac{1}{(n+1)^{n+1}} \cos \left( \sum_{k=1}^{n+1}y_k \right) \leq \frac{1}{(n+1)^{n+1}}. \]
\end{proof}

\begin{figure}[!ht] 
\begin{center}
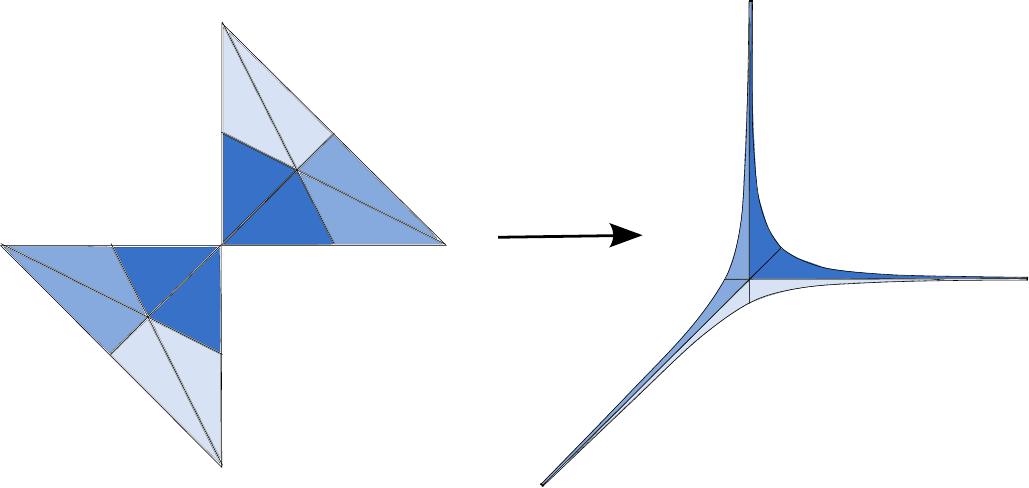
\caption{The images of star neighborhoods of vertices. Areas with same shading (color) are matched by $\h$.} \label{amoeba_bar_subdiv1}
\end{center}
\end{figure}

\begin{cor} \label{immagineH}  \[ \h(\tilde C) \subseteq \mathcal H. \]
\end{cor}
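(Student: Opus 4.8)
The plan is to deduce the inclusion directly from Lemma \ref{hvertexnbd} by covering the blow-up $\tilde C$ with the preimages $\tilde{\mathcal W}_{J_k}$ of the star neighborhoods of the vertices $p_k$, $k=0,\ldots,n+1$. Since $\mathcal H=\bigcup_{l=0}^{n+1}\mathcal H_l$ by definition \eqref{the amoeba}, once one knows that the $\tilde{\mathcal W}_{J_k}$ exhaust $\tilde C$ the statement is immediate: applying $\h$ to each piece and using $\h(\tilde{\mathcal W}_{J_k})\subseteq \mathcal H_k$ gives
\[ \h(\tilde C)=\bigcup_{k=0}^{n+1}\h(\tilde{\mathcal W}_{J_k})\subseteq \bigcup_{k=0}^{n+1}\mathcal H_k=\mathcal H. \]
Thus the entire proof reduces to the covering statement $\tilde C=\bigcup_{k}\tilde{\mathcal W}_{J_k}$.

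First I would check that the star neighborhoods of the $n+2$ vertices cover $C^+$, that is $C^+=\bigcup_{k=0}^{n+1}\mathcal W_{J_k}^+$. By \eqref{facenbh} together with \eqref{inqDelta1} and \eqref{inqDelta2}, one has $\mathcal W_{J_k}^+=\bigcap_{j\neq k}\Delta_{jk}$, and since $\Delta_{jk}$ is the half of $C^+$ containing $p_k$, this is precisely the barycentric cell of the vertex $p_k$, namely the set of points of $C^+$ whose $p_k$-th barycentric coordinate is maximal among all coordinates. As every point of the simplex $C^+$ has a maximal barycentric coordinate, these cells cover $C^+$; equivalently, each top-dimensional simplex of the barycentric subdivision of $C^+$ contains exactly one of the original vertices $p_0,\ldots,p_{n+1}$, so the closed stars of these vertices exhaust $C^+$.

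Then I would pass to $C$ and to the blow-up. Since $\mathcal W_{J_k}^-=\iota(\mathcal W_{J_k}^+)$ and $C^-=\iota(C^+)$, the same covering holds for $C^-$, whence $\bigcup_k\mathcal W_{J_k}=C^+\cup C^-=C$. Taking the preimage under $\pi$ gives $\bigcup_k\tilde{\mathcal W}_{J_k}=\pi^{-1}(C)=\tilde C$, which is exactly the covering needed; combined with the display above this finishes the proof.

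I do not expect any genuine obstacle here, since the analytic work has already been carried out in Lemma \ref{hvertexnbd} (which itself used the equivariance of $\h$ and the involution $\iota$ to reduce everything to $\mathcal W_{J_0}^+$). The only point requiring verification is the elementary combinatorial claim that the vertex star neighborhoods cover $C^+$, and once $\mathcal W_{J_k}^+$ is identified with the maximal-barycentric-coordinate region this is immediate. I would also remark that, in contrast with Proposition \ref{imh}, this inclusion holds for all $n$, because both Lemma \ref{hvertexnbd} and the covering argument are dimension-independent.
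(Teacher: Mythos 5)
Your proof is correct and is essentially the argument the paper intends: Corollary \ref{immagineH} is stated there as an immediate consequence of Lemma \ref{hvertexnbd}, with the covering of $\tilde C$ by the vertex star neighborhoods $\tilde{\mathcal W}_{J_k}$ left implicit (cf.\ Figure \ref{amoeba_bar_subdiv1}). Your identification of $\mathcal W_{J_k}^+$ with the maximal-barycentric-coordinate region, and the resulting covering $\tilde C=\bigcup_k\tilde{\mathcal W}_{J_k}$, is exactly the missing elementary step, and your observation that this holds for all $n$ is also correct.
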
 

The following lemma describes the behavior of $\h$ near the boundary of $C^+$.
\begin{lem} \label{hnear_faces} 
Let $E_J$ be a face of $C$ of codimension $1 \leq |J| \leq n$ and let $\{ q_\ell \}$ be a sequence of points of $C$ which converges to a point in $\inter{E_J}$. Then we have the following behaviour of $\h$. If $p_0$ is a vertex of $E_J$ (i.e. $0 \notin J$) then
\[ \begin{split}
          \lim h_{j}(q_\ell) &= + \infty \quad \forall j \in J,  \\
          \lim h_{k}(q_{\ell}) & = 0 \quad \forall k \notin J \cup \{ 0 \}.
   \end{split}
\]
If $p_0$ is not one of the vertices of $E_J$ (i.e. $0 \in J$), then for all $i \notin J$ we have 
\[ \begin{split}
          \lim h_{i}(q_\ell) &= - \infty,  \\
          \lim h_{j}(q_{\ell})- h_{i}(q_{\ell}) & = + \infty \quad \forall j \in J - \{ 0 \}, \\
          \lim h_{k}(q_{\ell})- h_{i}(q_{\ell}) & = 0 \quad \forall k \notin J. 
   \end{split}
\]
\end{lem}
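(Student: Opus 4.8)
The plan is to read everything off from the closed formula \eqref{fy}, which I would first recast on $\inter C^+$ in the compact form
\[
   F_{y_j} = \frac{F}{n+1}\bigl( \cot y_j - \tan \sigma \bigr), \qquad \sigma := \sum_{k=1}^{n+1} y_k ,
\]
a one-line consequence of $2y_j+\sum_{k\neq j}y_k = y_j+\sigma$ together with $\cos(y_j+\sigma)=\cos y_j\cos\sigma-\sin y_j\sin\sigma$ and $F=(\cos\sigma\,\prod_k\sin y_k)^{1/(n+1)}$. This form isolates the only two sources of divergence of a component of $\h$: the factor $\cot y_j$, which blows up exactly when $y_j\to 0$, and the factor $\tan\sigma$, which blows up exactly when $\sigma\to\pi/2$. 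Using Lemmas \ref{eq_action}, \ref{tnbhd} and \ref{invo} I would first normalise the face and restrict to $C^+$, reducing to the two cases $0\notin J$ and $0\in J$.

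For the limits asserted to be $0$ the argument is robust and needs no rate analysis. If $0\notin J$ the limit point $q_*\in\inter E_J$ has $y_k>0$ for $k\notin J$ and $\sigma_*=\sum_{k\notin J}y_k\in(0,\pi/2)$, so $\tan\sigma$ and each $\cot y_k$ ($k\notin J$) stay bounded while $F\to 0$ (some $\sin y_j$ with $j\in J$ vanishes); hence $F_{y_k}\to 0$ at once. If $0\in J$ the face condition forces $\sigma_*=\pi/2$, so $\tan\sigma\to+\infty$, and the clean device is to pass to differences
\[
   h_k - h_i = \frac{F}{n+1}\bigl( \cot y_k - \cot y_i \bigr),
\]
in which the common $\tan\sigma$ cancels; for $i,k\notin J$ the right-hand side is $F$ times a bounded quantity and tends to $0$. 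This disposes of every ``$=0$'' assertion in the statement.

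The genuine difficulty is the divergent limits: $h_j\to+\infty$ ($j\in J$) when $0\notin J$, and $h_i\to-\infty$, $h_j-h_i\to+\infty$ when $0\in J$. Each is an indeterminate $0\cdot\infty$ product of a vanishing $F$ against a blowing-up cotangent or tangent. For example, when $0\notin J$ one has $h_j\sim F/\bigl((n+1)y_j\bigr)$ with $F\sim C\prod_{i\in J}y_i^{1/(n+1)}$, so
\[
   h_j \sim \frac{C}{n+1}\, y_j^{-\frac{n}{n+1}} \prod_{i\in J\setminus\{j\}} y_i^{\frac{1}{n+1}} .
\]
The plan is to compare the order of vanishing of $F$ with the order of blow-up of the divergent factor (in the case $0\in J$ the quantity $\delta:=\tfrac{\pi}{2}-\sigma\to 0$ enters through $\cos\sigma\sim\delta$), and to exhibit a negative net exponent using $|J|\le n<n+1$; the signs of the limits then come from $\cot y_j>0$ and $\tan\sigma>0$. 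I expect this to be the main obstacle, precisely because the net exponent depends on the relative speeds at which the vanishing coordinates approach $0$: for a balanced approach each exponent is $(|J|-n-1)/(n+1)<0$ and the limit is as claimed, but a badly unbalanced approach can make the individual product fail to diverge, so the sharp pointwise statement will need the vanishing coordinates to tend to $0$ at comparable rates (or a comparable uniform estimate). Establishing such control is the crux of the proof.
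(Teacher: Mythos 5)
Your reformulation $F_{y_j}=\tfrac{F}{n+1}\bigl(\cot y_j-\tan\sigma\bigr)$ is correct, and your treatment of every vanishing limit --- boundedness of $\tan\sigma$ and of the $\cot y_k$, $k\notin J$, against $F\to 0$ when $0\notin J$, and the cancellation of $\tan\sigma$ in the differences $h_k-h_i$ when $0\in J$ --- is complete and sound. In spirit this is the same route as the paper: its entire proof is that the case $0\notin J$ ``follows directly from the explicit formula \eqref{fy}'' and that the case $0\in J$ reduces to the first one via the symmetry $\mathcal R_i$ of Lemma \ref{eq_action}; your difference trick is exactly what that symmetry reduction produces.

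The ``crux'' you isolate, however, is not something further work on your part would settle: it is a genuine flaw in the lemma as stated, which your exponent count exposes and which the paper's one-line argument overlooks. Concretely, for $n=2$ and $J=\{1,2\}$ take $q_\epsilon=(\epsilon,\epsilon^{10},y_3^*)$ with $y_3^*\in(0,\pi/2)$ fixed; your asymptotic $h_1\sim C\,y_2^{1/3}y_1^{-2/3}$ gives $h_1(q_\epsilon)\sim C\epsilon^{8/3}\to 0$, contradicting the claimed $h_1\to+\infty$. The same unbalanced trick defeats the divergence claims when $0\in J$ (let $\tfrac{\pi}{2}-\sigma$ vanish much more slowly than the coordinates $y_j$, $j\in J\setminus\{0\}$). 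Two remarks complete the picture. First, when $|J|=1$ --- the only case that occurs for $n=1$, hence the only case needed for Theorem \ref{trop_to_lag} --- the product over $J\setminus\{j\}$ is empty, your exponent is $-n/(n+1)<0$ unconditionally, and your computation already is a full proof. Second, what is true for all $1\leq|J|\leq n$ independently of rates, and what the two applications (Corollaries \ref{h_near_bndry} and \ref{fbr_bndl}) actually require, is the weaker, rate-free conclusion
\[
\prod_{j\in J}h_j(q_\ell)\to+\infty,
\qquad\text{respectively}\qquad
(-h_i)\prod_{j\in J\setminus\{0\}}\bigl(h_j-h_i\bigr)\to+\infty,
\]
hence $\max_{j\in J}h_j\to+\infty$ (resp.\ divergence of the maximal factor): indeed your asymptotics give $\prod_{j\in J}h_j\sim C\bigl(\prod_{k\in J}y_k\bigr)^{\frac{|J|}{n+1}-1}$, whose exponent is negative precisely because $|J|\leq n<n+1$, and similarly in the second case with the extra factor $\tfrac{\pi}{2}-\sigma$. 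So the correct move is not to impose comparable rates of vanishing but to weaken the divergent conclusions of the statement; with that emendation your strategy yields a complete and correct proof, and the downstream uses of the lemma survive.
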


\begin{proof} The case when $p_0$ is a vertex of $E_J$ follows directly from the explicit formula \eqref{fy} for $h_j$. If $p_0$ is not a vertex of $E_J$ then for all $i \notin J$ we can apply a transformation $\mathcal R_i$ defined in \eqref{glob_symm} to reduce to the first case. The resulting behavior is the one described. 
\end{proof}

\begin{cor} \label{h_near_bndry} If $\{ q_k \}$ is a sequence of points of $C$ which converges to a point on the boundary of $C$, then either $\{ \h(q_k) \}$ converges to a point on the boundary of $\mathcal H$ or $\lim_{k \rightarrow +\infty} || h(q_k)|| = + \infty$. 
\end{cor}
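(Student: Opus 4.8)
The plan is to read the boundary behaviour of $\h$ off the stratification of the closed coamoeba $\bar C$ into the relative interiors of its faces $E_J$. Since $\{q_k\}$ is a sequence in $C$ converging to a point $q_\infty$ on $\partial \bar C$, that limit lies in $\inter E_J$ for a unique $J$ with $1 \leq |J| \leq n+1$. I would split into two cases according to the dimension of $E_J$: the positive-dimensional faces ($1 \leq |J| \leq n$) will always produce the alternative $\|\h(q_k)\| \to +\infty$, while only the vertices ($|J| = n+1$) can produce convergence to a point of $\partial \mathcal H$. Throughout I keep in mind that $\h(q_k) \in \mathcal H$ by Corollary \ref{immagineH}, and that $\partial \mathcal H = \bigcup_l \mathcal S_l$.

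For a positive-dimensional face I would simply invoke Lemma \ref{hnear_faces}. If $p_0$ is a vertex of $E_J$ (the case $0 \notin J$) the lemma forces $h_j(q_k) \to +\infty$ for $j \in J$; if $p_0$ is not a vertex of $E_J$ (the case $0 \in J$) it forces $h_i(q_k) \to -\infty$ for $i \notin J$. In either sub-case a single coordinate of $\h(q_k)$ is unbounded, so $\|\h(q_k)\| \to +\infty$, which is the second alternative. Note that here $q_\infty$ is not a vertex, so the $q_k$ are forced to be interior points of $C$ (the vertices of $C$ do not accumulate on $\inter E_J$), and Lemma \ref{hnear_faces} applies verbatim to the whole sequence.

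It remains to treat a vertex, which by a symmetry $\mathcal R_k$ (Lemmas \ref{eq_action} and \ref{tnbhd}, together with \eqref{bound_amoeba_k}) I may take to be $p_0$. Here I would argue directly from the local formula \eqref{fy}. In a neighbourhood of $p_0$ inside $C^+$ every component satisfies $h_j(q_k) > 0$, and the product identity computed in the proof of Lemma \ref{hvertexnbd} gives
\[ \prod_{j=1}^{n+1} h_j(q_k) \longrightarrow \frac{1}{(n+1)^{n+1}} \]
as $q_k \to p_0$, since all the cosine factors tend to $1$. Thus the tail of $\{\h(q_k)\}$ is confined to an arbitrarily small neighbourhood of the hypersurface $\mathcal S_0$ of \eqref{bound_amoeba}, which is a face of $\partial \mathcal H$. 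Consequently, if $\{\h(q_k)\}$ stays bounded then its coordinates are bounded away from $0$ (the product being bounded below by a positive constant), so every limit point has positive coordinates with product $(n+1)^{-(n+1)}$, i.e. lies on $\mathcal S_0 \subset \partial \mathcal H$; otherwise $\|\h(q_k)\|$ is unbounded and escapes to infinity along $\mathcal S_0$.

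The main obstacle is precisely this vertex case. Unlike the positive-dimensional faces, where Lemma \ref{hnear_faces} controls the entire sequence, a sequence approaching $p_0$ can behave differently along different subsequences, depending on whether its direction of approach (the coordinates $\alpha_j$ of Example \ref{exFloc}) converges inside the open simplex $\pi^{-1}(p_0)$ or degenerates onto one of its facets; in the latter situation some $h_j \to +\infty$. The clean dichotomy in the statement is therefore best understood after passing to a convergent subsequence of $\{\h(q_k)\}$ in the one-point compactification of $M_{\R}$, and the substantive point to verify carefully is that every \emph{finite} limit point produced this way lies on $\mathcal S_0$. This is exactly what the positivity $h_j > 0$ and the product identity above guarantee, and since $\mathcal S_0 \subset \partial \mathcal H$ this completes the argument.
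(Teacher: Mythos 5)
Your proof is correct and follows essentially the same route as the paper: Lemma \ref{hnear_faces} disposes of limits lying in the relative interior of a positive-dimensional face, and the vertex case is settled by direct inspection near $p_0$ after a symmetry reduction --- your positivity-plus-product-identity argument (with $\prod_j h_j \to (n+1)^{-(n+1)}$ forcing finite accumulation points onto $\mathcal S_0$) is precisely the ``direct inspection'' the paper leaves implicit. Your closing caveat is also well taken: the literal sequence-wise dichotomy can fail (a sequence approaching $p_0$ with oscillating directions has several distinct finite limit points on $\mathcal S_0$, since $\h$ is a diffeomorphism from $\pi^{-1}(p_0)$ onto $\mathcal S_0$), so the corollary is really a statement about accumulation points --- which is exactly what both your argument and the paper's establish, and all that the proof of Proposition \ref{imh} requires.
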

\begin{proof} This follows from the previous Lemma if $q_k$ converges to the interior of a proper face $E_J$ of codimension at most $n$ and from a direct inspection when $q_k$ converges to a vertex. 
\end{proof}

\begin{proof}[Proof of Proposition \ref{imh}] In the Appendix, Proposition \ref{imh23}, we prove that $\h$, restricted to $\inter C^+$ is a local diffeomorphism.  Moreover Corollary \ref{HessFneg} tells us that the Hessian of $F$ is negative definite. The gradient of a function with negative definite Hessian defined on a convex set must be an injective map. Indeed let $\gamma(t)$ be a segment joining two distinct points $q_1, q_2 \in \inter C^+$ with direction $v = q_1 - q_2$, then 
\begin{equation} \label{injectiv_ineq} 
\inn{\h(q_2)-\h(q_1)}{v} = \int_0^1 \inn{\frac{d \h(\gamma(t))}{dt}}{v} dt = \int_0^1 \inn{\hes{F}(v)}{v} dt < 0.
\end{equation}
Therefore $\h$, restricted to $\inter C^+$, is injective. Moreover Corollary \ref{immagineH} implies that $\h(\inter C^+) \subset \inter \mathcal H$. We have already seen (see Example \ref{exFloc} and Lemma \ref{smoothExt}) that $\h$ restricted to $\pi^{-1}(p_0)$ is a diffeomorphism onto the hypersurface $\mathcal S_0$, which forms one component of the boundary of $\mathcal H$. Similarly the images of the sets $\pi^{-1}(p_k)$ are the hypersurfaces $\mathcal S_k$ defined in \eqref{bound_amoeba_k}, which give the other components. This, together with Corollary \ref{h_near_bndry}, implies that $\h(\tilde C) = \mathcal H$ and $\h(\inter C^+) = \inter \mathcal H$ for topological reasons.
\end{proof}

All of the above also implies

\begin{lem} \label{hNbhd}
\[ \h( \tilde{\mathcal W}_{J}) = \mathcal V_{J} \cap \mathcal H \]
\end{lem}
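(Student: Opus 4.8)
The plan is to reduce everything to the open half $\inter C^+$, where $\h$ is a diffeomorphism onto $\inter\mathcal H$ by Proposition \ref{imh}, and to match the defining half-spaces of $\mathcal W_J^+$ and of $\mathcal V_J$ one pair at a time through the explicit formula \eqref{fy}. First I would record the elementary fact that $S := \sum_{l} y_l \le \tfrac{\pi}{2}$ on all of $C^+$, with strict inequality on $\inter C^+$: a point of $C^+$ is a convex combination $\sum_k t_k p_k$ with $p_0 = 0$, so $S = \tfrac{\pi}{2}\sum_{k\ge 1} t_k \le \tfrac{\pi}{2}(1-t_0)$. Hence $\cos S \ge 0$ throughout, and $\cos S>0$ in the interior, where moreover all $\sin y_l>0$.

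The heart of the argument is the following pairwise correspondence, valid for $y\in\inter C^+$: for every ordered pair $(j,k)$ with $j\neq k$ one has $\h(y)\in D_{jk}$ if and only if $y\in\Delta_{jk}$. For $j,k\neq 0$ I would put the two summands of $F_{y_j}-F_{y_k}$ over the common positive denominator of \eqref{fy}, factor out $\prod_{l\neq j,k}\sin y_l>0$, and use the identity
\[
   \cos(S+y_j)\sin y_k-\cos(S+y_k)\sin y_j=\cos S\,\sin(y_k-y_j).
\]
Since the denominator and $\cos S$ are nonnegative, the sign of $F_{y_j}-F_{y_k}=x_j-x_k$ equals that of $\sin(y_k-y_j)$, hence that of $y_k-y_j$ (as $|y_k-y_j|\le\tfrac{\pi}{2}$); this is exactly $\h(y)\in D_{jk}\Leftrightarrow y\in\Delta_{jk}$ by \eqref{inqDelta2}. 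The pairs involving the index $0$ are handled identically: for $k=0\notin J$ the quantity $F_{y_j}$ has the sign of $\cos(S+y_j)$, so $F_{y_j}\ge 0\Leftrightarrow S+y_j\le\tfrac{\pi}{2}$, which is \eqref{inqDelta1}; and when $0\in J$ the pair $(0,k)$ gives $x_k\le 0\Leftrightarrow\cos(S+y_k)\le 0\Leftrightarrow S+y_k\ge\tfrac{\pi}{2}$, matching the half $\Delta_{0k}$. Intersecting these equivalences over all $j\in J$, $k\notin J$ yields, via \eqref{facenbh} and \eqref{h_nbhoods}, that for $y\in\inter C^+$ one has $y\in\mathcal W_J^+\iff\h(y)\in\mathcal V_J$.

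Combining this with the diffeomorphism $\h\colon\inter C^+\to\inter\mathcal H$ of Proposition \ref{imh} gives the interior identity $\h(\mathcal W_J^+\cap\inter C^+)=\mathcal V_J\cap\inter\mathcal H$; the involution $\iota$ (Lemma \ref{invo}), under which $\h$ is invariant and which sends $\mathcal W_J^+$ to $\mathcal W_J^-$, shows that the copy in $C^-$ produces the same image, so no interior point is lost. It then remains to add the boundary, i.e.\ points of $\partial\mathcal H$ and the blow-up fibres. The inclusion $\h(\tilde{\mathcal W}_J)\subseteq\mathcal V_J\cap\mathcal H$ follows from the same sign computation by continuity together with Corollary \ref{immagineH}, and the blow-up fibres are placed correctly because $\h(\pi^{-1}(p_k))=\mathcal S_k\subset\mathcal H_k=\mathcal V_{J_k}\cap\mathcal H$. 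For the reverse inclusion on the boundary I would use that $\mathcal H=\overline{\inter\mathcal H}$ and that $\mathcal W_J^+$ and $\mathcal V_J$ are closed, so the interior equality passes to closures, the behaviour of $\h$ near the faces being controlled by Corollary \ref{h_near_bndry} and the explicit limits of Lemma \ref{hnear_faces}.

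I expect the interior pairwise correspondence to be the genuinely routine part — a one-line trigonometric identity plus sign bookkeeping — and the only delicate point to be the passage to the boundary: one must check that the closure of $\mathcal V_J\cap\inter\mathcal H$ really exhausts $\mathcal V_J\cap\mathcal H$ and that the fibres over the vertices $p_k$ with $k\notin J$ land on the correct pieces $\mathcal S_k$ rather than escaping $\mathcal V_J$. This is where I would lean on Proposition \ref{imh}, Corollary \ref{h_near_bndry} and Lemma \ref{hnear_faces}.
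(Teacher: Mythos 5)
Your interior argument is correct and follows the same basic strategy as the paper's proof: everything is reduced to comparing, pair by pair, the half-spaces $\Delta_{jk}$ of $C^+$ with the half-spaces $D_{jk}$ of $M_{\R}$ through the signs of the factors in \eqref{fy}. The difference is one of execution. The paper establishes only the one-sided inclusion $\h(\Delta_{jk})\subseteq D_{jk}$, and only for the pairs $(j,0)$, obtaining the remaining pairs from the symmetries (Lemmas \ref{eq_action} and \ref{tnbhd}) and the equality of the two sets from surjectivity of $\h$ (Proposition \ref{imh}); you instead prove the two-sided equivalence $y\in\Delta_{jk}\Leftrightarrow\h(y)\in D_{jk}$ for every pair at once, via the (correct) identity $\cos(S+y_j)\sin y_k-\cos(S+y_k)\sin y_j=\cos S\,\sin(y_k-y_j)$, where $S=\sum_l y_l$. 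Your version trades the symmetry reduction for a short trigonometric computation and makes the reverse inclusion on $\inter C^+$ immediate; that is a perfectly good, slightly more self-contained route to the same place.

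The gap is in your treatment of the exceptional fibres, which is precisely the point that the paper's one-line proof also passes over in silence. The justification ``the blow-up fibres are placed correctly because $\h(\pi^{-1}(p_k))=\mathcal S_k\subset\mathcal H_k=\mathcal V_{J_k}\cap\mathcal H$'' is a non sequitur for the lemma at hand: what is needed is $\h\bigl(\pi^{-1}(p_k)\cap\tilde{\mathcal W}_J\bigr)\subseteq\mathcal V_J$, and $\mathcal V_{J_k}$ is \emph{not} contained in $\mathcal V_J$ when $k\notin J$ and $1\le|J|\le n$. Worse, if $\tilde{\mathcal W}_J=\pi^{-1}(\mathcal W_J)$ is read literally (as defined in \S\ref{Imh}), then $\tilde{\mathcal W}_J$ contains the \emph{whole} fibre $\pi^{-1}(p_k)$ for every vertex $p_k$ of $E_J$, and the image of that fibre is all of $\mathcal S_k$, which does leave $\mathcal V_J$: for $n=1$ and $J=\{1\}$ the point $(1/4,1)\in\mathcal S_0$ fails $x_1\ge x_2$, hence lies outside $\mathcal V_{\{1\}}=D_{10}\cap D_{12}$. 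So the inclusion $\h(\tilde{\mathcal W}_J)\subseteq\mathcal V_J\cap\mathcal H$ can only hold if $\tilde{\mathcal W}_J$ is understood as the strict transform, i.e.\ the closure in $\tilde C$ of $\pi^{-1}\bigl(\mathcal W_J\setminus\{p_k\}_{k\notin J}\bigr)$. With that reading, your continuity argument already covers the fibres (every point of the strict transform is a limit of points whose images lie in the closed set $\mathcal V_J$), so the separate fibre claim is unnecessary; and surjectivity onto $\mathcal V_J\cap\partial\mathcal H$ is checked from the explicit formula \eqref{ht0}, which shows that the part of the fibre lying over directions entering $\mathcal W_J$ maps exactly onto $\mathcal V_J\cap\mathcal S_k$. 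You correctly flagged this as the delicate step, but the argument you supplied for it does not close it and, as stated, would fail.
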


\begin{proof} It follows from the fact that 
\[ \h( \Delta_{jk}) \subseteq D_{jk} \]
Using Lemmas \ref{eq_action} and \ref{tnbhd} it is enough to prove the latter inclusion for the cases $\Delta_{j0}$. This follows from \eqref{inqDelta1}, which implies that if $y \in \Delta_{j0}$, then $F_{y_j}(y) \geq 0$, i.e.  $\h(y) \in D_{j0}$. The equality of the two sets follows from surjectivity of $\h$.
\end{proof}

The following Corollary gives Theorem \ref{main_thm} for the tropical hyperplane.

\begin{cor} \label{hausdorf_conv} Let $L_{\lambda} = \Phi_{\lambda}(\tilde C) \subset T^*T$ be the family of rescaled Lagrangian pairs of pants (see Definition \ref{LpPants}) for $\lambda \in (0,1)$. Then, as $\lambda \to 0$, $L_{\lambda}$ converges in the Hausdorff topology to the PL-lift $\hat \Gamma$ of the tropical hyperplane $\Gamma$.
\end{cor}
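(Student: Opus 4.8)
The plan is to prove the two inclusions that together give Hausdorff convergence: for every $\epsilon>0$ and all small enough $\lambda$, both $L_\lambda$ lies in the $\epsilon$-neighbourhood $N_\epsilon(\hat\Gamma)$ and $\hat\Gamma$ lies in $N_\epsilon(L_\lambda)$, working inside any fixed bounded region of $M_\R\times T$ since both sets are noncompact along the legs. Since $\Phi_\lambda(q)=(\lambda\h(q),\pi(q))$, the $y$-coordinate of a point of $L_\lambda$ is unchanged while the $x$-coordinate is rescaled by $\lambda$; so everything reduces to understanding how $\lambda\h$ collapses onto $\Gamma$ while keeping track of the matching between a cone $\Gamma_J$ and its dual coamoeba face $E_J$. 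By Lemmas \ref{invo} and \ref{eq_action} both $L_\lambda$ and $\hat\Gamma$ are invariant under the involution $\iota$ and under the group $\mathcal G$; as $\mathcal G$ permutes the legs $\hat\Gamma_J$ and the $R^*_k$ are linear (hence bi-Lipschitz), it suffices to analyse one representative vertex region $\tilde{\mathcal W}_{J_0}$ together with the central region, the constants in the estimates being altered only by the finite factors $\|R^*_k\|$.

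The first key estimate is that $\H$ lies within bounded distance of $\Gamma$. On $\H_0$ one has $x_j\ge0$ and $x_1\cdots x_{n+1}\le (n+1)^{-(n+1)}$, so $\min_j x_j\le (n+1)^{-1}$ and the nearest facet $\{x_j=0\}\cap\H_0\subset\Gamma$ is at distance at most $(n+1)^{-1}$; applying the $R^*_k$ yields $\sup_{x\in\H}d(x,\Gamma)=R<\infty$. Hence $\lambda\H=f(L_\lambda)$ sits inside the $\lambda R$-neighbourhood of $\Gamma$ and collapses onto $\Gamma$ as $\lambda\to0$. The refinement I need is the face-by-face version: by Lemma \ref{hNbhd} one has $\h(\tilde{\mathcal W}_J)=\mathcal V_J\cap\H$, and the same product estimate confined to the cone $\mathcal V_J$ shows that $\mathcal V_J\cap\H$ lies within a bounded distance $R_J$ of $\Gamma_J$.

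For the inclusion $L_\lambda\subseteq N_\epsilon(\hat\Gamma)$ I split $\tilde C$ into a compact core $\{d(\pi(q),\partial C)\ge\delta\}$ and a collar near $\partial C$. On the core $\h$ is bounded by some $M_\delta$, so $\lambda\h(q)$ has norm $<\epsilon$ once $\lambda<\epsilon/M_\delta$, placing $\Phi_\lambda(q)$ within $\epsilon$ of $(0,\pi(q))\in\{0\}\times C=\hat\Gamma_\emptyset$. On the collar, $\pi(q)$ lies in some $\mathcal W_J$ with $1\le|J|\le n$ and, for $\delta$ small, within $\epsilon$ of $E_J$; since $\h(q)\in\mathcal V_J\cap\H$ is within $R_J$ of $\Gamma_J$, the rescaled point $\lambda\h(q)$ is within $\lambda R_J<\epsilon$ of $\Gamma_J$, so $\Phi_\lambda(q)$ is within $\epsilon$ of $\Gamma_J\times E_J=\hat\Gamma_J$. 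This gives the first inclusion.

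For the reverse inclusion $\hat\Gamma\subseteq N_\epsilon(L_\lambda)$ the central points $(0,[y])$ with $[y]$ in the core are approximated by the vertical lift $\Phi_\lambda(\pi^{-1}[y])$ exactly as above. The delicate points are $(0,[y])$ with $[y]$ close to a face $E_J$ and the leg points $(x,[y])\in\Gamma_J\times E_J$: here the naive vertical lift escapes to infinity because $\h\to\infty$ as one approaches $E_J$. The remedy is to slide along the graph. Since $\Gamma_J$ is a cone contained in $\mathcal V_J$ and in $\H$ (the coordinate faces lie in $\H_0$, where the product vanishes), for a target $x\in\Gamma_J$ the point $x/\lambda$ again lies in $\Gamma_J\subseteq\mathcal V_J\cap\H=\h(\tilde{\mathcal W}_J)$, so there is $q$ with $\lambda\h(q)=x$; Proposition \ref{imh} and Lemma \ref{hnear_faces} guarantee that as $\lambda\to0$, and hence $x/\lambda\to\infty$ along $\Gamma_J$, the corresponding $\pi(q)$ tends to $E_J$. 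Quantitatively, the blow-up rate read off from \eqref{fy} (for $E_{\{1\}}$ one has $h_1\sim y_1^{-n/(n+1)}$ while the remaining $h_k\to0$) shows that approaching $E_J$ with transverse parameter $s\sim(\lambda/|x|)^{(n+1)/n}\to0$ forces $\lambda\h(q)\to x$ and $\pi(q)\to[y]$; choosing $s=s(\lambda)$ accordingly produces points of $L_\lambda$ converging to any prescribed target. The main obstacle is precisely this slide estimate: controlling, uniformly on bounded regions, how fast one must approach $E_J$ so that the rescaled graph passes through a prescribed $x\in\Gamma_J$ over a prescribed $[y]\in E_J$. Everything else follows from the bounded-distance estimate of the second paragraph and the image description of $\h$ already established in Proposition \ref{imh} and Lemmas \ref{hNbhd} and \ref{hnear_faces}.
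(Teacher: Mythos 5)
Your global estimate ($\sup_{x\in\mathcal H}d(x,\Gamma)<\infty$, hence $\lambda\mathcal H\to\Gamma$) is correct and is the first half of the paper's own (very terse) proof, but the face-matched refinement on which your first inclusion rests is false. It is not true that $\mathcal V_J\cap\mathcal H$ lies within a bounded distance of $\Gamma_J$ when $1\le|J|<n$. Concretely, for $n=2$ take $J=\{1\}$ and the points $x^{(T)}=(T,T,0)$, $T>0$: they satisfy $x_j\ge 0$, $x_1x_2x_3=0\le 1/27$, and $x_1\ge x_2$, $x_1\ge x_3$, so $x^{(T)}\in\mathcal V_{\{1\}}\cap\mathcal H_0$, yet $d\bigl(x^{(T)},\Gamma_{\{1\}}\bigr)=T$ is unbounded. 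These are exactly the $\h$-images of points of $\tilde C$ lying over the part of the star $\mathcal W_{\{1\}}$ that approaches the edge $E_{\{1,2\}}$: by Lemma \ref{hnear_faces} their images run off to infinity along the two-dimensional cone $\Gamma_{\{1,2\}}$, not along the ray $\Gamma_{\{1\}}$. So for such a point $q$, with $T\sim\lambda^{-2}$ say, your collar argument (which pairs $q$ with $\hat\Gamma_{\{1\}}$ because $\pi(q)\in\mathcal W_{\{1\}}$) fails: $\Phi_\lambda(q)$ is close to $\hat\Gamma$, but only via the larger face $\hat\Gamma_{\{1,2\}}$. The repair is to choose $J$ from the $x$-side rather than the $y$-side: set $J=\{j:\ h_j(q)\ge M\}$ (after using the symmetries to put $\h(q)\in\mathcal H_0$); the product inequality forces $|J|\le n$, one gets $d(\lambda\h(q),\Gamma_J)\le\lambda\sqrt{n}\,M$ for free, and what then has to be proved is the statement your write-up is missing, namely a converse of Lemma \ref{hnear_faces}: for every $\delta>0$ there is $M$ such that $h_j(q)\ge M$ for all $j\in J$ implies $d(\pi(q),\bar E_J)<\delta$. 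This converse does hold (a compactness argument on $\bar C$, ruling out limit points in $\inter C$, in faces $\bar E_{J'}$ with $J\not\subseteq J'$, and at vertices, using Lemma \ref{hnear_faces} case by case), and it is where the real content of the corollary sits.

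For the reverse inclusion you concede the gap yourself: the ``slide estimate'' you call ``the main obstacle'' is precisely the step that is not proved, and as formulated it is also aimed at the wrong target — arranging $\lambda\h(q)\to x$ by approaching $E_J$ only gives $\pi(q)\to E_J$, whereas Hausdorff convergence requires hitting the prescribed fibre point $[y]\in E_J$. The tool the paper has already installed for exactly this purpose is Corollary \ref{fbr_bndl}: $\gb_{J,k}=(\y_{J,k},\h_{J,k})$ is a diffeomorphism of $\inter\tilde{\mathcal W}_{J,k}$ onto $\tilde E_J\times\inter\Gamma_J$, so for a target $([y],x)\in E_J\times\inter\Gamma_J$ one takes $q=\gb_{J,k}^{-1}\bigl([y],x/\lambda\bigr)$, which matches the $E_J$-component and the leg component exactly; the only thing left to estimate is that the components of $\h(q)$ transverse to $V_J$ stay bounded as $x/\lambda\to\infty$ along the ray through $x$ — again a compactness statement of the same kind as above, not an asymptotic expansion of \eqref{fy}. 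In short, your scaffolding (two inclusions, symmetry reduction, graph structure over the legs) is the intended one, but as it stands the proof rests on one false lemma and one admittedly unproven estimate, so it does not yet establish the corollary for $n=2$, which is within the scope of Theorem \ref{main_thm}.
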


\begin{proof} It is clear that the projection of $L_{\lambda}$ to $M_{\R}$, which is equal to the set $\lambda \mathcal H$, converges to $\Gamma$.  The convergence of $L_{\lambda}$ to $\hat \Gamma$ follows easily from the properties $\h$ proved above.
\end{proof}
\subsection{Projections to faces and Legendre transform} \label{projections} The fact proved in Proposition \ref{imh} that $\h$ gives a diffeomorphism between $\inter C^+$ and $\inter \mathcal H$ implies that $\Phi(\inter C^+)$, i.e. half of the Lagrangian pair of pants, is also the graph of an exact one form defined over $\inter \mathcal H$. This is the classical Legendre transform. Indeed let 
\[ G(x) = \inn{x}{y} - F(y), \]
where $\inn{.}{.}$ is the standard duality pairing between $x$ and $y$. Then an elementary calculation shows that  
\[ \frac{\partial G}{\partial x_j} = y_j, \]
which proves that $\Phi(\inter C^+)$ is the graph of $dG$. We will now generalize this to show that we can break up $\Phi(\tilde C)$ into parts which are graphs of exact one forms over the ends $\hat{\Gamma}_{J}$ (see \eqref{ends_lift}) of the $PL$-lift $\hat{\Gamma}$. 

\begin{defi} \label{projFace1} Given a face $E_J$ of $C$ of codimension $1 \leq|J| \leq n$, let $L \subseteq N_{\R}$ be a vector subspace of dimension $|J|$ which is transversal to $E_J$. Let $U_{J, L}$ be the set of points $y \in \inter C$ such that there exists a $y' \in \inter E_J$ such that $y-y' \in L$. If such a $y'$ exists, it is unique by transversality. Thus we can define the projection 
\[ 
\begin{split}
 \y_{J, L}: U_{J, L} & \rightarrow \inter E_{J} \\
                        y & \mapsto y'.
 \end{split}
 \]
Recall that $\{ p_k \}_{k \notin J}$ is the set of vertices of $E_J$. Define 
\[ \tilde U_{J,L} = \pi^{-1}(U_{J,L} \cup \{ p_k \}_{k \notin J})  \subseteq \tilde C\]
Then $\y_{J,L}$ extends to a map $\y_{J,L}: \tilde U_{J,L} \rightarrow \tilde E_{J}$. 
\end{defi}

Dually we give the following definition.  

\begin{defi} \label{projFace2} Let $\Gamma_J$ be a face of $\Gamma$. Recall that we denoted by $V_J$ the smallest subspace containing $\Gamma_J$. Let $L$ be as in Definition \ref{projFace1}. Define 
\[ L^{\perp} = \{ x \in M_{\R} \, | \, \inn{x}{y} = 0 \ \forall y \in L \} \]
Then $L^{\perp}$ has dimension $n+1-|J|$ and it is transversal to $V_J$. It thus defines the projection   $\x_{J,L}: M_{\R}  \rightarrow V_J$, dual to $\y_{J,L}$, whose fibres are parallel to $L^{\perp}$.  
\end{defi}

Given a face $E_J$ of $C$, let $T_J$ be the smallest subtorus of $T$ which contains $E_J$. By construction $V_J \times T_J$ is a Lagrangian submanifold of $M_{\R} \times T$.  Given $L$ and $L^{\perp}$ as in Definitions  \ref{projFace1} and  \ref{projFace2}, the space $(V_J \times T_J) \times (L^{\perp} \times L)$ is naturally a covering of $M_{\R} \times T$ and thus induces from the latter a symplectic form. We have the following

\begin{lem} \label{cotangent} The choice of a vector subspace $L$ as in Definition \ref{projFace1} induces a natural (linear) symplectomorphism between the cotangent bundle of $V_J \times T_J$ and $(V_J \times T_J) \times (L^{\perp} \times L)$.
\end{lem}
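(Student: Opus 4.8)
The plan is to make the two complementary splittings of $M_\R$ and $N_\R$ determined by $L$ completely explicit, and then to check that the canonical symplectic form on $T^*(V_J\times T_J)$ is carried to the form that $M_\R\times T=T^*T$ induces on the covering.

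First I would pin down the relevant subspaces. Let $W_J\subset N_\R$ be the linear tangent space of $T_J$. From the description $\bar E_j=\{\langle u_j,y\rangle=0\}$ for $j\ge 1$ and $\bar E_0=\{\langle u_0,y\rangle=\tfrac{\pi}{2}\}$ in \S\ref{stCoam}, one reads off that $W_J$ is the annihilator of $V_J=\spn\{u_j\mid j\in J\}$; in particular $\dim W_J=n+1-|J|$ and $\langle V_J,W_J\rangle=0$. Since $L$ is transversal to $E_J$, hence to $W_J$, and $\dim L=|J|$, we get $N_\R=W_J\oplus L$. Dually, using $\langle L^\perp,L\rangle=0$, the intersection $V_J\cap L^\perp$ consists of elements annihilating both $W_J$ and $L$, hence all of $N_\R$, so $V_J\cap L^\perp=0$ and $M_\R=V_J\oplus L^\perp$ by a dimension count. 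These splittings yield two perfect pairings $V_J\times L\to\R$ and $L^\perp\times W_J\to\R$, whereas the cross pairings $V_J\times W_J$ and $L^\perp\times L$ vanish; thus $L\cong V_J^*$ and $L^\perp\cong W_J^*$ canonically.

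Next I would assemble the map. As $V_J\times T_J$ is a product of a vector space and a torus it is parallelizable, so its cotangent bundle is canonically trivial, $T^*(V_J\times T_J)=(V_J\times T_J)\times(V_J^*\times W_J^*)$. Let $\Psi$ be the identity on the base together with the fibrewise isomorphisms $V_J^*\cong L$ and $W_J^*\cong L^\perp$ just obtained, reordered into $(L^\perp\times L)$. This is a linear diffeomorphism onto $(V_J\times T_J)\times(L^\perp\times L)$ that depends only on $L$.

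It remains to check that $\Psi$ is symplectic, and this is where the only real work lies. Writing $m=v+a$ and $n=w+\ell$ with $v\in V_J$, $a\in L^\perp$, $w\in W_J$, $\ell\in L$, the vanishing of the cross pairings collapses the formula for $\omega$ from \S\ref{setup} to
\[ \omega(m\oplus n,\,m'\oplus n')=\big(\langle v,\ell'\rangle-\langle v',\ell\rangle\big)+\big(\langle a,w'\rangle-\langle a',w\rangle\big). \]
Pulling $\omega$ back along the covering $(v,[w],a,\ell)\mapsto(v+a,[w+\ell])$ gives exactly this expression in the product coordinates, and under $\ell\leftrightarrow V_J^*$, $a\leftrightarrow W_J^*$ the two brackets are precisely the two blocks of the canonical cotangent form $\omega_{\mathrm{can}}$, one pairing the base $V_J$ with its conjugate momentum and the other pairing the base $W_J$ with its conjugate momentum. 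The same computation recovers the Lagrangian condition on $V_J\times T_J$ as the vanishing of the base--base block, which holds because $\langle V_J,W_J\rangle=0$. The genuine content is this block--diagonalization forced by $\langle V_J,W_J\rangle=0$ and $\langle L^\perp,L\rangle=0$; the remaining points—confirming the covering map is a local linear isomorphism, fixing the ordering $(L^\perp\times L)$, and inserting a sign in one of the two fibre identifications so that both blocks of $\omega_{\mathrm{can}}$ carry the same convention—are routine bookkeeping.
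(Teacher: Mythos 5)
Your proposal is correct and takes essentially the same route as the paper: the paper's proof also identifies the fibre $L^{\perp}\times L$ with the cotangent fibre of $V_J\times T_J$ by sending $(\ell',\ell)$ to the covector $(v,w)\mapsto \langle \ell, v\rangle - \langle \ell', w\rangle$, with the relative sign chosen exactly so that the symplectic forms match. Your write-up simply makes explicit what the paper leaves implicit — the splittings $N_{\R}=W_J\oplus L$ and $M_{\R}=V_J\oplus L^{\perp}$, the perfectness of the pairings $V_J\times L$ and $L^{\perp}\times W_J$, and the block-diagonalization of $\omega$ forced by $\langle V_J,W_J\rangle=0$ and $\langle L^{\perp},L\rangle=0$.
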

 
\begin{proof} This is just linear algebra. In fact $L^{\perp} \times L$ can be naturally identified with a cotangent fibre of $V_J \times T_J$ by sending the pair $(\ell', \ell) \in L^{\perp} \times L$ to the linear form $(v, w) \mapsto \inn{\ell}{v} - \inn{\ell'}{w}$, where $v$ is a tangent vector in $V_J$ and $w$ in $T_J$. The signs in this identification are chosen in order to match the symplectic forms. \todo{check this!}
\end{proof}
 Given $L$, $U_{J,L}$ and $\tilde U_{J,L}$ as above,  define   $\h_{J,L}: \tilde U_{J,L} \rightarrow V_{J}$ to be the map  
\begin{equation*}  
                  \h_{J,L} = \x_{J,L} \circ \h  
\end{equation*}
and $\gb_{J,L}: \tilde U_{J,L} \rightarrow  V_{J} \times \tilde{E}_J$ to be 
\begin{equation} \label{mapg}
                  \gb_{J,L} = ( \y_{J,L}, \, \h_{J,L} ).
\end{equation}

\begin{ex} \label{proj_st1} Given $k \notin J$, consider the $|J|$-dimensional face of $C$ whose vertices are $p_k$ and all $p_j$'s which are not vertices of $E_J$ (i.e. $j \in J$). Let $L$ be the $|J|$ dimensional subspace parallel to this face. In this case $U_{J,L} = \inter C$. We denote the corresponding projection by $\y_{J,k}$.  We denote the projection dual to $\y_{J,k}$ in the sense of Definition \ref{projFace2} by $\x_{J,k}$. For instance, when $J=\{1, \ldots, \ell \}$ and $k=0$ we have
\[ \y_{J,0}(y) = (y_{\ell+1}, \ldots, y_{n+1}) \quad \text{and} \quad \x_{J,0}(x)= (x_1, \ldots, x_{\ell}) \]
and the maps $\h_{J,0}$ and $\gb_{J,0}$ become
\begin{equation} \label{g_special_case} 
        \h_{J,0} = (h_1, \ldots, h_{\ell} ) \quad \text{and} \quad \gb_{J,0} =(y_{\ell+1}, \ldots, y_{n+1}, h_1, \ldots, h_{\ell}). 
\end{equation}
\end{ex}

We have the following 
\begin{prop} \label{faceproj} Assume $n=1$ or $2$. The map $\gb_{J,L}: \tilde U_{J,L} \rightarrow \tilde{E_J} \times V_{J}$ is a diffeomorphism onto the open subset $Z_{J,L} = \gb_{J,L}(\tilde U_{J,L}) \subseteq  \tilde{E_J} \times V_{J}$. Moreover, via the identification of the cotangent bundle of $V_J \times T_J$ with (a covering of) $M_{\R} \times T$ given in Lemma \ref{cotangent}, $\Phi( \tilde U_{J,L})$ is the graph of an exact one form over  $Z_{J,L}$ obtained as the differential of a Legendre transform of $F$.
\end{prop}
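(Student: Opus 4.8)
The plan is to reduce to the normalized situation of Example \ref{proj_st1} and then verify the two assertions separately, treating the interior $\inter C^+$ and the exceptional fibres $\pi^{-1}(p_k)$, $k\notin J$, by hand. By Lemmas \ref{eq_action} and \ref{tnbhd} the group $\mathcal G$ carries the data $(J,L)$ for one choice of $k\notin J$ to that for any other, and by Lemma \ref{invo} the involution $\iota$ interchanges the two halves; so it is enough to treat $J=\{1,\ldots,\ell\}$ with $k=0$ and to work on $C^+$, where by \eqref{g_special_case} the map is $\gb_{J,0}=(y_{\ell+1},\ldots,y_{n+1},h_1,\ldots,h_\ell)$.

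First I would show that $\gb_{J,0}$ is a local diffeomorphism. On $\inter C^+$, in the coordinates $(y_1,\ldots,y_{n+1})$, its Jacobian has the block form
\[ \begin{pmatrix} 0 & I_{n+1-\ell} \\ A & B \end{pmatrix}, \qquad A=\left( F_{y_iy_j}\right)_{1\le i,j\le \ell}, \]
whose determinant is $\pm\det A$. Since the full Hessian of $F$ is negative definite by Corollary \ref{HessFneg}, the principal block $A$ is negative definite, hence invertible, and $\gb_{J,0}$ is a local diffeomorphism on $\inter C^+$ (and on $\inter C^-$ by $\iota$). Over the exceptional fibre $\pi^{-1}(p_0)$ I would pass to the blow-up coordinates $(\alpha_1,\ldots,\alpha_n,t)$ of Example \ref{exFloc}: there $\y_{J,0}$ merely forgets $\alpha_1,\ldots,\alpha_\ell$, while at $t=0$ the $h_i$ are the explicit functions in \eqref{ht0}. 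In logarithmic coordinates the relevant $\ell\times\ell$ block $\left(\partial h_i/\partial\alpha_m\right)_{i,m\le\ell}$ is $\tfrac{1}{n+1}J_\ell-I_\ell$ up to a positive diagonal factor, whose eigenvalues $\tfrac{\ell}{n+1}-1$ and $-1$ are all negative; combined with the smoothness of the extension (Lemma \ref{smoothExt}) this gives nonsingularity of the full Jacobian across $t=0$.

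Next I would establish injectivity. If $\gb_{J,0}(q_1)=\gb_{J,0}(q_2)$ then the two points share the coordinates $y_{\ell+1},\ldots,y_{n+1}$ and the values $h_1,\ldots,h_\ell$; in particular $\y_{J,0}$ places them in the same half and on a common fibre, so that $v:=q_1-q_2$ lies in $L=\spn\{u_1^*,\ldots,u_\ell^*\}$. Restricting to the convex set $\inter C^+$, the computation \eqref{injectiv_ineq} gives $\inn{\h(q_1)-\h(q_2)}{v}<0$ unless $v=0$; but $v\in L$ together with $h_i(q_1)=h_i(q_2)$ for $i\le\ell$ forces this pairing to vanish, so $q_1=q_2$. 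On and near the exceptional fibres injectivity follows instead from the local diffeomorphism property together with the injectivity of $\h$ on $\pi^{-1}(p_k)$ recorded in Example \ref{exFloc} and Proposition \ref{imh}. An injective local diffeomorphism is a diffeomorphism onto an open image, which proves the first assertion with $Z_{J,L}=\gb_{J,L}(\tilde U_{J,L})$ open.

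For the Legendre-transform statement I would define, on $Z_{J,L}$ via $\gb_{J,0}^{-1}$, the partial transform
\[ \Psi=\sum_{i=1}^{\ell} x_i\,y_i-F, \]
where $y_1,\ldots,y_\ell$ are viewed as functions of $(x_1,\ldots,x_\ell,y_{\ell+1},\ldots,y_{n+1})$ through the relations $x_i=F_{y_i}$, solvable by the first part. A one-line chain-rule computation, exactly as for the classical transform $G(x)=\inn{x}{y}-F(y)$, yields $\partial\Psi/\partial x_i=y_i$ for $i\le\ell$ and $\partial\Psi/\partial y_j=-h_j$ for $j>\ell$; under the identification of Lemma \ref{cotangent} these are precisely the fibre coordinates $(y_1,\ldots,y_\ell)\in L$ and $(x_{\ell+1},\ldots,x_{n+1})\in L^\perp$ of the point $\Phi(q)$, with the signs matching those built into that lemma. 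Smoothness of $\Psi$ across the exceptional fibres follows from the smoothness of the lift of $F$ in Remark \ref{Fsmooth}. The main obstacle throughout is the behaviour at the blown-up vertices: on $\inter C^{\pm}$ everything reduces to negative-definiteness of the Hessian and elementary convex analysis, but showing that the Jacobian stays nonsingular and that $\Psi$ stays smooth across $\pi^{-1}(p_k)$ --- where $\h$ escapes to the hypersurfaces $\mathcal S_k$ --- requires the explicit blow-up expressions of Example \ref{exFloc} and Lemma \ref{smoothExt}, just as in the proof of Proposition \ref{imh}; this is also the origin of the restriction to $n=1,2$, inherited from Corollary \ref{HessFneg}.
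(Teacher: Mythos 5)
Your overall strategy is the paper's own: reduce by symmetries, prove $\gb_{J,L}$ is a local diffeomorphism via a block Jacobian whose $\ell\times\ell$ block is a negative definite piece of the Hessian, prove injectivity fibrewise by the convexity argument \eqref{injectiv_ineq}, and exhibit the Legendre transform explicitly. Most steps are sound, but there are two genuine gaps.

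First, a loss of generality. The proposition is stated for an \emph{arbitrary} $|J|$-dimensional subspace $L\subseteq N_{\R}$ transversal to $E_J$, whereas your reduction covers only the special subspaces of Example \ref{proj_st1}: the group $\mathcal G$ (even together with $\iota$) is finite, it only permutes the faces and vertices of $C$, and hence only carries these finitely many special choices of $L$ to one another; it cannot move a generic transversal $L$ into normalized position. The paper handles general $L$ by a different device: it chooses a basis $\{u^*_1,\ldots,u^*_\ell\}$ of $L$ and a basis $\{u_{\ell+1},\ldots,u_{n+1}\}$ of $L^{\perp}$ completing dual bases adapted to $V_J$ and $T_J$; in the resulting linear coordinates $\h$ is again the gradient of $F$, the Hessian of $F$ is still negative definite (Corollary \ref{HessFneg}), the projections take exactly the coordinate form you use, and the fibres of $\y_{J,L}$ over $\inter E_J$ are still convex. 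With that device your computations go through verbatim for every $L$; without it you have proved a strictly weaker statement than the one asserted (and general transversal $L$ do appear later, e.g.\ in the proof of Proposition \ref{pairpants_PLft}, where the projections $\x_J$ come from the faces of the simplex $H_{\emptyset}$ and are not the coordinate projections of Example \ref{proj_st1}).

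Second, your injectivity argument on the exceptional fibres is a non sequitur. Two points of the same exceptional fibre of $\y_{J,0}$ with equal $\gb_{J,0}$-image share only $(\alpha_{\ell+1},\ldots,\alpha_n)$ and the values $h_1,\ldots,h_\ell$; nothing forces $h_j(q_1)=h_j(q_2)$ for $j>\ell$, so the injectivity of $\h$ restricted to $\pi^{-1}(p_k)$ (its being a diffeomorphism onto $\mathcal S_k$) cannot be invoked, and a local diffeomorphism which is injective on a non-compact subset need not be injective on or near that subset. The correct argument is the paper's, and you already possess its ingredients: the exceptional fibre \eqref{preim_proj_blwp} is convex (a positive orthant), and your own computation shows that at $t=0$ the block $\left(\partial h_i/\partial\alpha_m\right)_{i,m\le\ell}$ equals a positive multiple of $DAD$, where $D$ is a positive diagonal matrix and $A$ is the matrix with entries $\tfrac{1}{n+1}-\delta_{im}$; this is symmetric and negative definite, so the integral argument \eqref{injectiv_ineq} applied along segments inside the fibre gives injectivity there. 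So the gap is easily closed, but the appeal to Example \ref{exFloc} and Proposition \ref{imh} must be replaced by this convexity argument. Incidentally, your explicit eigenvalue computation at $t=0$ is a nice, more self-contained substitute for the paper's own treatment of nondegeneracy of that block, which proceeds by a limit argument (negative semidefinite as a limit of negative definite matrices, then nondegenerate because $\h|_{t=0}$ is a diffeomorphism by Lemma \ref{smoothExt}).
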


\begin{proof} Using the symmetries we can assume that $J=\{1, \ldots, \ell \}$ for some $\ell \leq n$. In this case the torus $T_J$ is spanned by the vectors $\{ u^*_{\ell+1}, \ldots, u^*_{n+1} \}$. On the dual, $V_J$ is spanned by the vectors $\{ u_1, \ldots, u_{\ell} \}$. We can choose a basis $\{ u^{*}_1, \ldots, u^*_\ell \}$ of $L$ and a basis $\{ u_{\ell+1}, \ldots, u_{n+1} \}$ of $L^{\perp}$ so that $\{u^*_{1}, \ldots, u^*_{n+1} \}$ and $\{u_1, \ldots, u_{n+1} \}$ are dual basis. Then, in the corresponding coordinates $(x,y)=(x_1, \ldots, x_{n+1}, y_1, \ldots, y_{n+1})$ the symplectic form on $M_{\R} \times T$ is the usual one \eqref{st_sympl} and the projections have the form
\[ \y_{J,L} = (y_{\ell+1}, \ldots, y_{n+1}) \quad \text{and} \quad \x_{J,L} =(x_1, \ldots, x_{\ell}). \]
With respect to these new coordinates $\h$ continues to be the gradient of $F$ and the Hessian of $F$ is still negative definite (Corollary \ref{HessFneg}). Let us first prove that $\gb_{J,L}$ is a local diffeomorphism. Away from the vertices of $E_J$ (i.e. in $U_{J,L}$) we have
\[
              \gb_{J,L}(y) =(y_{\ell+1}, \ldots, y_{n+1}, h_{1}(y), \ldots, h_{\ell}(y) ).
\]
The Hessian of $F$ restricted to a fibre of $\y_{J,L}$ is negative definite, hence
\[ \det ( F_{y_j y_k})_{1 \leq j,k \leq \ell } \neq 0. \]
Therefore $\gb_{J,L}$ is a local diffeomorphism. Let $p$ be a vertex of $E_J$. In the above coordinates, we can assume that $p=(0, \ldots, 0)$. 
Let $(\alpha_1, \ldots, \alpha_n, t)$ be the coordinates on a neighborhood $\tilde U_p$ of $\pi^{-1}(p)$ which satisfy \eqref{coords_blwup}. Then, in these coordinates,
\[
              \gb_{J,L} =(\alpha_{\ell+1}, \ldots, \alpha_{n}, t, h_{1}, \ldots, h_{\ell})
\]
We have that $\gb_{J,L}$ is a local diffeomorphism along $\pi^{-1}(p)$ if and only if the matrix
\begin{equation} \label{blwup_diffh}
 \left( \frac{\partial h_j}{\partial \alpha_k}\right)_{1 \leq j,k \leq \ell}
\end{equation}
is non-degenerate at $t=0$. We have that for all $j,k = 1, \ldots, n$
\[ \frac{\partial h_j}{\partial \alpha_k}|_{t=0} = \lim_{t \rightarrow 0} tF_{y_jy_k}(t\alpha_1, \ldots, t \alpha_n, t) \]
Therefore the matrix 
\[ \left( \frac{\partial h_j}{\partial \alpha_k}\right)_{1 \leq j,k \leq n} \]
is symmetric and negative semidefinite, since it is the limit of symmetric and negative definite matrices. We  know that $(h_1, \ldots, h_n)$ is a diffeomorphism when restricted to $\pi^{-1}(p)$ (see Lemma \ref{smoothExt}), therefore the above matrix must be negative definite. In particular also the matrix in \eqref{blwup_diffh} must be negative definite and hence $\gb_{J,L}$ is a local diffeomorphism along $\pi^{-1}(p)$.

We have that $\gb_{J,L}$ is injective if and only if $\h_{J,L}$ is injective when restricted to a fibre of $\y_{J,L}$. If $y' \in \inter E_J$ then $\y_{J,L}^{-1}(y')$ is convex and thus $\h_{J,L}$ restricted to it must be injective (see \eqref{injectiv_ineq}).  If $y' \in \pi^{-1}(p)$ for some vertex $p$ of $E_J$, then $y'=(\bar \alpha_{\ell+1}, \ldots, \bar \alpha_{n})$ for some fixed $\bar \alpha_j>0$ and 
\begin{equation} \label{preim_proj_blwp}
   \y_{J,L}^{-1}(y') = \{ (\alpha_1, \ldots, \alpha_{\ell}, \bar \alpha_{\ell+1}, \ldots, \bar \alpha_{n}) \, | \, \alpha_j \in \R_{>0}, j=1, \ldots, \ell \}.\end{equation} 
Therefore $\y_{J,L}^{-1}(y')$ is identified with a convex set and, as we have seen above, the differential of $\h_{J,L}$ restricted to this set is a negative definite symmetric matrix. Thus $\h_{J,L}$ is injective, by the same argument as in \eqref{injectiv_ineq}. This concludes the proof that $\gb_{J,L}$ is a diffeomorphism onto its image.

Let us prove the last claim of the proposition.  The identification of the cotangent bundle of $V_J \times T_J$ with $M_{\R} \times T$ given in Lemma \ref{cotangent} identifies the cotangent fibre coordinates with $(y_1, \dots, y_{\ell}, - x_{\ell+1}, \ldots, -x_{n+1})$. The fact that $\gb_{J,L}$ is a diffeomorphism onto $Z_{J,L}$ implies that $\Phi(U_{J,L})$ is the graph of an exact one form over $Z_{J,L}$. Consider the following Legendre transform of $F$:
\begin{equation} \label{legdre_transf}
  G(\y_{J,L}, \x_{J,L}) = -F(y) + \sum_{j=1}^{\ell} x_i y_i.
\end{equation}
Then a standard calculation gives
\[ \begin{split}
        \frac{\partial G}{\partial y_j} & = -h_j, \quad \forall j=\ell+1, \ldots, n+1 \\
         \frac{\partial G}{\partial x_k} & = y_k, \quad \forall k=1, \ldots, \ell \\
   \end{split}
                         \]
which implies that $\Phi( U_{J,L})$ is the graph of $dG$. It is clear that $G$ is well defined and smooth also when $\y_{J,L} \in \pi^{-1}(p)$ for some vertex $p \in E_J$, since all the functions involved in its definition are well defined and smooth. Moreover it is also clear that $dG$ smoothly extends to an embedding. 
\end{proof}

\begin{cor} \label{submers} The map $\h_{J,L}: \tilde U_{J,L} \rightarrow  V_{J}$ is a submersion. The fibres of $\h_{J,L}$ can be identified with open subsets of $\tilde E_{J}$ via the map $\y_{J,L}$. 
\end{cor}

In general $\h_{J,L}$ is not a fibre bundle, since it may happen that some fibres are connected, or compact, and some are not, but we now restrict to a case when $\h_{J,L}$ is a fibre bundle. First let us define certain neighborhoods of the faces $E_J$. Given $k \notin J$, let 
\[ \mathcal W^+_{J,k} = \bigcap_{j \in J} \Delta_{jk} \]
and as usual we define $\mathcal W^-_{J,k}$, $\mathcal W_{J,k}$ and its lift $\tilde{\mathcal{W}}_{J,k}$. Clearly we have that 
\[ \tilde{\mathcal W}_J = \bigcap_{k \notin J} \tilde{\mathcal W}_{J,k}. \]
Similarly define neighborhoods of $\Gamma_J$:
\[ \mathcal V_{J,k} =  \bigcap_{j \in J} D_{jk}.\]
We have 
\[ \mathcal V_{J} =  \bigcap_{k \notin J} \mathcal V_{J,k}.\]
Moreover, as in Lemma \ref{hNbhd}, we have that $\h(\tilde{\mathcal{W}}_{J,k})= \mathcal V_{J,k} \cap \mathcal H$.



\begin{cor} \label{fbr_bndl} Given a face $E_J$ of $C$ and $k \notin J$, consider the projections $\y_{J,k}$ and $\x_{J,k}$ as in Example \ref{proj_st1} and the associated maps $\h_{J,k}$ and $\gb_{J,k}$. Then 
\begin{equation} \label{g_fbr_bnd}
        \gb_{J,k}: \inter \tilde{\mathcal{W}}_{J,k} \rightarrow \tilde E_{J} \times \inter \Gamma_J
 \end{equation}
is a diffeomorphism. In particular $\Phi (\inter \tilde{\mathcal{W}}_{J,k})$ is the graph of the differential of a Legendre transform of $F$ defined on $\tilde E_{J} \times \inter \Gamma_J$. Moreover $\h_{J,k} : \inter \tilde{\mathcal{W}}_{J,k} \rightarrow  \inter \Gamma_{J}$ is a trivial fibre bundle over $\inter \Gamma_J$ with fibre $\tilde E_J$.
\end{cor}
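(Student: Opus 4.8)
The plan is to derive the corollary from Proposition~\ref{faceproj} by pinning down the image of $\gb_{J,k}$ on the star neighbourhood; this is the only point that goes beyond~\ref{faceproj}. First I would use Lemmas~\ref{eq_action} and~\ref{tnbhd} — together with the evident analogues $R^*_l(\mathcal V_{J,k}) = \mathcal V_{R_lJ, R_lk}$ and $R_l(\mathcal W_{J,k}) = \mathcal W_{R_lJ,R_lk}$, immediate from the definitions — to reduce to the model case $J = \{1, \ldots, \ell\}$, $k=0$ in the coordinates of Example~\ref{proj_st1}, so that $\gb_{J,0} = (y_{\ell+1}, \ldots, y_{n+1}, h_1, \ldots, h_\ell)$. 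Because $U_{J,0} = \inter C$, the set $\inter\tilde{\mathcal W}_{J,0}$ lies in $\tilde U_{J,0}$, so Proposition~\ref{faceproj} already gives that $\gb_{J,0}$ restricts to a diffeomorphism of $\inter\tilde{\mathcal W}_{J,0}$ onto an \emph{open} set $W \subseteq \tilde E_J \times V_J$ and that $\Phi(\inter\tilde{\mathcal W}_{J,0})$ is the graph of the differential of the Legendre transform~\eqref{legdre_transf} over $W$. Thus the whole corollary reduces to the identity $W = \tilde E_J \times \inter\Gamma_J$: the graph statement is then immediate, and since in these coordinates $\h_{J,0} = \x_{J,0}\circ\h$ is nothing but the second projection $\tilde E_J \times \inter\Gamma_J \to \inter\Gamma_J$ read through $\gb_{J,0}$, the last assertion that $\h_{J,0}$ is a trivial bundle with fibre $\tilde E_J$ follows at once.

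For the inclusion $W \subseteq \tilde E_J \times \inter\Gamma_J$ I would invoke the identity $\h(\tilde{\mathcal W}_{J,0}) = \mathcal V_{J,0}\cap\mathcal H$ noted just before the corollary. In the model coordinates $\mathcal V_{J,0} = \{x_1\geq 0, \ldots, x_\ell\geq 0\}$ and $\x_{J,0}(x) = (x_1, \ldots, x_\ell)$, so $\h_{J,0}(\tilde{\mathcal W}_{J,0}) = \x_{J,0}(\mathcal V_{J,0}\cap\mathcal H) \subseteq \Gamma_J$; passing to relative interiors the second component of $\gb_{J,0}$ lands in $\inter\Gamma_J$, while the first component $\y_{J,0}$ takes values in $\tilde E_J$ by construction.

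The substance of the proof is the reverse inclusion. Since $\tilde E_J$ and $\inter\Gamma_J$ are connected and $W$ is open in the connected manifold $\tilde E_J\times\inter\Gamma_J$, it is enough to prove that $\gb_{J,0}$ is \emph{proper} as a map into $\tilde E_J\times\inter\Gamma_J$, for then $W$ is also closed and hence everything. Concretely, I would take a sequence $q_m\in\inter\tilde{\mathcal W}_{J,0}$ with $\gb_{J,0}(q_m)$ confined to a compact subset of $\tilde E_J\times\inter\Gamma_J$, pass to a limit $q_\infty$ in the closure of $\tilde{\mathcal W}_{J,0}$ inside the blow-up of the \emph{closed} coamoeba $\bar C$ (a compact manifold), and rule out $q_\infty$ lying on the boundary. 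The boundary divides into the interior walls $\delta_{j0}$, $j\in J$, on which formula~\eqref{fy} gives $h_j = 0$ so that $\x_{J,0}\circ\h\in\partial\Gamma_J$, and the strata on $\partial C$, along which Lemma~\ref{hnear_faces} and Corollary~\ref{h_near_bndry} force some coordinate $h_j$ with $j\leq\ell$ to tend to $0$, $+\infty$ or $-\infty$; in every case $\gb_{J,0}(q_m)$ escapes every compact subset of $\tilde E_J\times\inter\Gamma_J$, contradicting the assumption. Hence $q_\infty$ is interior, $\gb_{J,0}$ is proper, and $W = \tilde E_J\times\inter\Gamma_J$.

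The delicate point — and the step I expect to absorb most of the work — is the correct handling of the blow-up when listing the boundary strata. The exceptional fibres $\pi^{-1}(p_i)$ over the vertices $p_i$ of $E_J$ (those with $i>\ell$) lie in the \emph{interior} of $\tilde{\mathcal W}_{J,0}$ and map to interior points: their images lie on the pieces $\mathcal S_i$ of $\partial\mathcal H$ whose $\x_{J,0}$-projection is interior to $\Gamma_J$. These must be separated cleanly from the genuine boundary approaches above before Lemma~\ref{hnear_faces} is applied, and verifying that $\y_{J,0}$ maps $\inter\tilde{\mathcal W}_{J,0}$ onto all of $\tilde E_J$ (including these exceptional fibres) is part of the same bookkeeping. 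A tidy alternative that isolates this difficulty fibrewise is to fix $\bar y'\in\tilde E_J$ and argue that $\h_{J,0}$ restricted to the convex fibre $\y_{J,0}^{-1}(\bar y')$ is an injective local diffeomorphism (by Proposition~\ref{faceproj} and the inequality~\eqref{injectiv_ineq}) which is proper onto $\inter\Gamma_J$, hence a diffeomorphism onto it by invariance of domain; this reduces the boundary analysis to a single fibre at a time.
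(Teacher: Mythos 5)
Your overall strategy is sound and genuinely different from the paper's: you reduce everything, via Proposition~\ref{faceproj}, to the single identity $W=\tilde E_J\times\inter\Gamma_J$ and prove it by properness plus an open/closed/connectedness argument, whereas the paper proves surjectivity fibre by fibre (over each $y'\in\tilde E_J$ it shows that $\h_{J,0}$ maps the convex fibre onto $\inter\Gamma_J$ by a boundary-behaviour and continuity argument, using the explicit formula~\eqref{ht0} over the exceptional fibres). However, your case analysis for properness has a genuine error. You claim that along every approach to a stratum of $\partial C$, Lemma~\ref{hnear_faces} and Corollary~\ref{h_near_bndry} force some $h_j$ with $j\in J$ to tend to $0$ or $\pm\infty$. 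This fails for the strata $\inter E_{J'}$ with $\{0\}\cup J\subseteq J'$: there Lemma~\ref{hnear_faces} controls only $h_i$ for $i\notin J'$ (and such $i$ are \emph{not} in $J$) together with the differences $h_j-h_i$, so for $j\in J$ the quantity $h_j=h_i+(h_j-h_i)$ is of the indeterminate form $-\infty+\infty$, and no cited statement pins it down. It really can stay bounded: take $n=2$, $J=\{1\}$, $J'=\{0,1\}$, and approach the point $(0,s,\pi/2-s)\in\inter E_{J'}$ along the path $y_1=\epsilon$, $\ \tfrac{\pi}{2}-(2y_1+y_2+y_3)=t\,\epsilon^{4/3}$, which lies in $\inter\tilde{\mathcal W}_{J,0}$. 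Then in formula~\eqref{fy} the numerator factor $\cos(2y_1+y_2+y_3)\approx t\epsilon^{4/3}$ while the denominator is $\approx(\epsilon^2\sin s\cos s)^{2/3}$, so $h_1$ converges to a \emph{finite positive} limit proportional to $t$. Hence $\h_{J,0}(q_m)$ stays in a compact subset of $\inter\Gamma_J$ while $q_m$ leaves every compact subset of $\inter\tilde{\mathcal W}_{J,0}$: your stated dichotomy is false, not merely unproved.

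Properness itself (and hence your conclusion) is still true, because in exactly this situation the escape occurs in the \emph{first} factor: the limit of $\y_{J,0}(q_m)$ (in the example, $(y_2,y_3)\to(s,\pi/2-s)$ with $y_2+y_3=\pi/2$) lies on a positive-dimensional proper face of the coamoeba $E_J$, which belongs to $\bar E_J$ but not to $E_J$, hence not to $\tilde E_J$; so $\y_{J,0}(q_m)$ leaves every compact subset of $\tilde E_J$. Your proof needs this third escape mechanism added to the two you list; the issue is not part of the blow-up bookkeeping you flag, since it arises at non-vertex strata of $\partial C$. Note also that the fibrewise alternative you sketch in your last paragraph --- which is essentially the paper's actual proof --- sidesteps the problem automatically: inside a fibre $\y_{J,0}^{-1}(y')$ with $y'\in\inter E_J$, the defining inequalities $|2y_j+\sum_{k\neq j}y_k|<\pi/2$ of $\inter\tilde{\mathcal W}_{J,0}$ prevent $\sum_k y_k$ from reaching $\pi/2$, so such a fibre can only approach faces $E_S$ with $\emptyset\neq S\subseteq J$, where Lemma~\ref{hnear_faces} does give $h_j\to+\infty$ for $j\in S$ and $h_j\to 0$ for $j\in J\setminus S$, and fibrewise properness onto $\inter\Gamma_J$ holds.
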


\begin{proof} Using the symmetries we can assume that $J = \{ 1, \ldots, \ell \}$ and $k=0$. Then $\h_{J,0}$ and $\gb_{J,0}$ are as in \eqref{g_special_case} and $\inter \tilde{\mathcal{W}}_{J,0}$ is defined by the following inequalities (see \eqref{inqDelta1})
\[ 
    \inter \tilde{\mathcal{W}}_{J,0}: \quad  |2y_j + \sum_{k\neq j} y_k| < \frac{\pi}{2}, \quad j=1, \ldots, \ell. 
\]
In particular for all $j=1, \ldots, \ell$ we have that $h_j > 0$ on $\inter \tilde{\mathcal{W}}_{J,0}$. Therefore $\h_{J,0}(\inter \tilde{\mathcal{W}}_{J,0}) \subseteq \inter \Gamma_{J}$. We only need to show that $\gb_{J,0}$ as in \eqref{g_fbr_bnd} is surjective. This holds if and only if $\h_{J,0}$ restricted to $\inter \tilde{\mathcal{W}}_{J,0} \cap \y_{J,0}^{-1}(y')$ surjects onto $\inter \Gamma_J$ for all $y' \in \tilde E_J$. If $y' \in \inter E_J$, we can view $(y_1, \ldots, y_{\ell})$ as coordinates on $\y_{J,0}^{-1}(y')$. For $j=1, \ldots, \ell$, we have that $h_j(y) = 0$ on the points $y$ of the boundary of $\inter \tilde{\mathcal{W}}_{J,0} \cap \y_{J,0}^{-1}(y')$ which satisfy
\[ |2y_j + \sum_{k\neq j} y_k| = \frac{\pi}{2}. \]
On the other hand it follows from Lemma \ref{hnear_faces} that if $\{ q_{k} \}$ is a sequence of points of $\inter \tilde{\mathcal{W}}_{J,0} \cap \y_{J,0}^{-1}(y')$ which converges to the boundary of $C$ then
\[ \lim h_j(q_k) = + \infty, \]
for all $j=1, \ldots, \ell$. Therefore, by continuity, for any $x \in \inter \Gamma_J$, there must be a (unique) $y \in \inter \tilde{\mathcal{W}}_{J,0} \cap \y_{J,0}^{-1}(y')$ such that $\h_{J,0}(y) = x$. 

If $y' \in \pi^{-1}(p)$ for some vertex $p$ of $\tilde E_J$ then $\y_{J,0}^{-1}(y')$ is as in \eqref{preim_proj_blwp}. Here we have the explicit description of $\h$ (hence of $\h_{J,0}$) given in \eqref{ht0} and we can check directly that $\h_{J,0}$ surjects onto $\inter{\Gamma_J}$. \end{proof}

\subsection{Lagrangian pairs of pants are homeomorphic to the PL-lift of $\Gamma$}
Given a Lagrangian pair of pants $\Phi: \tilde C \rightarrow M_{\R} \times T$ we prove
\begin{prop}  \label{pairpants_PLft} The Lagrangian pair of pants $\Phi(\tilde C)$ is homeomorphic to the $PL$-lift $\hat \Gamma$ of $\Gamma$. 
\end{prop}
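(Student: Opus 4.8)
Since $\Phi$ is a Lagrangian embedding, it is a homeomorphism of $\tilde C$ onto $L=\Phi(\tilde C)$, so it suffices to produce a homeomorphism $\Theta\colon \tilde C \to \hat\Gamma$. Both spaces carry compatible decompositions indexed by the subsets $J\subsetneq\{0,\dots,n+1\}$, matched by the inclusion-reversing duality $E_J \leftrightarrow \Gamma_J$: on the source the closures of the star neighborhoods $\tilde{\mathcal W}_J$ of the faces $E_J$, and on the target the pieces $\hat\Gamma_J=\Gamma_J\times E_J$. The plan is to define $\Theta$ over each cone $\Gamma_J$ from the product models provided by Corollary \ref{fbr_bndl}, to glue these over the overlaps, and to run the argument by induction on the dimension $n$, the base case $n=0$ being the identification of the circle $C=\R/\pi\Z$ with $\hat\Gamma$.

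For the local models I would use Corollary \ref{fbr_bndl}: for $1\le|J|\le n$ and $k\notin J$ the map $\gb_{J,k}$ identifies $\inter\tilde{\mathcal W}_{J,k}$ with the trivial bundle $\tilde E_J\times\inter\Gamma_J$. On the PL side, the part of $\hat\Gamma$ lying over a neighborhood of $\inter\Gamma_J$ is, transverse to the $\Gamma_J$-direction, exactly the PL-lift of the lower-dimensional tropical hyperplane dual to the coamoeba $E_J$, so it has the form (that PL-lift)$\,\times\inter\Gamma_J$. For the top cones $|J|=n$ the fibre $E_J$ is a $1$-dimensional coamoeba, its blow-up is trivial (the relevant projective space is a point), hence $\tilde E_J=E_J$ and the two product models literally coincide; this matches the ends of $L$ with the ends $\hat\Gamma_J$ of the PL-lift. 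For $|J|<n$ the vertices of $E_J$ are genuine singular points of the coamoeba, and the blow-up $\tilde E_J$ is itself a lower-dimensional Lagrangian pair of pants; by the inductive hypothesis it is homeomorphic to the PL-lift dual to $E_J$. Thus over each $\inter\Gamma_J$ the blow-up resolution on the $\tilde C$ side and the cone-thickening resolution on the $\hat\Gamma$ side are identified by a fibrewise homeomorphism.

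To assemble a global $\Theta$ I would begin at the centre: by Proposition \ref{imh}, $\h$ restricts to a diffeomorphism of each open simplex $\inter C^{\pm}$ onto $\inter\mathcal H$, and I would send $\inter C^{\pm}$ to $\{0\}\times\inter C^{\pm}\subset\hat\Gamma_\emptyset$ by retaining the torus coordinate and collapsing the $M_{\R}$-spread to the origin. Near the faces $E_J$ the map must convert the blow-up directions of $\tilde C$ into the cone directions of $\hat\Gamma$, which is precisely what the identifications $\gb_{J,k}$ accomplish; I would interpolate between the central map and the end models using a partition of unity subordinate to the cover $\{\tilde{\mathcal W}_J\}$. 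The $\mathcal G$-equivariance of $\h$ and of the maps $\gb_{J,k}$ (Lemmas \ref{eq_action} and \ref{tnbhd}) lets me define all the pieces consistently from a single model, say the one over $\Gamma_{J_0}$.

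I expect the main obstacle to be the gluing rather than the local models. One must verify that the product identifications of Corollary \ref{fbr_bndl} for different indices $J$, $k$ agree, after the prescribed fibrewise homeomorphisms, on the overlaps $\tilde{\mathcal W}_J\cap\tilde{\mathcal W}_{J'}$, and that the assembled map extends continuously across strata of every dimension, most delicately across the deepest stratum over $\Gamma_\emptyset=\{0\}$, where the whole coamoeba and all its singular vertices are concentrated. The simultaneous graph property, namely that $\Phi(\tilde U_{J,L})$ is a graph over each $\tilde E_J\times\inter\Gamma_J$ via a common Legendre transform, is what makes the transition checks consistent, while the bijectivity of $\h$ from Proposition \ref{imh} guarantees injectivity of the assembled map; the boundary asymptotics of $\h$ in Lemma \ref{hnear_faces} and Corollary \ref{h_near_bndry} are the inputs needed to confirm continuity of $\Theta$ and of $\Theta^{-1}$ where cells of different dimensions meet. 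Since Corollary \ref{fbr_bndl} is available only for $n=1,2$, this argument establishes the Proposition in those dimensions.
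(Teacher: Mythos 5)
Your local analysis is essentially sound: the identification of the ends via Corollary \ref{fbr_bndl}, the observation that $\tilde E_J = E_J$ for one-dimensional faces while for deeper faces $\tilde E_J$ is a lower-dimensional pair of pants matching, by induction, the local product structure of $\hat\Gamma$ along $\inter \Gamma_J$ --- all of this is correct, and the restriction to $n=1,2$ matches the paper's. But there is a genuine gap exactly where you locate it: the gluing. You never carry it out, and the mechanism you propose --- interpolating between the central map and the end models with a partition of unity subordinate to $\{\tilde{\mathcal W}_J\}$ --- does not make sense as stated. A partition of unity lets you average maps into a vector space or a single convex cell, but your central map lands in $\hat\Gamma_{\emptyset} = \{0\}\times C$ while the end model over $\Gamma_J$ lands in $\hat\Gamma_J = \Gamma_J \times E_J$; a convex combination of two such points, taken in the ambient $M_{\R} \times T$, in general lies in neither cell, and $\hat\Gamma$ is not convex, so the interpolated map exits the target. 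Keeping the interpolation inside $\hat\Gamma$ would require a globally consistent radial-plus-coamoeba coordinate system compatible with all the trivializations $\gb_{J,k}$ simultaneously, and verifying that the different Legendre-transform trivializations agree on the overlaps $\tilde{\mathcal W}_J \cap \tilde{\mathcal W}_{J'}$ is precisely the content that is missing; injectivity and continuity across the deepest stratum are likewise asserted rather than proved.

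The paper's proof avoids this difficulty by never gluing maps at all. For $n=2$ it uses Proposition \ref{imh} to work in the image, decomposing $\mathcal H = \h(\tilde C)$ into explicit closed regions $H_J$: a central $G^*$-invariant simplex $H_{\emptyset}= \conv \{ q_0, \ldots, q_3 \}$ with vertices on the boundary surfaces $\mathcal S_k$; for $|J|=1$ a one-parameter family of triangles $K_{J,t}$ swept along $\Gamma_J$; and for $|J|=2$ a family of segments $K_{J,x}$ over a region $Q_J \subset \Gamma_J$ cut out by explicit curves. One then checks that $\h^{-1}(H_J) \cong \Gamma_J \times \bar E_J \cong \hat\Gamma_J$ and that the intersection pattern $\h^{-1}(H_J) \cap \h^{-1}(H_{J'}) \cong \hat\Gamma_J \cap \hat\Gamma_{J'}$ is the right one, so the homeomorphism is assembled cell by cell with compatibility on overlaps built into the construction (pieces meet only along shared boundary faces, not on open sets). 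If you want to rescue your approach, the shortest route is to replace the partition-of-unity step by such an explicit decomposition into closed pieces, so that the product identifications coming from Corollary \ref{fbr_bndl} only need to be compared along boundary faces rather than interpolated over open overlaps.
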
 
\begin{proof} The case $n=1$ is obvious, since they are both homeomorphic to a sphere with three punctures. For simplicity, we only give a proof for the case $n=2$, where we use Proposition \ref{imh}.  In the definition of $\hat \Gamma$ given in \S \ref{plift} it was convenient to define the lifts $\hat \Gamma_J$ of the cones $\Gamma_J$ as in \eqref{ends_lift}, using the faces $E_J$. We might as well define $\hat \Gamma_J$ by replacing, in \eqref{ends_lift}, $E_J$ with its closure $\bar E_J$. In this proof we will adopt this latter definition of $\hat \Gamma_J$. The idea is to find a decomposition
\[ \mathcal H = \bigcup_{0 \leq |J| \leq n} H_J \]
so that for every $J$, $\h^{-1}(H_J)$ is homeomorphic to $\hat \Gamma_J$ and for every pair $J_1$, $J_2$ with $J_1 \subset J_2$, $\h^{-1}(H_{J_1}) \cap \h^{-1}(H_{J_2})$ is homeomorphic to $\hat \Gamma_{J_1} \cap \hat \Gamma_{J_2}$.  We will construct a subdivision of this type which is also $G^*$ invariant, i.e. such that
\begin{equation} \label{gsimm_decomp}
    R^*_k  H_{J} = H_{R_kJ} 
\end{equation} 
for all transformations $R^*_k$. 
Let us first define $H_{\emptyset}$. Consider the point
\[ q_0 = \left( \frac{1}{3}, \frac{1}{3}, \frac{1}{3} \right),\] 
which clearly lies on $\mathcal S_0$. Define 
\[ q_k = R^{\ast}_k q_0 \]
so that $q_k \in \mathcal S_k$. Let
\[  H_{\emptyset}= \conv \{ q_0, \ldots, q_3 \}. \]
It is easy to see that $H_{\emptyset}$ is a three dimensional, $G^*$ invariant, simplex contained in $\mathcal H$. Moreover $\h^{-1}(H_{\emptyset})$ is homeomorphic to the closure of the Lagrangian coamoeba $\bar C$ and hence to $\hat \Gamma_{\emptyset}$. 

There is a one to one inclusion reversing correspondence between the faces of $H_{\emptyset}$ and the cones $\Gamma_J$, namely $\Gamma_J$ corresponds to the $(3-|J|)$-dimensional face given by
\begin{equation} \label{face_hj} 
         \conv \{ q_k \}_{k \notin J}.
\end{equation}
Let $L_J$ be the $|J|$-dimensional vector subspace of $N_{\R}$ such that $L_{J}^{\perp}$ is parallel to this face. Clearly $L_{J}^{\perp}$ is transverse to $\Gamma_J$, moreover if $J_1 \subset J_2$ then $L_{J_2}^{\perp} \subset L_{J_1}^{\perp}$ therefore the collection $\{ L_J^{\perp} \} $ defines a system of projections $\{ \x_J \}$ (in the sense of Definition \ref{projFace2}) such that if $J_1 \subset J_2$ then 
\[ \x_{J_1} \circ \x_{J_2} = \x_{J_1}. \]

Let us now define $H_{J}$ when $|J| = 1$. Assume that $J=\{ 1 \}$. In this case 
\[ \x_J(x) = x_1 - \frac{1}{3}x_2 - \frac{1}{3}x_3. \]
Given $t \in \R$, let 
\[ x_t = tu_1 \]
and consider the planes $\x_J^{-1}(x_t)$. These planes are invariant with respect to the transformations $R^*_k$ for all $k \neq 1$, i.e. those that leave $\Gamma_{J}$ fixed. For $t=1/9$, this is the plane containing the face \eqref{face_hj} of $H_{\emptyset}$.  Let us consider the sets 
\[ \x_J^{-1}(x_t) \cap \mathcal H, \quad \text{for} \ t \geq 1/9.\]
\begin{figure}[!ht] 
\begin{center}
\includegraphics{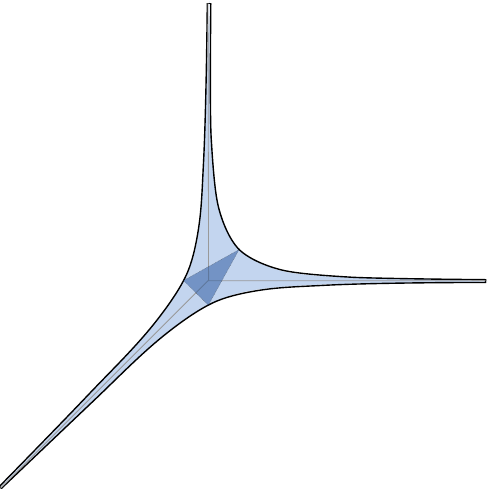}
\caption{The intersection of $\mathcal H$ with the plane $\x_J^{-1}(x_t)$ and the triangle $K_{J,t}$.}
 \label{pants_section_triangle}
\end{center}
\end{figure}
It can be checked that these sets are shaped as in Figure \ref{pants_section_triangle}, i.e. they are similar to the two dimensional version of $\mathcal H$. We denote by $K_{J,t} \subset \x_J^{-1}(x_t) \cap \mathcal H$ the darker triangle in Figure \ref{pants_section_triangle}. It is constructed as follows. One of its vertices is the point 
\[ q_{0,t} = \x_J^{-1}(x_t) \cap \mathcal S_0 \cap \{ x_2 = x_3 \} \]
The other two are
\[ q_{k,t} = R^*_k(q_{0,t}) \]
with $k=2,3$. It is easy to compute that 
\[ q_{0,t}= \left( \frac{2}{3} z(t) + t, z(t), z(t) \right), \]
where $z(t)$ is the unique positive solution of the equation $9z^2(2z+3t)-1=0$. Now,  for $J = \{ 1 \}$, we define 
\[ H_{J} = \bigcup_{t \geq 1/9} K_{J,t}. \]
For the other cones with $|J| =1$ define $H_{J}$ by using the symmetry \eqref{gsimm_decomp}. Clearly $K_{J, 1/9}$ is the face \eqref{face_hj} of $H_{\emptyset}$. For all $t \geq 1/9$, $\h^{-1}(K_{J,t})$ is homeomorphic to the closure of the face $E_{J}$ of $C$. Therefore 
\[ \h^{-1}(H_{J}) \cong \Gamma_J \times \bar E_J \cong \hat \Gamma_J.\]
Moreover 
\[ \h^{-1}(H_{J}) \cap \h^{-1}(H_{\emptyset}) \cong \bar E_J \cong \hat \Gamma_J \cap \hat \Gamma_{\emptyset}.\]
It remains to construct $H_J$ when $|J| =2$. Let $J=\{1,2 \}$. Given $J_1=\{ 1 \} \subset J$, for every $t$ consider the midpoint of the edge of $K_{J_1,t}$ whose vertices are $q_{0,t}$ and $q_{3,t}$. It can be easily checked that this point lies on $\Gamma_J$ and as $t$ varies in the interval $[1/9, + \infty)$ it traces, inside $\Gamma_J$, a curve, which we denote $\tau_1$, whose equation is
\[\tau_1: \quad  x_1 = \frac{1}{108x_2^2} - x_2, \quad 0 \leq x_2 \leq 1/6. \]
Similarly we can consider $J_2= \{ 2 \} \subset J$ and the analogous curve $\tau_2$, whose equation is obtained from the equation of $\tau_1$ by exchanging $x_1$ and $x_2$. The curve 
\[ \tau_J = \tau_1 \cup \tau_2 \]
cuts $\Gamma_J$ in two connected components: one which contains $\Gamma_{J_1}$ and $\Gamma_{J_2}$ and the other which does not. Denote by $Q_{J}$ the closure of latter component (see Figure \ref{region_in_cone}). 
\begin{figure}[!ht] 
\begin{center}
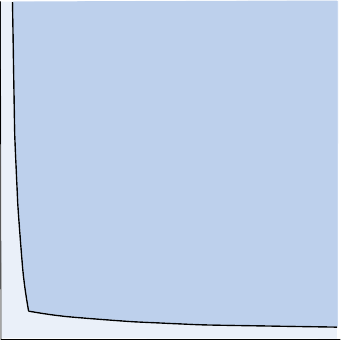
\caption{} \label{region_in_cone}
\end{center}
\end{figure}
It is easy to see that for every $x \in Q_J$, the set 
\[ K_{J,x} = \x_{J}^{-1}(x) \cap \mathcal H \]
is a closed segment with vertices on the surfaces $\mathcal S_0$ and $\mathcal S_3$. Therefore $\h^{-1}(K_{J,x})$ is homeomorphic to a circle, i.e. to $E_J$. Moreover, by construction, if $x \in \tau_1$, then $K_{J,x}$ coincides, for some $t$, with the edge of $K_{J_1,t}$ whose vertices are $q_{0,t}$ and $q_{3,t}$. Similarly if $x \in \tau_2$. In particular, if $x=(1/6, 1/6, 0)= \tau_1 \cap \tau_2$ then $K_{J,x}$ coincides with the edge \eqref{face_hj} of $H_{\emptyset}$. Define 
\[ H_J = \bigcup_{x \in Q_J} K_{J,x}. \]
Then 
\[ \h^{-1}(H_J) \cong Q_J \times E_J \cong \Gamma_J \times E_J \cong \hat \Gamma_J. \]
Moreover, the above observations imply that for all $J'$ with $J' \subset J$ 
\[ \h^{-1}(H_{J}) \cap \h^{-1}(H_{J'}) \cong \Gamma_{J'} \times E_J \cong \hat \Gamma_J \cap \hat \Gamma_{J'}.\]
This concludes the proof. 
\end{proof}

\subsection{The Maslov class of a Lagrangian pair of pants}
Let us recall the definition of the Maslov class of an orientable Lagrangian submanifold $L$ in a Calabi-Yau manifold $X$, with $\dim X = n+1$.  If $\Omega$ is a nowhere vanishing holomorphic $(n+1)$-form on $X$ and $\vol_{L}$ is a volume form on $L$, then there exists a smooth function $\theta: L \rightarrow S^1$ and a positive function $\psi: L \rightarrow \R$ such that 
\[ \Omega|_{L}= \psi e^{i\pi \theta} \vol_{L}. \]
Then $d \theta$ is a closed one form on $L$ and its class in $H^{1}(L, \R)$ is the Maslov class of $L$. In particular $L$ has vanishing Maslov class if and only if we can lift $\theta$ to be an $\R$ valued function. Recall also that when $\theta$ is constant, then $L$ is special Lagrangian of phase $\theta$. 

In our situation $M_{\R} \times T$ is Calabi-Yau and, with respect to complex coordinates $z_j = y_j + i x_j$,  a holomorphic $n+1$-form is given by
\[ \Omega = dz_1 \wedge \ldots \wedge dz_{n+1}. \] 

\begin{ex}
If $\Gamma_J$ is an $n$-dimensional cone of $\Gamma$, then the lift $\hat \Gamma_J$ of $\Gamma$ is special Lagrangian of phase $\theta = \pm 1/2$ (the sign depending on the orientation of $\hat \Gamma_J$). 
\end{ex}

We have the following
\begin{prop} \label{maslov_pants} Lagrangian pairs of pants have vanishing Maslov class.
\end{prop}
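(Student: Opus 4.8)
The plan is to compute the phase (grading) function of $L$ explicitly and to show it lifts to an $\R$-valued function. Since $L=\Phi(\tilde C)$ is Lagrangian in the Calabi-Yau $M_{\R}\times T$, the restriction $\Omega|_L$ is nowhere vanishing; hence, having fixed an orientation and a volume form on $L$, there is a globally defined smooth phase $\theta\colon L\to S^1$ with $\Omega|_L=\psi\,e^{i\pi\theta}\,\vol_L$, $\psi>0$, and the Maslov class is $[d\theta]\in H^1(L;\R)$. It vanishes precisely when $\theta$ lifts to an $\R$-valued function, equivalently when the winding of $\theta$ is zero around each loop in a generating set of $H_1(L;\Z)$. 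Over the two graph charts $\inter C^\pm$ the parametrization $y\mapsto(y,\h(y))$, $z_j=y_j+ih_j(y)$, gives $\Omega|_L=\det(\id+i\,\hes F)\,dy_1\wedge\cdots\wedge dy_{n+1}$, so that $\pi\theta\equiv\arg\det(\id+i\,\hes F)$ there.

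First I would use the sign of the Hessian to pin down $\theta$ on each chart. By Corollary \ref{HessFneg} the Hessian of $F$ is negative definite on $\inter C^+$; writing its eigenvalues as $\lambda_1,\dots,\lambda_{n+1}<0$ gives $\det(\id+i\,\hes F)=\prod_j(1+i\lambda_j)$, whose argument is a sum of principal arctangents and hence lies in $(-(n+1)\pi/2,0)$. Thus $\theta$ admits a canonical real lift with values in $(-(n+1)/2,0)$ on $\inter C^+$. The involution $\iota$ of Lemma \ref{invo} acts on $L$ as $z\mapsto-\bar z$, i.e. anti-holomorphically, and exchanges $C^+$ with $C^-$; differentiating $\h(\iota y)=\h(y)$ shows $\hes F$ is positive definite on $\inter C^-$, so there $\theta\in(0,(n+1)/2)$. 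In particular $\theta$ never winds within either open chart, and all of the winding of $\theta$ is concentrated in how these charts are glued across the exceptional divisors $\pi^{-1}(p_k)$.

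Next I would reduce to the ends. Since $L$ is homeomorphic to the $PL$-pair of pants $\hat\Gamma$ (Proposition \ref{pairpants_PLft}), $H_1(L;\Z)$ is generated by the loops encircling its ends, namely the fibre circles $E_J$ with $|J|=n$ lying over points of $\inter\Gamma_J$. By Lemma \ref{eq_action} the group $\mathcal G$ acts on $L$ and acts transitively on these ends, so it suffices to compute the winding of $\theta$ around one standard loop, say $J=\{1,\dots,n\}$ and $k=0$. By Corollary \ref{fbr_bndl}, over this end $L$ is the graph of the differential of a Legendre transform of $F$ on $\tilde E_J\times\inter\Gamma_J$, and $\h_{J,0}$ is a trivial bundle with fibre $\tilde E_J$; the generating loop is the fibre $\tilde E_J\times\{x_0\}$.

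Finally, along this loop the base coordinates $x_1,\dots,x_n\in\inter\Gamma_J$ are fixed, and the variation of $\theta$ is governed by the $1$-dimensional Lagrangian pair of pants obtained as the blow-up $\tilde E_J$ of the $1$-dimensional coamoeba $E_J$. For that model the phase is a single principal value $\arg(1+iF_1'')$ with values in $(-\pi/2,\pi/2)$, which has winding number zero, so the loop has Maslov index $0$. I expect this last step to be the main obstacle: one must verify that in the Legendre/product coordinates near the end the complex volume form splits so that $\theta$ is the sum of a fibre phase and a base phase, with the base phase constant along the loop — equivalently, that crossing each of the two exceptional points of $\tilde E_J$ produces no net jump in $\theta$ (the factor $t^{n'}$ of the local model degenerating to $t^0=1$ in the $1$-dimensional case). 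Granting this, every generator of $H_1(L;\Z)$ has zero Maslov index, so $\theta$ lifts to an $\R$-valued function and the Maslov class of $L$ vanishes.
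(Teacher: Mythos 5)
Your overall strategy coincides with the paper's: identify the generators of $H_1(L,\Z)$ with fibre circles of the maps $\h_{J,0}$ over the ends, and use Corollary \ref{fbr_bndl} to describe $L$ near each end as the graph of $d G$, where $G$ is a Legendre transform of $F$ defined on $\tilde E_J \times \inter \Gamma_J$. The chart computation on $\inter C^{\pm}$ and the remark that all winding is concentrated at the exceptional divisors are correct but, as it turns out, not needed.

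The genuine gap is exactly the step you flag as ``the main obstacle'', and the point is that no fibre/base splitting of the phase, and no reduction to the $1$-dimensional model, is required: the graph description over the end already finishes the argument by the same mechanism you used on $\inter C^{+}$. The end $\inter \hat\Gamma_J = \tilde E_J \times \inter\Gamma_J$ is \emph{special} Lagrangian (constant phase $\theta_0$), and via the identification of Lemma \ref{cotangent} the piece $\Phi(\inter\tilde{\mathcal W}_{J,0})$ is the graph of $dG$ over it, with $G$ smooth on the whole of $Z_{J,0}$ including over the exceptional points (Proposition \ref{faceproj}). Writing $\Omega$ in the base/fibre coordinates of Lemma \ref{cotangent}, the restriction of $\Omega$ to this graph is
\[
\Omega|_{\Phi(\inter\tilde{\mathcal W}_{J,0})} = c \, \det\left(\id - i\,\hes G\right) \, dq_1\wedge\cdots\wedge dq_{n+1},
\]
where $c$ is the constant unit complex number recording the phase $\theta_0$ of $\hat\Gamma_J$ and $q$ are the base coordinates. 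Since $\hes G$ is real symmetric, $\det(\id - i\,\hes G) = \prod_j (1 - i\mu_j)$ with $\mu_j$ real, so every factor lies in the open right half-plane and $\arg\det(\id - i\,\hes G) = -\sum_j \arctan \mu_j$ is a globally defined continuous $\R$-valued function on the \emph{entire} end, exceptional divisors included. Hence $d\theta$ is exact on each end piece, and every fibre circle (which lies inside such a piece) has zero winding; this is precisely how the paper concludes. Your proposed route instead leaves unproven the claim that $\theta$ splits as fibre phase plus base phase, which is not literally true pointwise (the determinant of $\id - i\,\hes G$ does not factor unless $\hes G$ is block diagonal), so as stated that step would not go through.

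A secondary issue: your reduction to a single end via the $\mathcal G$-action is not immediate, because the symplectomorphisms $\mathcal R_k$ are \emph{not} holomorphic (the linear part $(x,y)\mapsto (R_k^* x, (R_k^*)^t y)$ does not commute with the complex structure $z_j = y_j + i x_j$), so they do not preserve $\Omega$ or the phase function on the nose. One can repair this with the standard fact that the Maslov index of a loop of Lagrangian planes is invariant under the action of the connected group $Sp(2n+2,\R)$ on the Lagrangian Grassmannian, but the paper avoids the issue entirely by applying Corollary \ref{fbr_bndl} to every end uniformly, which is the cleaner path.
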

\begin{proof} Let $L = \Phi(\tilde C)$ be an $n+1$-dimensional Lagrangian pair of pants and let $\theta: L \rightarrow S^1$ be the function defined above. We have to prove that $d \theta$ is an exact one form on $L$. First observe that 
\[ H_1(L, \Z) \cong \Z^{n+1}. \]
Let $\tilde E_J$ be an edge of $\tilde C$ and let $k \notin J$. In \S \ref{projections} we have proved that there is a neighborhood $\tilde{\mathcal W}_{J,k}$ of $\tilde E_J$ such that the map $\h_{J,k}: \inter \tilde{\mathcal W}_{J,k} \rightarrow \inter \Gamma_J$ is a fibre bundle with fibre $\tilde E_J \cong S^1$. It is easy to see that a basis for $H_1(L, \Z)$ is given by the choice of a fibre of the map $\h_{J,0}$ as $J$ ranges among the sets with $|J|=n$ and $0 \notin J$. On the other hand, Corollary \ref{fbr_bndl} tells us that $\Phi( \inter \tilde{\mathcal W}_{J,0})$ is the graph of an exact one form defined over $\tilde E_J \times \inter \Gamma_J = \inter \hat \Gamma_J$. Since $\hat \Gamma_J$ is special Lagrangian, this implies that $\Phi( \inter \tilde{\mathcal W}_{J,0})$ must have vanishing Maslov class. In particular $d \theta$ is exact on $\Phi( \inter \tilde{\mathcal W}_{J,0})$. By the above description of a basis of $H_1(L, \Z)$, $d \theta$ is exact on $L$. 
\end{proof}

\section{Lifting tropical curves in $\R^2$} \label{lifting_trop}
In this section we restrict to the case $n=1$ and, as an application of Lagrangian pairs of pants, we prove the following theorem

\begin{thm} \label{trop_to_lag}
Let $\dim M_{\R} =2$. Given a smooth tropical curve $\Xi$ in $M_{\R}$ constructed from a unimodal, regular subdivision $(P, \nu)$ of a convex integral lattice polytope $P \subset N_{\R}$, there exists a one parameter family $\mathcal L_t$ of smooth Lagrangian submanifolds of $T^*T = M_{\R} \times N_{\R}/N$ such that 
\begin{itemize}
\item[a)] for all $t$, $\mathcal L_t$ is homeomorphic to the PL lift $\hat \Xi$ of $\Xi$;
\item[b)] as $t \rightarrow 0$, $\mathcal L_t$ converges to $\hat \Xi$ in the Hausdorff topology. 
\end{itemize}
\end{thm}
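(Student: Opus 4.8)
The plan is to assemble $\mathcal L_t$ from rescaled Lagrangian pairs of pants, one placed over each vertex of $\Xi$, and to glue them to one another along the bounded edges by interpolating the Legendre-transformed functions furnished by Corollary \ref{fbr_bndl}. First I would set up the local models. Since $n=1$ and the subdivision $(P,\nu)$ is unimodal, every vertex $\check e$ of $\Xi$ is dual to a unimodular triangle $e\in(P,\nu)$, whose dual fan is $\mathrm{GL}(2,\Z)$-equivalent to the fan of $\PP^2$; hence near $\check e$ the curve $\Xi$ is the image of the standard tropical line $\Gamma$ under an affine automorphism $\rho^*_{x_0}$ as in Definition \ref{automorph}. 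Applying the corresponding affine symplectomorphism $\hat\rho_{x_0}$ to a rescaled pair of pants $\Phi_\lambda(\tilde C)$ (Definition \ref{LpPants}) produces a smooth Lagrangian $L^{(v)}_\lambda=\hat\rho_{x_0}\bigl(\Phi_\lambda(\tilde C)\bigr)$ whose projection to $M_{\R}$ is the small set $x_0+(A^t)^{-1}(\lambda\mathcal H)$, a neighbourhood of $\check e$, and whose three legs run out along the three edges of $\Xi$ emanating from $\check e$. For $\lambda$ small the amoebas of distinct vertices are disjoint, and the legs overlap only in pairs, one pair along each bounded edge.

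Next comes the gluing. Fix a bounded edge $\check e$ of $\Xi$ joining vertices $v_1,v_2$ and dual to an edge of $(P,\nu)$; in each local model $\check e$ corresponds to an end $\hat\Gamma_J$ with $|J|=1$, the two ends extending in opposite directions. By the rescaled analogue of Corollary \ref{fbr_bndl}, over the neighbourhood $\inter\tilde{\mathcal W}_{J,0}$ each leg $\Phi_\lambda(\inter\tilde{\mathcal W}_{J,0})$ is the graph of $dG_i$, where $G_i$ is a Legendre transform of $\lambda F$ defined on $\tilde E_J\times\inter\Gamma_J$, and $\h_{J,0}$ is a trivial $S^1$-bundle over $\inter\Gamma_J$ with fibre $\tilde E_J\cong S^1$. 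Using the identification of Lemma \ref{cotangent}, both legs are graphs of exact one-forms over the same cylinder $\tilde E_J\times\inter\Gamma_J$. I would choose a cutoff $\chi$ depending only on the coordinate along $\Gamma_J$, equal to $1$ near $v_1$ and $0$ near $v_2$, and set $G=\chi G_1+(1-\chi)G_2$. The graph of $dG$ is the graph of an exact one-form, hence automatically Lagrangian, it agrees with $L^{(v_1)}_\lambda$ near $v_1$ and with $L^{(v_2)}_\lambda$ near $v_2$, and it replaces the two legs over the middle of $\check e$. Unbounded rays of $\Xi$ require no gluing: there the single leg is left as the graph of $dG_J$, which already matches the PL cylinder asymptotically by Lemma \ref{hnear_faces}.

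The main obstacle, and the point I would treat most carefully, is showing that the interpolated graph is an embedding whose fibres over $\inter\Gamma_J$ remain circles, so that the glued piece is diffeomorphic to the cylinder $\hat e\cong\check e\times E_J$. Fibrewise non-degeneracy and global injectivity would follow exactly as in \eqref{injectiv_ineq}, provided $G_1$ and $G_2$ are $C^1$-close on the overlap. This is where the scaling is used: the edge $\check e$ has fixed positive length while each amoeba has diameter $O(\lambda)$, so for $\lambda$ small the middle of $\check e$ is a long region on which both local models are $C^1$-close to the standard PL cylinder (by Corollary \ref{hausdorf_conv} together with the explicit asymptotics \eqref{g_special_case}, \eqref{inqDelta1} and Lemma \ref{hnear_faces}). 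Hence the fibrewise Hessian of $-G$ stays positive definite and the interpolation is an embedding onto a cylinder whose $\h_{J,0}$-fibres are circles.

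Assembling the unchanged amoeba caps near each vertex with the interpolated cylinders over the edges then yields a smooth Lagrangian $\mathcal L_t$, and the homeomorphism with $\hat\Xi$ is obtained by combining the local identifications of Proposition \ref{pairpants_PLft} (each cap is a piece of a pair of pants, homeomorphic to the PL-lift over a single tropical line) with the product structure of the glued cylinders, matched along $\check e$ in the same combinatorial pattern by which $\hat\Xi=\bigcup_e\hat e$ is built. Finally I would identify the family parameter $t$ with the scaling $\lambda$: statement (b) then follows because each rescaled pair of pants converges to its PL-lift by Corollary \ref{hausdorf_conv}, while the interpolation perturbations are supported precisely where both pieces are already $C^0$-close to the PL cylinder, so they vanish in the limit and $\mathcal L_t\to\hat\Xi$ in the Hausdorff topology.
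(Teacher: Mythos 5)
Your proposal is correct in outline and uses the same ingredients as the paper's proof --- rescaled pairs of pants placed at the vertices (Lemma \ref{resc_small}), the Legendre transforms of Corollary \ref{fbr_bndl} along the edges, and a cutoff in the edge coordinate --- but the gluing mechanism is genuinely different, and the comparison is instructive. The paper never interpolates between the two vertex models directly: for each vertex $\check e$ and adjacent edge $\check f$ it multiplies the single potential by a cutoff, $G'_{e,f}=\eta\, G_{e,f}$, thereby flattening that one leg onto the exact PL cylinder, and then joins the two flattened legs by the honest flat piece $\rho_f\times C_f$. This buys three things: each interpolation involves only one local model, so the two legs never need to be graphs over a common base; embeddedness of each piece is automatic, since it is a graph of a one-form; and disjointness of the pieces reduces to the elementary projection estimate $\h_f([r''_{e,f},r_{e,f}]\times C_f)\subseteq\mathcal H_{r''_{e,f}}-\mathcal H_{r_{e,f}}$, which holds at \emph{every} scale because $0\le\eta\le1$, with no quantitative closeness estimate anywhere. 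Your convex combination $G=\chi G_1+(1-\chi)G_2$ also works, but two points in your write-up deserve adjustment. First, your ``main obstacle'' is vacuous: the graph of $dG$ over the cylinder is automatically an embedded Lagrangian diffeomorphic to the cylinder, so the fibrewise Hessian and injectivity argument modeled on \eqref{injectiv_ineq} is not needed at all. Second, what the smallness of $\lambda$ must really control is the extra term $(G_1-G_2)\,d\chi$ in $dG$, which governs how far the interpolated piece strays from the PL cylinder (this is what disjointness of the pieces and Hausdorff convergence actually require); for this you need the potentials themselves, not just their differentials, to be uniformly close on the overlap, whereas $C^1$-closeness of the legs to the PL cylinder as submanifolds only pins down the $dG_i$ and leaves the additive constants free. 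The constants do match here, because both Legendre transforms \eqref{legdre_transf} decay to zero along the leg (for $n=1$ one has $y_1\sim C x_1^{-2}$, so $x_1y_1\to 0$ and $F\to 0$), but this is precisely the estimate that your citation of Corollary \ref{hausdorf_conv} and Lemma \ref{hnear_faces} glosses over --- and exactly the estimate the paper's flattening trick renders unnecessary.
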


Before giving the proof, we use the results of \S \ref{projections} to describe the behavior of the embedding $\Phi: \tilde C \rightarrow T^*T$ in a neighborhood of an edge of $\tilde C$.

\subsection{Projection to the legs} \label{proj_legs}  Let $C \subset T$ be a two dimensional Lagrangian coamoeba and let $\Gamma$ be its dual tropical line. Recall that the edges of $C$ are labeled by $E_j$, such that $p_j$ is the unique vertex of $C$ not in $E_j$, i.e. $J= \{j \}$. Similarly, the leg of $\Gamma$ generated by $u_j$ is labeled by $\Gamma_j$. 

For every $k \neq j$, we have the projections $\y_{j,k}$ and $\x_{j,k}$ as in Example \ref{proj_st1}, the associated maps $\h_{j,k}$ and $\gb_{j,k}$ and the neighborhoods $\tilde{\mathcal W}_{j,k}$ of $\tilde E_j$. 

Since $\Gamma_j$ has a unique primitive integral generator $u_j$, it can be canonically identified with $\R_{\geq 0}$ by identifying points of $\Gamma_j$ with their coordinate with respect to $u_j$. Given a point $r_j \in \inter \Gamma_j$ let
\begin{equation} \label{qsets}
 Q_{r_j} = \left \{ x \in \Gamma_j \, | \, x \geq r_j \right \}.
\end{equation}
For every leg $\Gamma_j$, let us fix once an for all a $k_j \neq j$ and denote 
\[ \y_j = \y_{j,k_j}, \quad \x_j = \x_{j, k_j}, \quad \h_j = \h_{j, k_j}, \quad \gb_j = \gb_{j, k_j}. \]
Let
\begin{equation}  \label{endsofH}
     \mathcal H_{r_j} = \mathcal H \cap \x_j^{-1}(Q_{r_j}). 
\end{equation}
Obviously we have that $\h^{-1}(\mathcal H_{r_j}) \subseteq \tilde{\mathcal W}_{j,k_j}$, see Figure \ref{amoeba_level_sets} depicting these sets. In particular Corollary \ref{fbr_bndl} implies
\begin{cor} \label{fbr_bnd_leg}
The map 
\[ \gb_j: \h^{-1}(\mathcal H_{r_j}) \rightarrow Q_{r_j} \times \tilde E_j \]
is a diffeomorphism and $\Phi (\h^{-1}(\mathcal H_{r_j}))$ is the graph of the differential of a function defined on $Q_{r_j} \times \tilde E_j$.
\end{cor}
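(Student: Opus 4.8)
The plan is to obtain this statement as a direct restriction of Corollary \ref{fbr_bndl}, specialized to $n=1$, $J=\{j\}$ and the chosen index $k_j \neq j$. The only real content is a bookkeeping identification of the domain; once that is in place the diffeomorphism and the graph assertion are immediate.

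First I would unwind the definitions to rewrite the domain. By definition $\mathcal H_{r_j} = \mathcal H \cap \x_j^{-1}(Q_{r_j})$ and $\h_j = \x_j \circ \h$, while Proposition \ref{imh} gives $\h(\tilde C) = \mathcal H$, so $\h^{-1}(\mathcal H) = \tilde C$. Hence
\[ \h^{-1}(\mathcal H_{r_j}) = \tilde C \cap (\x_j \circ \h)^{-1}(Q_{r_j}) = \h_j^{-1}(Q_{r_j}). \]
Since $r_j \in \inter \Gamma_j$, the set $Q_{r_j}$ is a closed sub-ray contained in the relative interior $\inter \Gamma_j$; and $\h_j$ sends the boundary of $\tilde{\mathcal W}_{j,k_j}$ to the boundary $\{x=0\}$ of $\Gamma_j$. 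Therefore $\h_j^{-1}(Q_{r_j}) \subseteq \inter \tilde{\mathcal W}_{j,k_j}$, which is exactly the neighborhood to which Corollary \ref{fbr_bndl} applies. This refines the observation $\h^{-1}(\mathcal H_{r_j}) \subseteq \tilde{\mathcal W}_{j,k_j}$ to a statement about the interior, which is the point that needs a little care.

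Next I would invoke Corollary \ref{fbr_bndl}: for $J=\{j\}$ and $k=k_j$ the map $\gb_j = (\y_j, \h_j)$ is a diffeomorphism of $\inter \tilde{\mathcal W}_{j,k_j}$ onto $\tilde E_j \times \inter \Gamma_j$, realizing $\h_j$ as a trivial $\tilde E_j$-bundle over $\inter \Gamma_j$. The preimage under $\gb_j$ of the sub-product $\tilde E_j \times Q_{r_j}$ is precisely $\h_j^{-1}(Q_{r_j}) = \h^{-1}(\mathcal H_{r_j})$, so restricting the diffeomorphism to this preimage yields a diffeomorphism onto $\tilde E_j \times Q_{r_j}$, equivalently onto $Q_{r_j} \times \tilde E_j$ after reordering the two factors. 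Finally, Corollary \ref{fbr_bndl} already exhibits $\Phi(\inter \tilde{\mathcal W}_{j,k_j})$ as the graph over $\tilde E_j \times \inter \Gamma_j$ of the differential of a Legendre transform $G$ of $F$; restricting $G$ to the closed subset $\tilde E_j \times Q_{r_j}$ shows that $\Phi(\h^{-1}(\mathcal H_{r_j}))$ is the graph of $dG$ over $Q_{r_j} \times \tilde E_j$, as claimed.

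I do not expect a genuine obstacle: the argument is entirely a restriction of an already established diffeomorphism and bundle structure. The single place requiring attention is the set-theoretic identity $\h^{-1}(\mathcal H_{r_j}) = \h_j^{-1}(Q_{r_j})$ together with the verification that this set lies in the \emph{interior} $\inter \tilde{\mathcal W}_{j,k_j}$ (and not merely its closure), so that Corollary \ref{fbr_bndl}, which is phrased over interiors, applies verbatim.
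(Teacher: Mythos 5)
Your proposal is correct and takes essentially the same route as the paper: the paper's entire proof consists of the observation that $\h^{-1}(\mathcal H_{r_j}) \subseteq \tilde{\mathcal W}_{j,k_j}$ followed by an application of Corollary \ref{fbr_bndl}. Your extra bookkeeping --- the identity $\h^{-1}(\mathcal H_{r_j}) = \h_j^{-1}(Q_{r_j})$ via Proposition \ref{imh}, and the refinement that the preimage lies in $\inter \tilde{\mathcal W}_{j,k_j}$ (forced since $h_j \geq r_j >0$ there while $h_j$ vanishes on the boundary of $\tilde{\mathcal W}_{j,k_j}$) --- only makes explicit what the paper dismisses as obvious.
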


\begin{figure}[!ht] 
\begin{center}
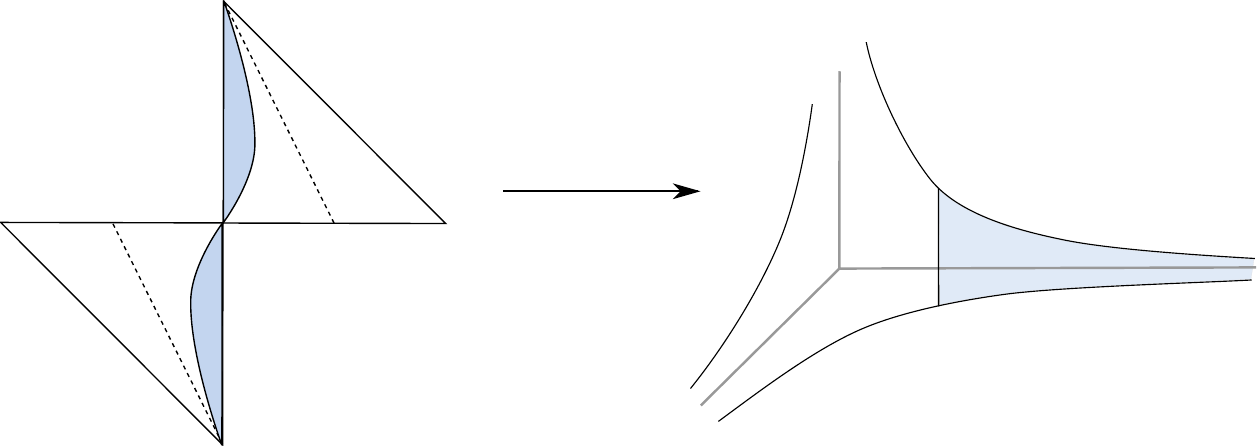
\caption{The shaded area in the coamoeba (left) represents the preimage of $\mathcal H_{r_1}$. The dashed lines delimit the neighborhood $\tilde{\mathcal W}_{1,0}$} \label{amoeba_level_sets}
\end{center}
\end{figure}

For every $j$, choose $r_j \in \Gamma_j$ such that 
\[ \mathcal H_{r_j} \subset \inter \mathcal V_j \]
where the neighborhood $\mathcal V_j$ of $\Gamma_j$ is defined in \eqref{h_nbhoods}.  In particular all the $\mathcal H_{r_j}$'s are pairwise disjoint. Define the set
\begin{equation} \label{trim1}
   \mathcal H^{[1]} = \mathcal H - \bigcup_{j} \mathcal H_{r_j}. 
\end{equation} 
Clearly $\mathcal H^{[1]}$ is homeomorphic to $\mathcal H$ and $\h^{-1}(\mathcal H^{[1]})$ is homeomorphic to a pair pants, in particular to $\tilde C$.

\subsection{Rescaling} \label{resc} It is convenient to consider also a rescaled Lagrangian pairs of pants $\Phi_{\lambda}(\tilde C)$, constructed replacing the function $F$ with the function $F_{\lambda} = \lambda F$. We continue to denote by $\mathcal H$ the image of $\h_{\lambda}$, by $\mathcal H_{r_j}$ the subsets \eqref{endsofH} and by $\mathcal H^{[1]}$ the subset \eqref{trim1}. We have the following
\begin{lem} \label{resc_small} Let $B \subset M_{\R}$ be a neighborhood of the origin. For every $j = 0, 1, 2$ choose points $r_j \in \inter \Gamma_j$ such that 
\[ \conv \{ r_0, r_1, r_2 \} \subset B. \]
Then there exists a rescaled Lagrangian pair of pants $\Phi_{\lambda}: \tilde C \rightarrow T^*T$ such that for all $j$
\[ \mathcal H_{r_j} \subset \inter \mathcal V_j \] 
and 
\[ \mathcal H^{[1]} \subset B. \]
\end{lem}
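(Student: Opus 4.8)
The plan is to treat this as a statement about the planar geometry of the amoeba $\mathcal H$, since the Lagrangian pair of pants enters only through the fact that the image of $\h_\lambda = \lambda\h$ in $M_{\R}$ is $\mathcal H = \lambda\mathcal A$, where $\mathcal A$ denotes the fixed (unrescaled) amoeba of Proposition \ref{imh}. Two observations drive everything. First, both the tropical line $\Gamma$ and each neighborhood $\mathcal V_j$, being intersections of the linear half-spaces $D_{jk}$ whose boundaries pass through the origin, are invariant under the scalings $x \mapsto \lambda x$. Second, $\mathcal A$ lies in a bounded-width neighborhood of $\Gamma$: on $\mathcal A_0 = \{x_1,x_2 \geq 0,\ x_1x_2 \leq 1/4\}$ one has $\min(x_1,x_2)\leq 1/2$, so every point of $\mathcal A_0$ is within distance $1/2$ of $\Gamma_1 \cup \Gamma_2$, and applying the symmetries $R^*_k$ (Lemma \ref{tnbhd}) yields a constant $C_0$ with $\mathcal A \subseteq N_{C_0}(\Gamma)$, the $C_0$-neighborhood of $\Gamma$.

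First I would establish $\mathcal H_{r_j} \subset \inter\mathcal V_j$. Using the scale-invariance of $\mathcal V_j$ and the linearity of $\x_j$,
\[ \mathcal H_{r_j} = \lambda\mathcal A \cap \x_j^{-1}(Q_{r_j}) = \lambda\bigl(\mathcal A \cap \x_j^{-1}(Q_{r_j/\lambda})\bigr), \]
so it suffices to show $\mathcal A \cap \x_j^{-1}(Q_s) \subset \inter\mathcal V_j$ for $s$ large: then $\lambda\cdot(\,\cdot\,)\subset\inter\mathcal V_j$ by the cone property, and $s = r_j/\lambda \to \infty$ as $\lambda\to0$. The inclusion for large $s$ is precisely the statement that the end of the amoeba along the $j$-th leg hugs $\Gamma_j$: on $\mathcal S_0$ one has $x_2 = 1/(4x_1)\to 0$, so the far end of $\mathcal A$ lies in the thin wedge $\inter\mathcal V_j$ around $\Gamma_j$. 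This is the content already used in \S\ref{proj_legs} to choose the $r_j$, transported to the rescaled picture by scaling.

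Next, for $\mathcal H^{[1]}\subset B$, note that by definition $\mathcal H^{[1]} = \mathcal H \cap \bigcap_j\{\x_j < r_j\}$, a subset of the fixed bounded triangle $\Delta_r = \bigcap_j\{\x_j < r_j\}$. From $\mathcal A\subseteq N_{C_0}(\Gamma)$ and $\lambda\Gamma=\Gamma$ I get $\mathcal H = \lambda\mathcal A \subseteq N_{\lambda C_0}(\Gamma)$, equivalently the Hausdorff convergence $\lambda\mathcal A\to\Gamma$ of Corollary \ref{hausdorf_conv}. Intersecting with $\Delta_r$ and using that each $\x_j$ is Lipschitz, every point of $\mathcal H^{[1]}$ lies within $\lambda C_0$ of a point of $\Gamma\cap\Delta_{r+O(\lambda)}$, hence within $O(\lambda)$ of the compact tripod $K = \Gamma\cap\overline{\Delta_r} = \bigcup_j [0,r_j]$. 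Since $u_0+u_1+u_2=0$, the origin lies in $\inter\conv\{r_0,r_1,r_2\}$, so each segment $[0,r_j]$, and therefore $K$, is contained in $\conv\{r_0,r_1,r_2\}\subset B$. As $B$ is open and $K$ compact, some neighborhood $N_\epsilon(K)$ still lies in $B$, and $\mathcal H^{[1]}\subset N_\epsilon(K)\subset B$ once $\lambda$ is small enough that the $O(\lambda)$-error is below $\epsilon$.

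Finally I would choose $\lambda$ small enough to meet simultaneously the threshold $r_j/\lambda \geq s_0$ of the first step and the error bound of the second. I expect the inclusion $\mathcal H^{[1]}\subset B$ to be the delicate point: the a priori bound $\mathcal H^{[1]}\subset\Delta_r$ is not enough, since the triangle $\Delta_r$ may protrude from $B$, and one genuinely needs the uniform-width estimate for $\mathcal A$ (equivalently $\lambda\mathcal A\to\Gamma$) to squeeze the body into $B$. The only mildly fiddly computation is the boundary bookkeeping matching $\Gamma\cap N_{\lambda C_0}(\Delta_r)$ to the tripod $K$; everything else reduces to the scale-invariance of $\Gamma$ and the cones $\mathcal V_j$.
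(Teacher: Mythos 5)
Your argument is correct, and it supplies exactly what the paper omits: the paper's entire proof of Lemma \ref{resc_small} is the sentence ``This easily follows from the definition and properties of $\Phi_{\lambda}$ and $\mathcal H$'', and the properties being waved at are precisely the ones you isolate --- $\Gamma$ and the cones $\mathcal V_j$ are invariant under $x \mapsto \lambda x$ because they are cut out by half-planes through the origin, the rescaled image is $\lambda \mathcal A$, the identity $\lambda\mathcal A \cap \x_j^{-1}(Q_{r_j}) = \lambda(\mathcal A \cap \x_j^{-1}(Q_{r_j/\lambda}))$ reduces the first inclusion to the large-$s$ statement already invoked in \S\ref{proj_legs}, and the uniform width bound $\mathcal A \subseteq N_{C_0}(\Gamma)$ (which is the quantitative content behind Corollary \ref{hausdorf_conv}) squeezes $\mathcal H^{[1]}$ into an $O(\lambda)$-neighborhood of the tripod $K = \bigcup_j [0,r_j]$, which lies in $\conv\{r_0,r_1,r_2\} \subset B$ because $u_0+u_1+u_2=0$ forces $0 \in \inter \conv\{r_0,r_1,r_2\}$.

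Two remarks, neither fatal. First, your assertion that $\Delta_r = \bigcap_j \{\x_j < r_j\}$ is a \emph{bounded} triangle is unjustified and in general false: each $\x_j = \x_{j,k_j}$ projects along the direction of the third leg, and for the choice $k_1=k_2=0$, $k_0=1$ the region is $\{-r_0 < x_1 < r_1,\ x_2 < r_2\}$, which is unbounded; boundedness holds only for the ``cyclic'' choices of the $k_j$'s. Fortunately your argument never uses boundedness of $\Delta_r$, only compactness of $K = \Gamma \cap \overline{\Delta_r}$, and the identification of $K$ with the tripod is valid for \emph{every} choice: on the leg $\Gamma_i$ the constraint coming from $\x_i$ reads $t \leq r_i$, while for $j \neq i$ the point $tu_i$ has $\x_j$-coordinate either $0$ (if $u_i$ spans the fibre of $\x_j$) or $-t \leq 0$ (using $u_{k_j} = -u_j - u_{m_j}$), so those constraints are automatic. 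You should simply delete the boundedness claim. Second, you use that $B$ is open, whereas the lemma says only ``neighborhood of the origin''; this is the right (indeed necessary) reading, since for a non-open $B$ --- say the closed triangle $\conv\{r_0,r_1,r_2\}$ together with a small ball at the origin --- the conclusion genuinely fails: $\mathcal H^{[1]}$ always contains points at positive transverse distance (of order $\lambda^2$) from the leg with $\x_j$-coordinate arbitrarily close to $r_j$, and these protrude from the triangle at its vertex $r_j$. Since the application in Step 1 of the proof of Theorem \ref{trop_to_lag} takes the $B_e$ open, this is a matter of flagging the hypothesis, not a gap.
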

This easily follows from the definition and properties of $\Phi_{\lambda}$ and $\mathcal H$. 

\subsection{Proof of Theorem \ref{trop_to_lag}} \label{proof_mainthm}
{\it Step 1: the local models.}
Let $\check e$ be a vertex of $\Xi$. Recall definition \eqref{star_neigh} of the star-neighborhood $\Xi_{\check e}$. Define the tangent tropical line $\Gamma_e \subseteq M_{\R}$ of $\Xi$ to be the cone of $\Xi_{\check e}$  with center $\check e$, i.e. 
 \begin{equation} \label{tangent_tropical_line}
           \Gamma_e = \{ \check e + t(v-\check e) \in M_{\R} \, | \, v \in \Xi_{\check e} \ \text{and} \ t \in \R_{\geq 0} \}.
 \end{equation}
Notice that $\check e$ is the vertex of $\Gamma_e$. This tropical line is dual to the coamoeba $C_e$ defined in \S \ref{LagPLift}. In particular there is a one to one correspondence between edges $C_f$ of $C_e$ and legs of $\Gamma_e$ which we denote by
\[ \check f \mapsto \Gamma_{e,f}.\]
Also denote by $V_f \subset M_{\R}$ the affine line containing $\Gamma_{e,f}$.

There are natural coordinates $x=(x_1, x_2)$ adapted to a tangent tropical line $\Gamma_e$ given by choosing $\check e$ as the origin and a basis $\{ u_1, u_{2} \}$ of $M$ such that each $u_j$ is the primitive integral generator of a one dimensional cone of $\Gamma_e$. With respect to these coordinates $\Gamma_e$ is identified with the standard tropical line $\Gamma$. Dually, let $\{u^*_1, u^*_{2} \}$ be a basis of $N_{\R}$ satisfying \eqref{dual_basis}. Then this basis and the choice of a vertex of $C_e$ as the origin of $T$ defines coordinates $y=(y_1, y_{2})$, with respect to which $C_e$ is identified with the standard Lagrangian coamoeba $C$. It is clear that such a choice of coordinates is unique up to a transformation in the group $G^*$ and in its dual $G$. In particular we can view the function $F$ of \eqref{Fglob} as being defined on $\tilde C_e$ and thus we have a Lagrangian pair of pants
\begin{equation} \label{local_vert}
   \begin{split} 
     \Phi_e: \check e \times \tilde C_e & \rightarrow M_{\R} \times N_{\R} / N  \\
                                           y             & \mapsto (\check e + (dF)_y, y).
   \end{split}    
\end{equation}
Denote by 
\[ \h_e:  \check e \times \tilde C_e \rightarrow M_{\R} \]
the composition of $\Phi_e$ with the projection onto $M_{\R}$ and by $\mathcal H_{e}$ the image of $\h_e$. In the local coordinates $(x,y)$, $\Phi_e$, $\h_e$ and $\mathcal H_e$ coincide with $\Phi$, $\h$ and $\mathcal H$ respectively. 

For every two dimensional $e \in (P, \nu)$ and every edge $f \preceq e$, let $p_{e,f} \in C_e$ be the unique vertex of $C_e$ which is not in $C_f$ and let $U_{e,f} = C_{e} - p_{e,f}$. Then choose and fix projections $\x_{e,f}: M_{\R} \rightarrow V_f$ and $\y_{e,f}: \tilde U_{e,f} \rightarrow C_{f}$ as in Example \ref{proj_st1} (see also \S \ref{proj_legs}). Thus we have the map $\gb_{e,f}: \tilde U_{e,f} \rightarrow V_f \times C_f$. 

For any choice of points $r_{e,f} \in \inter \Gamma_{e,f}$, we define $Q_{r_{e,f}} \subset \Gamma_{e,f}$ and $\mathcal H_{r_{e,f}} \subset \mathcal H_e$ as in \eqref{qsets} and \eqref{endsofH} respectively. 
We then have that  
\[ \gb_{e,f}: \h_e^{-1}(\mathcal H_{r_{e,f}}) \rightarrow Q_{r_{e,f}} \times C_f \]
is a diffeomorphism by Corollary \ref{fbr_bnd_leg}. 

As such, the image of $\Phi_e$ may be too big for our purposes. So we need to rescale it. For every vertex $\check e$ of $\Xi$ fix an open convex neighborhood $B_e$ of $\check e$ such that all the $B_e$'s are pairwise disjoint. Moreover, on every leg $\Gamma_{e,f}$ of $\Gamma_e$, choose a pair of points  $r'_{e,f}<r_{e,f}$ such that $r'_{e,f}, r_{e,f} \in B_e$.  By applying Lemma \ref{resc_small}, we can suitably rescale the Lagrangian pair of pants \eqref{local_vert} so that 
\[ \mathcal H_{r_{e,f}} \subset \mathcal H_{r'_{e,f}} \subset \inter \mathcal  V_{e,f}. \]
In particular we can define the subset 
\[ \mathcal H^{[1]}_e = \mathcal H_e - \bigcup_{f \preceq e} \mathcal H_{r_{e,f}} \] 
and assume that 
\begin{equation} \label{trim_in_B}
  \mathcal H^{[1]}_e  \subset B_e.
\end{equation}

\medskip

{\it Step 2: the gluing.} We now glue together the local models. Denote by $G_{e,f}: Q_{r'_{e,f}} \times \tilde C_f \rightarrow \R$ the Legendre transform of $F$ constructed in Proposition \ref{faceproj}, so that the graph of $dG_{e,f}$ coincides with $\Phi_e(\h_e^{-1}(\mathcal H_{r'_{e,f}}))$.
Let  $[r'_{e,f}, r_{e,f}] \subset \Gamma_{e,f}$ denote the segment joining $r'_{e,f}$ and $r_{e,f}$. Given two points $r''_{e,f}, \bar r_{e,f} \in [r'_{e,f}, r_{e,f}]$ such that $r'_{e,f} < r''_{e,f} < \bar r_{e,f} < r_{e,f}$, let $\eta: [r'_{e,f}, r_{e,f}] \rightarrow \R$ be some smooth, non-increasing function such that
\[ \eta(x) = \begin{cases}
                         1 \quad x \in [r'_{e,f}, r''_{e,f}],          \\
                         0 \quad x \in [\bar r_{e,f} , r_{e,f}]. 
                 \end{cases}
 \]
Define the following smooth function $G'_{e,f}: [r'_{e,f}, r_{e,f}] \times C_f \rightarrow \R$
 \[ G'_{e,f}(x, y)= \eta(x) G_{e,f}(x,y)  \]
We use it to define a Lagrangian embedding as the graph of $dG'_{e,f}$
\begin{equation}
 \begin{split}
         \Phi_{e,f}: [r'_{e,f}, r_{e,f}] \times C_f & \rightarrow  M_{\R} \times N_{\R}/N \\
                                 (x,y) & \mapsto (x,y, (dG'_{e,f})_{(x,y)}) 
 \end{split}
\end{equation}
where here we use the immersion of the cotangent bundle of  $[r'_{e,f}, r_{e,f}] \times C_f$ in $M_{\R} \times N_{\R}/N$ given by Lemma \ref{cotangent}. By construction we have that 
\begin{equation} \label{overlap}
 \Phi_{e,f}( [r'_{e,f}, r''_{e,f}) \times C_f) = \Phi_e( \h_e^{-1}(\mathcal H_{r'_{e,f}} - \mathcal H_{r''_{e,f}}))
\end{equation}
and 
\begin{equation} \label{flattens}
   \Phi_{e,f}( [\bar r_{e,f}, r_{e,f}] \times C_f) =  [\bar r_{e,f}, r_{e,f}] \times C_f.
\end{equation}

\medskip

If $\check f$ has two vertices $\check e_1$ and $\check e_2$, let $\rho'_f \subset \check f$ (respectively $\rho_f$) be the segment joining the points $r'_{e_1,f}$ and $r'_{e_2,f}$ (resp. $r_{e_1,f}$ and $r_{e_2,f}$) . Otherwise if $\check f$ is an unbounded edge with only one vertex $\check e$, let $\rho'_{f}$  (resp. $\rho_f$) be the ray inside $\check f$ which starts at $r'_{e,f}$ (resp. at $r_{e,f}$). We have $\rho_f \subset \rho'_f$.  Now extend the Lagrangian embedding $\Phi_{e,f}$ to a Lagrangian embedding 
\[ \Phi_f: \rho'_f \times C_f \rightarrow M_{\R} \times N_{\R}/N \]
by defining $\Phi_f$ to be equal to $\Phi_{e,f}$ on $[r'_{e,f}, r_{e,f}] \times C_f$ for every vertex $\check e$ of $\check f$ and to be the inclusion on $\rho_f \times C_f$. Clearly $\Phi_f$ is a smooth extension by \eqref{flattens}. Also define 
\[ \h_f:  \rho'_f \times C_f \rightarrow M_{\R}  \]
to be equal to $\Phi_f$ followed by projection onto $M_{\R}$. 

\medskip 

{\it Step 3: the construction of $\mathcal L_t$.} We now put all the pieces together. For every vertex $\check e$ of $\Xi$, define the following open subset of $\tilde C_e$
\[ \mathcal Z_e = \h_e^{-1} \left( \mathcal H_e - \bigcup_{f \preceq e} \mathcal H_{r''_{e,f}} \right) \]
and the Lagrangian manifold  
\[ \mathcal L_e = \Phi_e(\mathcal Z_e). \]
For every edge $\check f$ of $\Xi$ define the Lagrangian submanifold
\[ \mathcal L_f = \Phi_f(\rho'_f \times C_f) \]
and then take the union 
\[ \mathcal L = \left( \bigcup_{\check e} \mathcal L_e \right) \cup  \left( \bigcup_{\check f} \mathcal L_f \right) \]
We need to show that $\mathcal L$ is a smooth submanifold. This is done by showing that the only intersections between the various pieces is given by \eqref{overlap} when $\check e$ is a vertex of $\check f$. Given two vertices $\check e_1$ and $\check e_2$, clearly 
\[ \mathcal L_{e_1} \cap \mathcal L_{e_2} = \emptyset \]
since the projection of $\mathcal L_{e_j}$ to $M_{\R}$ is contained in $B_{e_j}$ by \eqref{trim_in_B} and $B_{e_1} \cap B_{e_2} = \emptyset$.

\begin{lem}
Given two edges $\check f_1$ and $\check f_2$, we have
\[  \mathcal L_{f_1} \cap \mathcal L_{f_2} = \emptyset.\]
Given a vertex $\check e$ and an edge $\check f$, then $\mathcal L_{e} \cap \mathcal L_{f}$ is non empty if and only if $\check e$ is a vertex of $\check f$. In which case 
\[ 
  \mathcal L_{e} \cap \mathcal L_{f} = \Phi_{f}( [r'_{e,f}, r''_{e,f}) \times C_f) = \Phi_e( \h_e^{-1}(\mathcal H_{r'_{e,f}} - \mathcal H_{r''_{e,f}})).
 \]
\end{lem}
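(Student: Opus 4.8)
The plan is to carry out everything at the level of the projections to $M_\R$: disjoint projections force disjoint subsets, and the one genuine overlap is identified directly from \eqref{overlap}. First I would record the two shapes of projection that enter. By \eqref{trim_in_B} the projection $\h_e(\mathcal Z_e)=\mathcal H_e-\bigcup_{f\preceq e}\mathcal H_{r''_{e,f}}$ of $\mathcal L_e$ is contained in the convex neighbourhood $B_e$, and the $B_e$ are pairwise disjoint; I would also note that along the leg $\Gamma_{e,f}$ this projection avoids $\x_{e,f}^{-1}(Q_{r''_{e,f}})$, i.e.\ it satisfies $\x_{e,f}<r''_{e,f}$ there, precisely because $\mathcal H_{r''_{e,f}}$ has been deleted. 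For $\mathcal L_f$, using $\Phi_f=\Phi_{e,f}$ on $[r'_{e,f},r_{e,f}]\times C_f$ and the inclusion on $\rho_f\times C_f$, the projection splits into an end piece near each vertex $\check e$ of $\check f$, lying in the tube $\mathcal H_{r'_{e,f}}\subset\inter\mathcal V_{e,f}$ and satisfying $\x_{e,f}\ge r'_{e,f}$, together with the flat middle piece whose image is $\rho_f\subset\check f$. The nesting $\mathcal H_{r_{e,f}}\subset\mathcal H_{\bar r_{e,f}}\subset\mathcal H_{r''_{e,f}}\subset\mathcal H_{r'_{e,f}}$, which follows from $Q_{r}\subset Q_{r'}$ whenever $r'\le r$, will be used repeatedly.

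For two edges $\check f_1\neq\check f_2$: if they share no vertex, their projections lie in disjoint neighbourhoods of $\check f_1$ and $\check f_2$ (the relevant $B_e$ are distinct, and after shrinking the $B_e$ so that each meets only the edges incident to its vertex, the middle segments $\rho_{f_i}\subset\check f_i$ stay away from them); hence $\mathcal L_{f_1}\cap\mathcal L_{f_2}=\emptyset$. If they share a vertex $\check e$, the end pieces at $\check e$ project into $\mathcal H_{r'_{e,f_1}}$ and $\mathcal H_{r'_{e,f_2}}$, the ends attached to distinct legs at $\check e$, which are pairwise disjoint as in \S\ref{proj_legs}, while the middle pieces lie in distinct edges; so again the projections are disjoint.

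For a vertex $\check e$ and an edge $\check f$: if $\check e$ is not a vertex of $\check f$ then $\check e\notin\check f$, so $B_e\supseteq\h_e(\mathcal Z_e)$ is disjoint from the projection of $\mathcal L_f$ (which sits in the $B_{e'}$ of the vertices $\check e'$ of $\check f$ together with $\rho_f\subset\check f$), giving $\mathcal L_e\cap\mathcal L_f=\emptyset$. The essential case is when $\check e$ is a vertex of $\check f$, where I would dissect $\mathcal L_f$ along the leg. Over $[r'_{e,f},r''_{e,f})$ one has $\eta\equiv1$, so by \eqref{overlap}
\[
\Phi_f\big([r'_{e,f},r''_{e,f})\times C_f\big)=\Phi_e\big(\h_e^{-1}(\mathcal H_{r'_{e,f}}-\mathcal H_{r''_{e,f}})\big),
\]
which lies in both $\mathcal L_e$ and $\mathcal L_f$ and is exactly the asserted intersection; one only checks that $\mathcal H_{r'_{e,f}}-\mathcal H_{r''_{e,f}}\subset\inter\mathcal V_{e,f}$ avoids the other deleted ends $\mathcal H_{r''_{e,f'}}$. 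Over $[r''_{e,f},r_{e,f}]$ and over $\rho_f$, every image point $q$ has $\x_{e,f}(q)\ge r''_{e,f}$ on the positive leg direction, so $q\notin\h_e(\mathcal Z_e)$ — either $q\notin\mathcal H_e$, or $q\in\mathcal H_{r''_{e,f}}$, which was removed; thus these parts meet $\mathcal L_e$ nowhere. Finally the end of $\mathcal L_f$ at the other vertex $\check e_2$ of $\check f$ projects into $B_{e_2}$, disjoint from $B_e$. Assembling the three contributions yields the stated equality.

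The main obstacle is the $\subseteq$ inclusion in this last case: ruling out spurious intersections in the transition region $[r''_{e,f},\bar r_{e,f}]$, where $\Phi_f$ is the graph of $d(\eta\,G_{e,f})$ rather than of $dG_{e,f}$, so that \eqref{overlap} no longer applies and one would otherwise have to compare two distinct Lagrangian graphs over the same base region. The way to resolve this is exactly the $\x_{e,f}$--coordinate separation recorded in the first paragraph: trimming $\mathcal L_e$ by deleting $\mathcal H_{r''_{e,f}}$ forces its projection to satisfy $\x_{e,f}<r''_{e,f}$ along the leg, whereas the entire transition-and-flat portion of $\mathcal L_f$ satisfies $\x_{e,f}\ge r''_{e,f}$, so the two are automatically disjoint there with no analysis of the interpolating graph. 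I expect the remaining effort to be routine bookkeeping of the chain $r'_{e,f}<r''_{e,f}<\bar r_{e,f}<r_{e,f}$ and the corresponding nesting of the sets $\mathcal H_{(\cdot)}$ and $\mathcal V_{e,f}$.
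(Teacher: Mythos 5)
Your overall strategy---work entirely with projections to $M_{\R}$, use \eqref{overlap} to identify the overlap, and separate everything else---is the paper's strategy, and your treatment of the vertex--edge case is sound: since $\Phi_f$ is a graph over $[r'_{e,f},r_{e,f}]\times C_f$, the $\x_{e,f}$-coordinate of the projection of an image point is just the base coordinate, so the transition-and-flat portion of $\mathcal L_f$ projects into $\{\x_{e,f}\geq r''_{e,f}\}$, while the projection $\mathcal H_e-\bigcup_{f'}\mathcal H_{r''_{e,f'}}$ of $\mathcal L_e$ lies in $\{\x_{e,f}< r''_{e,f}\}$. This is in substance what the paper's argument gives for that case.

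The gap is the unproved assertion in your first paragraph, and it is exactly what your edge--edge case rests on. You claim the end piece of $\mathcal L_f$ at $\check e$ projects into the tube $\mathcal H_{r'_{e,f}}\subset\inter\mathcal V_{e,f}$, and you conclude $\mathcal L_{f_1}\cap\mathcal L_{f_2}=\emptyset$ for edges sharing the vertex $\check e$ because the two tubes are disjoint. But on the region where $0<\eta<1$, the set $\Phi_f(\cdot)$ is the graph of $d(\eta G_{e,f})$, not of $dG_{e,f}$, and nothing you have recorded shows its projection lies in $\mathcal H_e$ at all: the coordinate bookkeeping only confines it to the slab $\x_{e,f}^{-1}([r''_{e,f},r_{e,f}])$, and two such slabs attached to distinct legs at a common vertex always intersect, so disjointness of projections cannot be concluded from them. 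Hence your closing claim that the $\x_{e,f}$-separation makes ``any analysis of the interpolating graph'' unnecessary is true for the vertex--edge case but false for the edge--edge case; that analysis is the one computation the paper actually performs. In adapted coordinates one finds $\h_f(x_2,y_1)=\bigl(\eta(x_2)\,\partial G_{e,f}/\partial y_1,\,x_2\bigr)$, i.e.\ the projection of the interpolating graph is obtained from that of the true graph by scaling the transverse coordinate by $\eta(x_2)\in[0,1]$; since the fibre $\x_{e,f}^{-1}(x_2)\cap\mathcal H_e$ is an interval containing the point of the leg (equivalently, since $\inter\mathcal V_{e,f}$ is a convex cone containing the leg), this scaling stays inside, which yields the paper's key inclusion $\h_f([r''_{e,f},r_{e,f}]\times C_f)\subseteq\mathcal H_{r''_{e,f}}-\mathcal H_{r_{e,f}}$ and with it the tube containment you assumed. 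With that inclusion supplied, your proof goes through; without it, the edge--edge case (and also the containment of end pieces in $B_e$ that you invoke for non-adjacent edges) is unsupported.
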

\begin{proof}
It is clear that $\Phi_f(\rho_f \times C_f)$ is disjoint from any other piece, since its projection to $M_{\R}$ coincides with $\rho_f$ which does not intersect the projection of any other piece. The rest of the statement follows from the observation that 
\[ \h_f([r''_{e,f}, r_{e,f}] \times C_f) \subseteq \mathcal H_{r''_{e,f}} - \mathcal H_{r_{e,f}}. \]
 Let us prove this inclusion. 

By using coordinates $(x,y)$ on $M_{\R} \times N_{\R}/N$ adapted to $\Gamma_e$ as above, we can assume $\Gamma_{e,f} = \Gamma_2$, $\x_{e,f}=\x_{2,0}$ and $\y_{e,f}=\y_{2,0}$. Therefore $(x_2, y_1)$ are coordinates on $[r'_{e,f}, r_{e,f}] \times C_f$ and it can be easily seen that 
\[ \h_f(x_2,y_1) = \left( \frac{\partial G'_{e,f}}{\partial y_1}, x_2 \right) = \left( \eta(x_2) \frac{\partial G_{e,f}}{\partial y_1}, x_2 \right). \]
In particular, by definition of $G_{e,f}$ and since $0 \leq \eta(x_2) \leq 1$, this implies that for all $x_2 \in [r''_{e,f}, r_{e,f}]$ we have
\[ \h_f \left( \{ x_2 \} \times C_f \right) \subseteq \x_{e,f}^{-1}(x_2) \cap \mathcal H_e \subseteq \mathcal H_{r''_{e,f}} - \mathcal H_{r_{e,f}}. \]
This concludes the proof of the lemma.
\end{proof}

This Lemma implies then that $\mathcal L$ is a smooth manifold. It is obvious that $\mathcal L$ is also homeomorphic to $\hat \Xi$. To construct the family $\mathcal L_t$ it is enough to rescale all local models $\Phi_e$ and $\Phi_f$ uniformly by a factor of $t$, with $t \in [0,1]$. The fact that $\mathcal L_t$ converges in the Hausdorff topology to $\hat \Xi$ is clear from the construction (see also Corollary \ref{hausdorf_conv}). This concludes the proof of Theorem \ref{trop_to_lag}. 

\section{Some generalizations} \label{generalizations}
Here we explain three different generalizations of the above result. We do not yet have a complete understanding of these constructions, so we will mainly discuss some examples and pose some questions. 

\subsection{Different lifts of the same tropical curve?} \label{lifts_same_crv}
Here assume that $\dim M_{\R}=2$. If we compactify $X = M_{\R} \times T$ at infinity by adding a boundary $B \cong S^1 \times T$ (i.e. viewing $M_{\R}$ as a disk whose boundary is the first factor of $B$) then a Lagrangian lift $\mathcal L$ of $\Xi$ is a manifold with boundary on $B$. Suppose we have two lifts $\mathcal L_1$ and $\mathcal L_2$ of the same tropical curve $\Xi$ such that 
\[ \partial \mathcal L_1 = \partial \mathcal L_2. \]
Then $\mathcal L_2 - \mathcal L_1$ defines a homology class in $H_2(X, \Z) \cong H_2(T, \Z) = \Z$. It is not hard to show that one can construct pairs of Lagrangian lifts such that their difference represents any homology class. Let $\mathcal L_1$ be the lift constructed in previous sections, based on the $PL$ lift $\hat \Xi$. 
Let $\check f$ be an edge of $\Xi$, denote by $M^f_{\R}$ the tangent line of $\check f$ ($M^f$ is the lattice spanned by the integral tangent vectors) and by $(M^{f}_{\R})^{\perp} \subset N_{\R}$ its orthogonal space. Now consider the quotient 
\begin{equation}  \label{quotient_edge}
     \alpha: \check f \times T \rightarrow \check f \times \frac{T}{(M^f_{\R})^{\perp}}
\end{equation}
and a smooth section 
\begin{equation} \label{section_edge}
        \sigma_f : \check f \rightarrow \check f \times \frac{T}{(M^f_{\R})^{\perp}}.
\end{equation}
Define
\begin{equation} \label{general_lift_edge}
       \hat f_{\sigma_f} = \alpha^{-1}(\sigma_f(\check f)),
\end{equation}
then it is easy to see that this is still a Lagrangian submanifold. Notice that the canonical lift $\hat f$, defined in \S \ref{LagPLift}, corresponds to a constant section, indeed $\alpha(C_f)$ is a point in $T/(M^f_{\R})^{\perp}$.  We have that $T/(M^f_{\R})^{\perp} \cong S^1$. For every edge $\check f$, let $\sigma_f$ be a section such that its projection to $T/(M^f_{\R})^{\perp}$ defines a loop with base point $\alpha(C_f)$. Denote by $\sigma$ the collection $\{ \sigma_f\}_{\dim \check f = 1}$. Then define the twisted PL lift to be
\[ \hat \Xi_{\sigma} = \left( \bigcup_{\dim e=2} \check{e} \times C_e \right) \cup \left( \bigcup_{\dim f =1} \hat f_{\sigma_f} \right). \] 
Then $\hat \Xi_{\sigma}$ is still a topological submanifold, Lagrangian on its smooth part. If $\tau$ is a generator of $H_2(X, \Z)$ (represented by the torus $T$) then as a homology class
\[ \hat \Xi_{\sigma}- \hat \Xi = n_{\sigma} \tau \]
for some integer $n_\sigma$. In fact $n_{\sigma}$ can be written as
\[ n_{\sigma} = \sum_{f} n_{\sigma_f}, \]
where $n_{\sigma_f}$ is the class of the loop defined by $\sigma_f$ in $H_1(T/(M^f_{\R})^{\perp}, \Z) \cong \Z$. 

It is easy to see that in the same way we can also construct a smooth lift $\mathcal L_{\sigma}$. In fact recall that the lift $\mathcal L$, over the segment $\rho_f \subset \check f$ defined in \S \ref{proof_mainthm}, coincides with $\rho_f \times C_f$. Therefore it is enough to consider a section $\sigma_f$ which is constant and equal to the point $\alpha(C_f)$ outside $\rho_f$ and replace $\rho_f \times C_f$ with $\alpha^{-1}( \sigma_f(\rho_f))$. Denote the corresponding lift by $\mathcal L_{\sigma}$. 

\begin{rem} What is the relationship between $\mathcal L$ and $\mathcal L_{\sigma}$? Are they Hamiltonian equivalent? Of course, if $\mathcal L- \mathcal L_{\sigma}$ represents a non-trivial class, then there is no compactly supported Hamiltonian diffeomorphism between them, but there may be some non compactly supported ones, which also move the boundary and unwinds all the twists.
\end{rem}

\begin{rem} We expect that different Lagrangian lifts of the same tropical variety should have some relevance in the context of Homological Mirror Symmetry, as different lifts should be mirror to different coherent sheaves supported on the same complex subvariety corresponding to the tropical variety. This idea is investigated in Section 6.3 of \cite{diri_branes} when $\Xi$ is an affine subvariety of an affine manifold.  An explicit correspondence in the case of toric Calabi-Yau threefolds was proposed in \cite{onHmstoric}. 
\end{rem}

\subsection{Lifting non-smooth tropical curves} \label{non_smooth}
What happens if the tropical curve is not smooth? Can we still construct a Lagrangian lift? We believe it should be possible, although we do not have a complete theory yet. Here we only give some examples. Non smooth tropical curves involve the following generalizations: edges have a weight, i.e. a positive integer $w_f$ attached to each edge $\check f$; vertices may have valency higher than $3$; the primitive integral tangent vectors to the edges emanating from a vertex generate a sublattice of $M$. 

\begin{ex} \label{non_smooth_trivalent} Let $\Xi$ consist of just one $3$-valent vertex $\check e$, with tree edges $\check f_0, \check f_1, \check f_2$ emanating from it, with weights $w_0, w_1, w_2$ respectively. It can be obtained from a general lattice simplex $P$ in $N_{\R}$ (i.e. not necessarily elementary) via the procedure of \S \ref{trop_hyper} using a constant function $\nu$. The edges $\check f_j$ are then dual to the edges $f_j$ of $P$. The weight $w_j$ is the integral length of $f_j$. Of course we also have the balancing condition
\[ \sum w_j u_j= 0 \]
where the vectors $u_j$ are the primitive integral generators of the edges $\check f_j$ (pointing outward from the vertex). In this case it is easy to construct a Lagrangian lift of $\Xi$ as follows. Consider the integer
\[ m_e = | w_1 u_1 \wedge w_2 u_2 | \] 
Let $M' \subset M$ be the lattice 
\[ M'=  \spn_{\Z} \{ w_1u_1, w_2 u_2 \}.\]
It is a sublattice of $M$ of index $m_e$. The dual $N' \subset N_{\R}$ of $M'$ is a lattice which contains $N$ as a sublattice of index $m_e$. 
In particular, if we denote
\[ T' = \frac{N_{\R}}{N'} \]
we have that the action of $N'$ on $T$ defines a degree $m_e$ covering map
\[ \beta: T \rightarrow T'.\]
Observe that $\Xi$, as a tropical curve in $M'_{\R}$, is a standard tropical line. Therefore we know how to construct the Lagrangian coamoeba $C'_e \subset T'$ dual to $\Xi$, moreover we also have the function $F': C'_e \rightarrow \R$ defined in  \eqref{Fglob}. Now define
\[ C_e = \beta^{-1}(C'_e) \]
and the function 
\[ F= F' \circ \beta \] 
on $C_e$. We define the Lagrangian lift of $\Xi$ to be the graph of the differential of $F$ extended to the real blow up of $C_e$ at its vertices. 
Notice that if an edge $\check f$ has multiplicity $w$, the coamoeba $C_e$ will contain $w$ (parallel) copies of the face $C'_f$ of $C'_e$. In particular, to construct a PL lift $\hat \Xi$ of $\Xi$ we must attach $w$ copies of $\check f \times C'_f$ to $C_e$. As in Theorem \ref{trop_to_lag}, by rescaling $F$ by a factor of $t$ we can obtain a one parameter family of smooth lifts which converge to $\hat \Xi$.
\begin{figure}[!ht] 
\begin{center}
\includegraphics{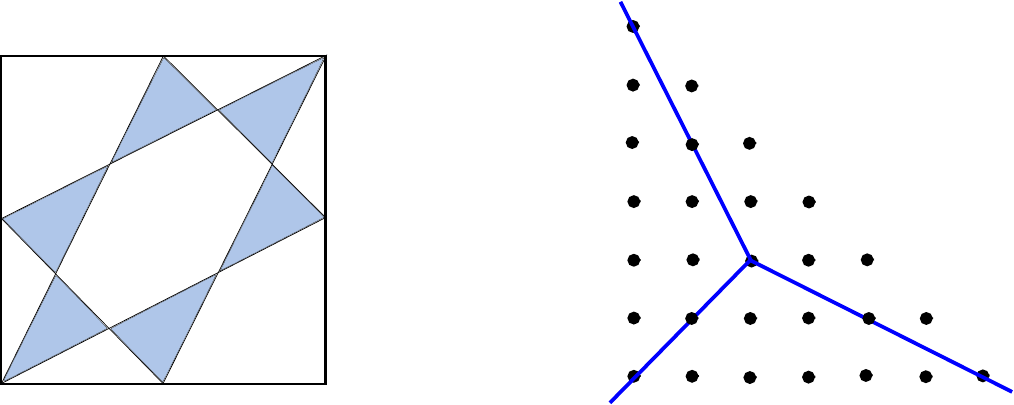}
\caption{} \label{co_am_genus1}
\end{center}
\end{figure}
\end{ex}
As an example consider the tropical curve in Figure \ref{co_am_genus1}. It is not smooth, but its edges have multiplicity one. The associated coamoeba is depicted on the left. Notice that the lift is a torus with three punctures.  

\begin{ex} \label{with_mult}Let $\Xi$ be the standard tropical line, but let us assign to each edge of $\Xi$ a multiplicity $w$. Then $m_e = w^2$.  Figure \ref{co_am_multipl} shows the co-ameba for the case $w=2$
\begin{figure}[!ht] 
\begin{center}
\includegraphics{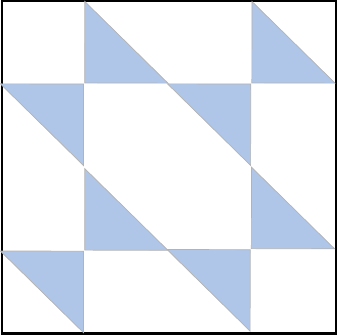}
\caption{} \label{co_am_multipl}
\end{center}
\end{figure}
We have that the lift is a surface of genus $g=\frac{(w-1)(w-2)}{2}$ with $3w$ punctures. As expected, the formula for the genus is the same as for a complex curve of degree $w$ in $\PP^2$. 
\end{ex}

\begin{ex} \label{four_val} Here we give an example of a $4$-valent vertex. Let 
\[ P = \conv \{ (1,0), (0,1), (-1,0), (0,-1) \} \]
and let $\Xi$ be the tropical curve associated to $P$ with the function $\nu =0$. Then $\Xi$ has one vertex and four (unweighted) edges  $\check f_0, \ldots, \check f_3$ generated respectively by the vectors $u_0 = (1,1)$, $u_1=(-1,1)$, $u_2=(-1,-1)$ and $u_3 = (1,-1)$ (see Figure \ref{co_am_4valent}). Represent the torus $T$ as in \eqref{toro} and let $C_e$ be the subset of $T$ depicted in Figure \ref{co_am_4valent} (on the left). 
\begin{figure}[!ht] 
\begin{center}
\includegraphics{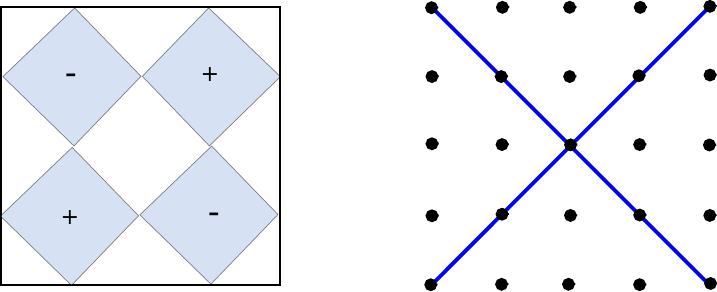}
\caption{} \label{co_am_4valent}
\end{center}
\end{figure}
The edges of the four squares forming $C_e$ are contained in the lines defined by equations
\[ \inn{u_j}{y}= \frac{\pi}{4} \quad \mod \pi \Z \]
Mark each square with a plus or minus sign as in the figure and let $\epsilon(y)$ be equal to $1$ or $-1$ if $y$ is respectively in a plus or minus square. Define the function 
\[ F(y) = \epsilon(y) \left( \prod_{j=0}^{3} \left| \sin \left (\inn{u_j}{y} - \frac{\pi}{4} \right) \right| \right)^{1/2} \]
Then $F$ is well defined on $C_e$ and its behavior near a vertex is similar to the behavior of the function $F$ defined in \eqref{Fglob}. Therefore the graph of the differential of $F$ extends to a Lagrangian embedding of the real blow up of $C_e$ at its vertices. This it the Lagrangian lift of $\Xi$. 
\end{ex}

\subsection{Lifting tropical curves in $\R^n$}

We sketch here how one can construct Lagrangian lifts of smooth tropical curves in higher codimensions, i.e. in $\R^n$ with $n \geq 3$. See also Mikhalkin's paper \cite{mikh_trop_to_lag}. So let $\Xi$ be a smooth tropical curve in $M_{\R}$, with $M \cong \Z^n$. This means that all vertices are trivalent and that any two primitive, integral tangent vectors to the edges emanating from a vertex are part of a basis of $M$. The balancing condition implies that all edges emanating from a vertex $v$ of $\Xi$ are contained in an affine plane, which we denote by $V_v$. The direction of $V_v$ is a two dimensional vector subspace of $M_{\R}$ which we denote $M^{v}_{\R}$, where $M^v$ is the lattice spanned by the integral tangent vectors to the edges emanating from $v$. Similarly, if $\ell$ is an edge of $\Xi$, denote by $V_{\ell}$ the affine line containing $\ell$ and by $M^{\ell}_{\R}$ its direction. To define a Lagrangian lift of $\ell$ inside $M_{\R} \times T$, we can proceed as in \S \ref{lifts_same_crv}, i.e. we consider the quotient as in \eqref{quotient_edge}, then we take a section $\sigma_{\ell}$ as in \eqref{section_edge} and define $\hat \ell_{\sigma_{\ell}} \subset M_{\R} \times T$ as in \eqref{general_lift_edge}, i.e. as the preimage of the section by the quotient map. Notice that $\hat \ell_{\sigma_{\ell}}$ is a Lagrangian submanifold. 

Now let us consider a vertex $v$. Notice that the dual of the lattice $M^{v}$ can be naturally identified with the lattice
\[ N^v = \frac{N}{(M^v_{\R})^{\perp} \cap N}. \]
Moreover if $\Gamma_v$ is the tangent tropical line at $v$ (see \eqref{tangent_tropical_line}), i.e. the cone spanned by the edges emanating from $v$, we have that $\Gamma_v$ is a standard tropical line in $M^{v}_{\R}$ and thus defines a dual Lagrangian coamoeba $C_v$ in the torus
\[ T^v = \frac{N^{v}_{\R}}{N^{v}}. \]
Notice that we naturally have
\[ T^v \cong  \frac{T}{(M^{v}_{\R})^{\perp}}. \]
Therefore if we consider the quotient
\[ v \times T \rightarrow v \times \frac{T}{(M^{v}_{\R})^{\perp}}, \]
we define $\hat v$ to be the preimage of $C_v$ via this quotient. Then we define the PL lift of $\Xi$, depending on the collection $\sigma$ of the sections $\sigma_{\ell}$, to be 
\[ \hat \Xi_{\sigma} = \left( \bigcup_{\dim v =0} \hat v \right) \cup \left( \bigcup_{\dim \ell =1} \hat \ell_{\sigma_\ell} \right). \] 
For $\Xi_{\sigma}$ to be a topological submanifold, $\hat v$ has to glue nicely to $\hat \ell_{\sigma_\ell}$ for all edges $\ell$ emanating from $v$. This can be done as follows. Since $(M^v_{\R})^{\perp} \subset (M^{\ell}_{\R})^{\perp}$, the latter space acts on $T^v$ and we have
\[ \frac{T^v}{(M^{\ell}_{\R})^{\perp}} \cong \frac{T}{(M^{\ell}_{\R})^{\perp}}. \]
Moreover, if $C_{v, \ell}$ is the edge of $C_v$ corresponding to the edge $\ell$, the image of $C_{v, \ell}$ in the latter quotient is a point. We require $\sigma_{\ell}$ to be equal to this point at  $v$. This condition ensures that $\hat \Xi_{\sigma}$ is a topological submanifold. 

The smoothing of $\hat \Xi_{\sigma}$ can be done exactly as in the proof of Theorem \ref{trop_to_lag}. We replace $C_v$ by a Lagrangian pair of pants inside $M^{v}_{\R} \times T^v$, suitably deformed so to coincide with the lifts of the edges away from $v$.  Then we take the preimage of this Lagrangian pair of pants via the quotient map $M^v_{\R} \times T \rightarrow M^{v}_{\R} \times T^v$. 

\subsection{Exact examples}
Here we investigate the exactness condition for the Lagrangian lifts of tropical curves. We prove the following 
\begin{thm}
If a Lagrangian lift in $T^*T$ of a tropical curve $\Xi$ in $M_{\R}$ is exact then all edges of $\Xi$ lie on one dimensional vector subspaces of $M_{\R}$. In particular $\Xi$ can have only one vertex in the origin. 
\end{thm}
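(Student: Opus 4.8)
The plan is to detect exactness through the periods of the Liouville form. Recall that on $T^*T = M_\R \times N_\R/N$ the symplectic form \eqref{st_sympl} is $\omega = d\lambda$ with $\lambda = \frac{1}{\pi}\sum_i x_i\, dy_i$, and that a Lagrangian lift $L$ is exact precisely when $\lambda|_L$ is exact. Since $L$ is Lagrangian, $\lambda|_L$ is closed, so its class lies in $H^1(L,\R)$ and exactness is equivalent to $\int_\gamma \lambda = 0$ for every loop $\gamma \subset L$. I would therefore exhibit, for each edge that does not pass through the origin, a loop in $L$ with non-zero period.

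Fix an edge $\check f$ of $\Xi$ lying in the affine line $V_f = b + M^f_\R$, and let $b$ be an interior point. By construction of the lift (see \S\ref{LagPLift} and the twisted lifts \eqref{general_lift_edge}, as well as the flat regions $\rho_f \times C_f$ produced in the smoothing of Theorem \ref{trop_to_lag}), over $b$ the submanifold $L$ contains a fibre torus sitting at the constant value $x = b$ in the $M_\R$-direction; this fibre is a coset of $(M^f_\R)^\perp / ((M^f_\R)^\perp \cap N)$. Each $v \in (M^f_\R)^\perp \cap N$ then determines a loop $\gamma_v$ in this fibre along which $x \equiv b$ while $y$ is displaced by $v$, and a direct computation with $\lambda = \frac{1}{\pi}\sum_i x_i\, dy_i$ gives
\[ \int_{\gamma_v}\lambda = \frac{1}{\pi}\sum_i b_i v_i = \inn{v}{b}, \]
using the normalization \eqref{dual_basis}. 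If $L$ is exact all these periods vanish, so $\inn{v}{b} = 0$ for every $v \in (M^f_\R)^\perp \cap N$; since this lattice spans $(M^f_\R)^\perp$ over $\R$, we conclude $b \in M^f_\R$. As $\check f$ is parallel to $M^f_\R$ and meets it in the point $b$, the whole edge lies in the one-dimensional vector subspace $M^f_\R$, which proves the first assertion.

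For the last statement, note that every edge now lies on a line through the origin. At any vertex $\check e$ at least two of the emanating edges have distinct primitive directions (for a smooth, hence trivalent, curve the directions are even part of a basis of $M$ subject to the balancing relation). The two lines carrying these edges both pass through the origin and through $\check e$, and two distinct lines through the origin meet only there, forcing $\check e = 0$; since $\Xi$ is connected it then has the single vertex $0$. The one genuinely delicate point is the period computation: one must ensure that $\gamma_v$ can be chosen inside a region where the embedding sends the fibre to the flat slice $\{b\} \times T$, so that $x$ is literally constant along $\gamma_v$. This is guaranteed by the product structure of the lift over the interior of each edge, and homotopy invariance of $\int_{\gamma_v}\lambda$ frees the conclusion from the particular choice of representative.
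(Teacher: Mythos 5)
Your proposal is correct and follows essentially the same route as the paper: both detect exactness by integrating the Liouville form $\lambda = \frac{1}{\pi}\sum_i x_i\, dy_i$ over fibre loops at constant $x=b$ inside the flat region $\rho_f \times C_f$ over each edge, obtaining the period $\inn{v}{b}$ (the paper's constant $c_f$) and concluding that every edge lies on a line through the origin. The only differences are cosmetic: you use the full lattice $(M^f_\R)^\perp \cap N$ of fibre directions, which makes the argument work verbatim in any ambient dimension, and you spell out the ``only one vertex'' conclusion that the paper leaves as an immediate consequence.
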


\begin{proof} In the coordinates $(x,y)$ on $T^*T$ given in \S \ref{setup}, the canonical one form on $T^*T$ is 
\[ \lambda = \sum_{i} x_i dy_i. \]
In the group $H_{1}(\hat \Xi, \Z)$ consider the classes of the loops $\{ x \} \times C_f$ where $x$ is some point on the edge $\check f$ of $\Xi$ and $\hat f = \check f \times C_f$. These can be also considered as classes in $H_1(\mathcal L, \Z)$, since if $x \in \rho_f$ then $\{ x \} \times C_f \subset \mathcal L$. Now notice that $C_f$ is an affine line in $T$ with primitive integral tangent vector $e_f$ orthogonal to $\check f$. In particular $\check f$ lies on an affine line in $M_{\R}$ with equation
\[ \inn{e_f}{x} = c_f \]
for some $c_f \in \R$. Therefore we have that 
\[ \int_{\{ x \} \times C_f}  \lambda = c_f. \]
For $\mathcal L$ to be exact, this integral must be zero,  i.e. $c_f = 0$,  for all edges $\check f$ of $\Xi$. In particular all edges lie on one dimensional vector subspaces of $M_{\R}$. 
\end{proof}

Therefore the only smooth tropical curves whose lifts are exact are the tropical lines. The lifts of the non smooth tropical curves in Examples \ref{non_smooth_trivalent}, \ref{with_mult} and \ref{four_val} are all exact, provided the vertex is the origin. 

\section{Lagrangian submanifolds in toric varieties} \label{intoric}
Here we discuss various examples of Lagrangian submanifolds of toric varieties obtained as lifts of tropical curves inside the moment polytope. The idea to construct surfaces in toric varieties (and in almost-toric ones) from curves in the moment polytope can be traced back to the work of Symington, see Definition 7.3 and Theorem 7.4 of \cite{sym_at}, who calls such surfaces visible surfaces. Symington describes how the surface also depends on the interaction between the curve and the boundary of the moment polytope, which is what we will do here too. The richest and most straight forward family consists of Lagrangian submanifolds with boundary on the toric divisor. Then we discuss a new construction, which we learned from Mikhalkin \cite{mikh_trop_to_lag}, of non-orientable Lagrangian surfaces inside $\C^2$. Finally we give examples of Lagrangian tori in $\PP^2$ or $\PP^1 \times \PP^1$, including some monotone ones. 

Let us first recall a few facts about toric varieties. Let $\dim M_{\R} = 2$. Let $\Delta \subset M_{\R}$ be a Delzant polygon. This means that $\Delta$ is an intersection of half spaces, its edges have rational slope and, at each vertex, the primitive integral vectors tangent to the edges adjacent to the vertex form a basis of $M$. Notice that $\Delta$ does not need to be compact.  We denote by $\partial \Delta$ the boundary of $\Delta$ and by $\Delta^{o}$ its interior. In our context, it is useful to think of the toric variety $X_{\Delta}$ with moment polygon $\Delta$ in the following way. As usual $T = N_{\R} / N$. Consider
\[ \hat \Delta = \Delta \times T \]
Then $X_{\Delta}$ is obtained from $\hat \Delta$ by collapsing the fibres over the boundary $\partial \Delta$ as follows. If $d$ is the interior of an edge, let $M^d_{\R}$ be its tangent subspace. Then $X_{\Delta}$ is obtained by replacing $d \times T$ with $d \times (T/(M^d_{\R})^{\perp})$. At a vertex $v$ we replace $v \times T$ by $v$. We denote by 
\[ \mu: X_{\Delta} \rightarrow \Delta \]
the moment map, i.e. the projection onto $\Delta$. Then $\Delta^o \times T \subset X_{\Delta}$ and the symplectic form of $X_{\Delta}$ restricted to this open set coincides with the usual one.

In the following by a tropical curve $\Xi$ inside $\Delta$ we mean that 
\[ \Xi = \Delta \cap \Xi^{\infty} \]
where $\Xi^{\infty}$ is a tropical curve in $M_{\R}$, none of whose vertices are on $\partial \Delta$. 

\subsection{Lagrangian submanifolds with boundary} \label{lag_boundary}
Let $\Xi$ be a tropical curve in $\Delta$ which satisfies the following property:  
\begin{itemize}
\item[(P)] when an edge $\check f$ of $\Xi$ ends on an edge $d$ of $\Delta$, then the primitive integral tangent vectors to $d$ and $\check f$ form a basis of $M$. In particular if an edge of $\Xi$ ends on a vertex of $\Delta$, then it must satisfy this property with respect to both edges adjacent to this vertex.
\end{itemize}
\begin{thm} \label{lagran_bnd}
If $\Xi \subset \Delta$ is tropical curve satisfying (P), then we can lift $\Xi$ to a Lagrangian submanifold with boundary $\mathcal L$ in $X_{\Delta}$. The boundary of $\mathcal L$ is contained in the toric boundary of $X_{\Delta}$ and consists of the union of the circles $\mu^{-1}(p)$ for all points $p$ of $\Xi \cap \partial \Delta$ which are in the interior of an edge $d$ of $\Delta$. If $v \in \Xi \cap \partial \Delta$ is a vertex of $\Delta$ then $\mu^{-1}(v)$ is a (smooth) interior point of $\mathcal L$. 
\end{thm}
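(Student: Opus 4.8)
The plan is to build $\mathcal{L}$ as the closure in $X_{\Delta}$ of the open Lagrangian $\mathcal{L}^{o}\subset\Delta^{o}\times T$ produced by the construction in the proof of Theorem \ref{trop_to_lag}. The only new content sits over $\partial\Delta$, so I would first arrange that construction so that, near every point where an edge $\check f$ of $\Xi$ meets $\partial\Delta$, the lift is the \emph{flat} product $\check f\times C_{f}$. This is exactly the behaviour of $\mathcal{L}$ over the outer segments $\rho_{f}$ in Step~2 of that proof, so it suffices to place $\Xi\cap\partial\Delta$ inside those flat regions. Recall that $C_{f}$ is the circle in $T$ whose primitive integral direction $e_{f}\in N$ is orthogonal to the tangent direction $m_{f}\in M$ of $\check f$. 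With this normalization the statement reduces to two local computations, one at an interior point of an edge $d$ of $\Delta$ and one at a vertex $v$ of $\Delta$; at every other boundary point the collapsed fibre is disjoint from $\mathcal{L}$, so nothing is added.

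For the edge-interior case, near an interior point $p$ of an edge $d$ the surface $X_{\Delta}$ is modelled on $\C\times\C^{*}$: over $d$ one collapses precisely the circle in the direction $(M^{d}_{\R})^{\perp}$, i.e. the $S^{1}$ of the $\C$-factor, leaving the tangential circle untouched, and $\mu^{-1}(p)\cong T/(M^{d}_{\R})^{\perp}$. Here property (P) does the work: since the primitive tangent vectors $m_{f}$ of $\check f$ and $m_{d}$ of $d$ form a basis of $M$, passing to the dual basis shows that $e_{f}$ (orthogonal to $m_{f}$) is transverse to the collapsed direction (orthogonal to $m_{d}$). Hence $C_{f}$ is complementary to the collapsed circle and projects isomorphically onto $\mu^{-1}(p)$. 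I would then check, in these local coordinates, that the flat product $\check f\times C_{f}$ limits as $\check f\to p$ to the single circle $\mu^{-1}(p)$, and that $\mathcal{L}$ near $p$ is a smoothly embedded $[0,\epsilon)\times S^{1}$ with $\mu^{-1}(p)$ as boundary; Lagrangianity of the interior is inherited from $\mathcal{L}^{o}$ and extends to the boundary by continuity.

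For the vertex case, at a Delzant vertex $v$ the model is $\C^{2}$ with the standard moment map collapsing all of $v\times T$ to the origin $\mu^{-1}(v)$. Choosing coordinates in which the two edges at $v$ have inward primitive directions $(1,0)$ and $(0,1)$, property (P) applied to \emph{both} edges forces the inward direction of $\check f$ to be the bisector $m_{f}=(1,1)$, whence $e_{f}=(1,-1)$. Parametrizing $\check f$ near $v$ by $r\in[0,\epsilon)$ and $C_{f}$ by $\theta$, the flat product maps to $\bigl(\sqrt{r}\,e^{i\theta},\sqrt{r}\,e^{-i\theta}\bigr)$, which in the complex coordinate $u=\sqrt{r}\,e^{i\theta}$ is $u\mapsto(u,\bar u)$: a smoothly embedded Lagrangian disc through the origin. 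Thus $\mu^{-1}(v)$ is a smooth interior point, as claimed.

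I expect the vertex case to be the genuine obstacle: one must verify that collapsing the entire circle $C_{f}$ to a point yields a \emph{smooth} surface rather than a singular one, and this is exactly what (P) guarantees, since the basis condition pins down the winding of $C_{f}$ relative to the collapsing $T$-action so that $C_{f}$ bounds a smooth disc in $\C^{2}$. When (P) is relaxed, for instance in the multiplicity-two situation discussed in the introduction, the same local model produces a M\"obius strip instead, so the hypothesis cannot simply be dropped. The remaining points, namely the disjointness of $\mathcal{L}$ from the collapsed fibres away from $\Xi\cap\partial\Delta$ and the assembly of the local pieces into one smooth manifold with boundary, are routine once the flatness arrangement of the first step is in place.
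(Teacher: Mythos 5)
Your proposal is correct and follows essentially the same route as the paper: define $\mathcal L$ as the closure of the interior lift, use the flat product regions $\rho_f \times C_f$ to reduce to the boundary, and handle the two cases (edge-interior, where (P) makes $C_f$ project bijectively onto $\mu^{-1}(p)$, and vertex, where the local model is the smooth Lagrangian $\{z_1 = \bar z_2\} \subset \C^2$) exactly as the paper does. Your explicit parametrization $u \mapsto (u, \bar u)$ at the vertex is just a more detailed version of the paper's local computation.
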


\begin{proof} 
Let $\Xi^{\infty}$ be the tropical curve in $M_{\R}$ such that $\Xi = \Delta \cap \Xi^{\infty}$ and let $\mathcal L^{\infty}$ be the lift of $\Xi^{\infty}$ constructed in Section \ref{lifting_trop}. As can be seen from the construction, we can assume that if $p \in \partial \Delta \cap \Xi$ and $\check f$ is the edge of $\Xi$ which contains $p$, then $p \in \rho_f \subset \check f$, i.e. $\rho_f \times C_f \subset \mathcal L$. We now define $\mathcal L$ to be the closure of $\mathcal L^{\infty} \cap (\Delta^o \times T)$ inside $X_{\Delta}$. Equivalently we may consider $\mathcal L$ as the image of $\mathcal L^{\infty} \cap (\Delta \times T)$ after the collapsing of the boundary described above. We have to prove that $\mathcal L$ satisfies the statement of the theorem. 

Notice that by the above observation, $\mathcal L^{\infty} \cap (\partial \Delta \times T)$ consists of the union of the circles $p \times C_f$ for all $p \in \partial \Delta \cap \Xi$ and edges $\check f$ with $p \in \check f$. Suppose $p$ is in the interior of an edge $d$ of $\partial \Delta$. Then, the above property of $\Xi$ with respect to $\partial \Delta$ ensures that the circle $p \times C_f$ maps one to one to the circle $\mu^{-1}(p)$ under the quotient $d \times T \rightarrow d \times T/(M^{d}_{\R})^{\perp}$. This implies that $\mu^{-1}(p) \subset \mathcal L$ and that it is a smooth boundary component. 

If $p \in \partial \Delta \cap \Xi$ is a vertex of $\Delta$, then the local model for a tropical curve ending on $p$ and satisfying (P) is 
\[ \begin{split}
         & \Delta = \R_{\geq 0} \times \R_{\geq 0}, \\
         & X_{\Delta} = \C^2, \quad \mu(z_1, z_2 ) = (|z_1|^2, |z_2|^2)           \\
         & \Xi = \{ x_1 -x_2 = 0 \}. 
   \end{split} 
\]
Then it is easy to see that the lift of $\Xi$ is $\mathcal L = \{ z_1 = \bar z_2 \}$, which is smooth. 
\end{proof}

\begin{ex} \label{tori_P2} Let $\Delta$ and $\Xi$ be as in Figure \ref{co_am_torus_p2}. 
\begin{figure}[!ht] 
\begin{center}
\includegraphics{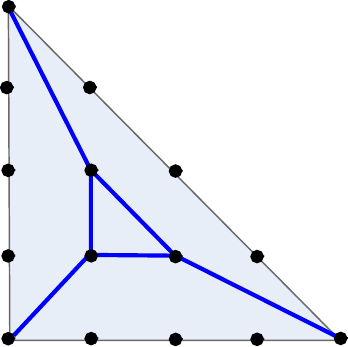}
\caption{} \label{co_am_torus_p2}
\end{center}
\end{figure}
Then $X_{\Delta} = \PP^2$ and $\Xi$ satisfies the property stated above. In particular its lift $\mathcal L$ does not have a boundary, therefore it is a smooth Lagrangian torus in $\PP^2$. The tropical curve comes naturally in a one parameter family, where the parameter is the size of the triangle defined by $\Xi$. Or, even better, the parameter is the area of the holomorphic disc with boundary on $\mathcal L$ which can be constructed over a segment joining an edge of $\Delta$ and the corresponding parallel edge of the triangle. Since this area decreases as we increase the size of the triangle, this implies that elements of this family are in different Hamiltonian classes. Notice also that the same tropical curve can be lifted in different ways, by twisting the lifts of the edges as explained in \S \ref{lifts_same_crv}, therefore we also have the Lagrangian tori $\mathcal L_{\sigma}$ for each choice of twist $\sigma$. Of course all these tori represent the trivial homology class, but we do not know if they represent different Hamiltonian classes. Moreover, what is the relation between these tori and other well known Lagrangian tori in $\PP^2$, e.g. the fibres of the moment map? 
\end{ex}

\begin{ex} \label{tori_P1P1} Let $\Delta$ and $\Xi$ be as in Figure \ref{co_am_torus_p1p1}, then $X_{\Delta} = \PP^1 \times \PP^1$ and $\Xi$ satisfies property (P). 
\begin{figure}[!ht] 
\begin{center}
\includegraphics{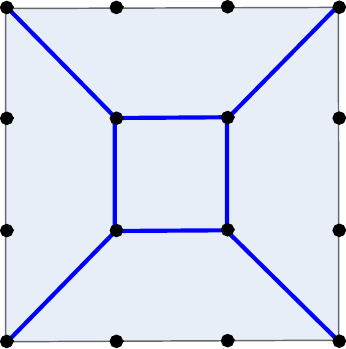}
\caption{} \label{co_am_torus_p1p1}
\end{center}
\end{figure}
Therefore the lift $\mathcal L$ is a Lagrangian torus. Also in this case $\Xi$ comes in a one parameter family, giving rise to a one parameter family of Lagrangian tori which are not Hamiltonian equivalent. Moreover we have the twists $\mathcal L_{\sigma}$. 
\end{ex}

\subsection{Non-orientable Lagrangian surfaces} \label{non_orientable_ex}
It is possible to construct non-orientable Lagrangian surfaces in $\C^2$ as lifts of tropical curves in $\Delta = \R_{\geq 0} \times \R_{\geq 0}$. This idea was explained to us by Mikhalkin (see \cite{mikh_trop_to_lag}). 
\begin{ex}
Consider the tropical curve $\Xi$ in $\Delta = \R_{\geq 0} \times \R_{\geq 0}$ depicted in Figure \ref{non_orientable}.
\begin{figure}[!ht] 
\begin{center}
\includegraphics{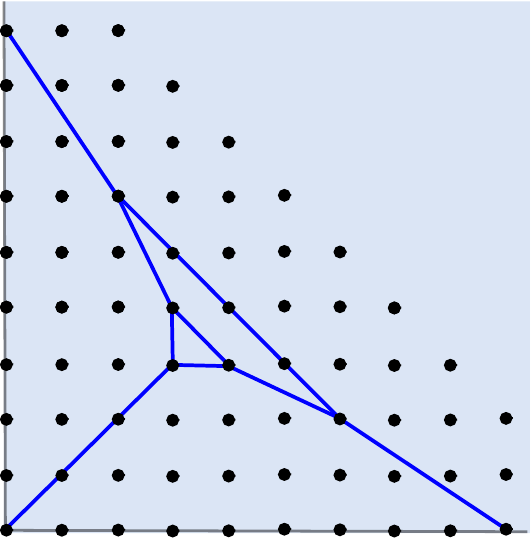}
\caption{} \label{non_orientable}
\end{center}
\end{figure}
The interesting fact is that at the two points where it hits the two edges of $\Delta$, it does not satisfy property (P). In fact at these points the primitive integral tangent vectors to the edge $\check f$ of $\Xi$ and to the edge $d$ of $\Delta$ span a lattice of index two. This implies that the circle $p \times C_f$, which is the intersection of $\mathcal L^{\infty}$ with $d \times T$, maps two to one to the circle $\mu^{-1}(p)$ under the quotient $d \times T \rightarrow d \times T/(M^{d}_{\R})^{\perp}$. In particular the lift $\mathcal L$ is smooth without boundary but not orientable. Indeed in this example $\mathcal L$ is homeomorphic to the connected sum of a genus two surface with two copies of $\RP^2$, it thus has Euler characteristic $-4$. The first examples of non-orientable Lagrangian surfaces in $\C^2$ were given by Givental, \cite{givental_Lag_non_orient} who constructed examples of Euler characteristic $-4k$ for $k\geq 1$. Audin \cite{audin_lag_givental} proves that the Euler characteristic of a non orientable Lagrangian surface in $\C^2$ must be divisible by 4. More recently  Nemirovski \cite{nemirovski_lag_Kleinbot} proved that there exists no Lagrangian embedding of the Klein bottle in $\C^2$. 
\end{ex}

\subsection{Monotone Lagrangian tori}
We give two examples of Lagrangian monotone tori, one in $\PP^2$ and one $\PP^1 \times \PP^1$. These are limit cases of Examples \ref{tori_P2} and \ref{tori_P1P1}.

\begin{ex}
Let $\Delta$ and $\Xi$ be as in Figure \ref{co_am_genus1_p2}.  Then $X_{\Delta}= \PP^2$. 
\begin{figure}[!ht] 
\begin{center}
\includegraphics{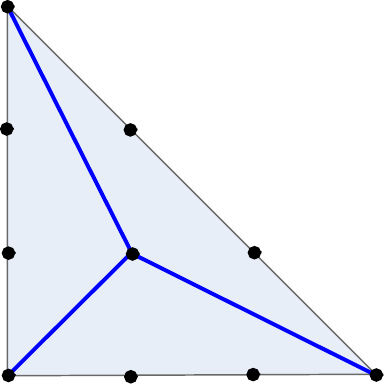}
\caption{} \label{co_am_genus1_p2}
\end{center}
\end{figure}
Notice that $\Xi = \Delta \cap \Xi^{\infty}$, where $\Xi^{\infty}$ is as in Figure \ref{co_am_genus1}, in  particular $\Xi$ is not smooth. Nevertheless we showed in Example \ref{non_smooth_trivalent} that we can construct a Lagrangian lift of $\Xi^{\infty}$ as the graph of the differential of a function $F$ defined on the coamoeba of Figure \ref{co_am_genus1}. As in the proof of Theorem \ref{trop_to_lag}, we can deform this graph so that over each edge $\check f$, just away from the vertex, the lift actually coincides with the cylinder $\check f \times C_f$. In particular we can construct a smooth Lagrangian torus $\mathcal L$ in $\PP^2$, as in Theorem \ref{lagran_bnd}. Notice that we can view this example as a limit case of Example \ref{tori_P2}, where the triangle shrinks down to a point.

\begin{thm} The Lagrangian torus $\mathcal L$ in $\PP^2$ lifting the tropical curve in Figure \ref{co_am_genus1_p2} is monotone. 
\end{thm}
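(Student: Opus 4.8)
The plan is to prove monotonicity by showing that the two relevant homomorphisms on $\pi_2(\PP^2,\mathcal{L})$ — symplectic area and Maslov index — are positively proportional, and to do this by a representation-theoretic argument exploiting the cyclic symmetry of the configuration, rather than by computing individual disk areas. Since $\mathcal{L}\cong T^2$ and $\PP^2$ is simply connected with $H_1(\PP^2)=0$, the long exact sequence of the pair gives
\begin{equation*}
 0 \to H_2(\PP^2;\R) \to H_2(\PP^2,\mathcal{L};\R) \to H_1(\mathcal{L};\R) \to 0,
\end{equation*}
and the map $H_2(\mathcal{L};\R)\to H_2(\PP^2;\R)$ vanishes because $\int_{\mathcal{L}}\omega=0$ while $\omega$ pairs nontrivially with the line class; hence $H_2(\PP^2,\mathcal{L};\R)\cong\R^3$, with the image of $H_2(\PP^2;\R)=\R[H]$ spanning a line. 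Both the area functional $A(\beta)=\int_\beta\omega$ and the Maslov index $\mu$ descend to linear functionals on this $\R^3$ (the latter because $\mathcal{L}$ is orientable and $\PP^2$ is Kähler–Fano). Monotonicity is exactly the assertion that these two functionals are positively proportional, so it suffices to prove proportionality and then fix the sign on one explicit class.

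The key input is the cyclic symmetry. The tropical curve of Figure \ref{co_am_genus1_p2} is invariant under the order-three toric symmetry $\tau$ of $\PP^2$ cyclically permuting the three vertices of $\Delta$ (i.e. the three homogeneous coordinates); since the function $F$ defining the local model is $G$-invariant (Lemma \ref{Ginv}) and the gluing is natural, $\tau$ maps $\mathcal{L}$ to itself. As $\tau$ is a Kähler isometry it preserves $\omega$ and $c_1$, and it preserves $\mathcal{L}$ together with its orientation data, so both $A$ and $\mu$ are invariant under the induced action of $\tau$ on $H_2(\PP^2,\mathcal{L};\R)$. I would then show that the space of $\tau$-invariant functionals is one-dimensional: the exact sequence above is $\tau$-equivariant and splits over $\R$ (Maschke), so $H_2(\PP^2,\mathcal{L};\R)\cong \R[H]\oplus H_1(\mathcal{L};\R)$ as $\Z/3$-representations. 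The line class is fixed, giving one trivial summand, while $\tau$ acts on $H_1(\mathcal{L};\R)\cong\R^2$ as an order-three automorphism with no nonzero fixed vector. Thus the trivial representation occurs with multiplicity exactly one, the invariant functionals form a single line, and since $A$ and $\mu$ both lie on it with $\mu\neq 0$ (for instance $\mu([H])=2\langle c_1,[H]\rangle=6$), we obtain $A=\lambda\mu$. Evaluating on $[H]$, where $A([H])>0$ and $\mu([H])=6>0$, gives $\lambda>0$ and finishes the argument.

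The main obstacle is the claim that $\tau$ acts on $H_1(\mathcal{L};\R)\cong\R^2$ without nonzero fixed vectors. I would establish this by analyzing $\mathcal{L}$ explicitly: it is the torus obtained from the thrice-punctured torus of Example \ref{non_smooth_trivalent} by capping the three punctures at the toric fixed points, and $\tau$ cyclically permutes these three ends. Tracking a basis of $H_1(\mathcal{L})$ under this cyclic permutation should show that the induced map has order three with characteristic polynomial $x^2+x+1$, hence no invariant vector; this is the one step that genuinely uses the structure of the lift rather than formal properties. Should the homological bookkeeping prove delicate, an alternative route is the direct computation: exhibit the three Maslov-index-two holomorphic disks capping the three ends of $\mathcal{L}$, note that $\tau$ permutes them, and conclude by symmetry that their areas coincide, which pins down the proportionality of $A$ and $\mu$ on a spanning set of $H_2(\PP^2,\mathcal{L})$.
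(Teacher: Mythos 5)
Your strategy---forcing $\omega$-area and Maslov index to be proportional on $H_2(\PP^2,\mathcal L;\R)$ by finding an order-three symmetry whose invariant functionals form a line, then fixing the constant on the line class---is genuinely different from the paper's proof, which instead evaluates both functionals on an explicit basis (the line class and two disks), using the vanishing of the Maslov class of $\mathcal L$ (Proposition \ref{maslov_pants}), the formula $\mu(\beta)=2\,\beta\cdot[D]$ of Auroux for the anticanonical divisor $D$, and the fact that the fibre over the vertex of $\Xi$ is the monotone Clifford torus. Your homological reductions (the exact sequence, Maschke splitting, and the sign argument on $[H]$) are fine. The difficulty lies in the symmetry itself and in your key representation-theoretic claim.

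First, the map $[z_0:z_1:z_2]\mapsto[z_1:z_2:z_0]$ does not preserve $\mathcal L$, and not even the PL lift $\hat\Xi$. Normalize $\Delta$ to have vertices $(0,0)$, $(1,0)$, $(0,1)$, so the vertex of $\Xi$ is the barycenter and the edges of $\Xi$ have directions $(-1,-1)$, $(2,-1)$, $(-1,2)$, dual to the edges of $P=\conv\{(0,0),(1,2),(2,1)\}$. By the construction of \S\ref{LagPLift}, the fibre circles over the edges in directions $(2,-1)$ and $(-1,2)$ pass through $[y]=0$ (their dual edges of $P$ contain the origin), while the circle over the edge in direction $(-1,-1)$ is the circle of direction $(1,-1)$ through $[(\pi/2,0)]$, which misses $[0]$. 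The coordinate permutation acts linearly on the fibre torus, hence fixes $[0]$ and cannot carry the first circle onto the third. To obtain a symmetry of $\mathcal L$ you must compose with a translation by an element of the Hamiltonian torus action (and build $\mathcal L$ with $\tau$-equivariant gluing data, a point you assume silently); since the linear part $A$ satisfies $I+A+A^2=0$, every such composition still has order three, and there are exactly three lifts of the base rotation preserving $\mathcal L$, differing by the deck transformations of the covering $\beta$ of Example \ref{non_smooth_trivalent}.

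Second, and this is the genuine gap: for one of these three lifts your key claim is false. Its three fixed points in $\PP^2$ lie in the fibre over the barycenter but miss $\mathcal L$ (which meets that fibre only in the six critical points of $F$, the barycenters of the six triangles of the coamoeba of Figure \ref{co_am_genus1}), so it acts \emph{freely} on the torus $\mathcal L$; the Lefschetz theorem then gives $1-\mathrm{tr}\bigl(\tau_*|_{H_1(\mathcal L;\R)}\bigr)+1=0$, so $\tau_*$ is the identity on $H_1(\mathcal L;\R)$, the invariant functionals are all of $H_2(\PP^2,\mathcal L;\R)^{*}$, and the argument collapses. Since this lift also permutes the three ends of $\mathcal L$ cyclically, your heuristic ``cyclic permutation of the ends forces characteristic polynomial $x^2+x+1$'' cannot be right, and ``tracking a basis'' will give different answers for different lifts. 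The repair is to choose the lift $\tau$ whose fixed points lie on $\mathcal L$ (it fixes the three interior maxima of $F$, rotating each positive triangle of the coamoeba about its barycenter): then $\tau|_{\mathcal L}$ has exactly three fixed points, each of local index $+1$, so its Lefschetz number is $3$, forcing $\mathrm{tr}\bigl(\tau_*|_{H_1(\mathcal L;\R)}\bigr)=-1$ and hence no invariant vectors; with that choice your argument does go through. Your fallback has a separate flaw: the three disks capping the ends are pieces of $\mathcal L$ itself, not holomorphic disks with boundary on $\mathcal L$, and knowing that three symmetric disks have equal area does not by itself pin down proportionality of $\omega$ and $\mu$ on a basis of $H_2(\PP^2,\mathcal L)$.
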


\begin{proof} Recall that in the definition of monotone Lagrangian submanifold we have two linear maps defined on $H_2(\PP^2, \mathcal L)$: the area $\omega: \alpha \mapsto \int_{\alpha} \omega$ and the Maslov index $\mu$. We must prove that these two maps are proportional. A basis of $H_{2}(\PP^2, \mathcal L) \cong \Z^3$ is given by a generator $\tau$ of $H_2(\PP^2)$ and by the classes of two disks with boundaries the $1$-cycles in $\mathcal L$ depicted in Figure \ref{co_am_genus1_cycles}. To be more precise the cycles are the images of the two curves depicted in Figure \ref{co_am_genus1_cycles} via the graph of the differential of the function $F$ on the coamoeba. 
\begin{figure}[!ht] 
\begin{center}
\includegraphics{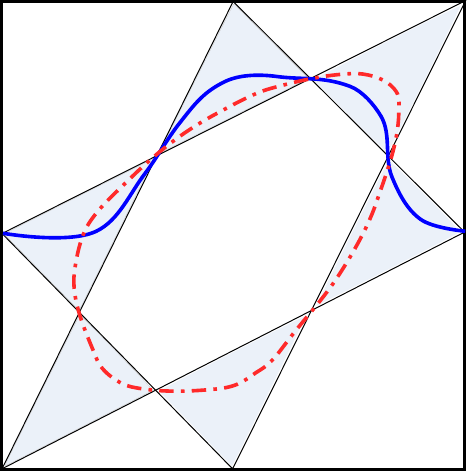}
\caption{Two $1$-cycles in $\mathcal L$ which are the boundaries of two disks in $H_2(\PP^2, \mathcal L)$ } \label{co_am_genus1_cycles}
\end{center}
\end{figure}
We have that 
\[ \mu(\tau) = 2 c_1(\tau) = 6 \]
(see for example Oh \cite{oh_floerCoh_psholdiskI}), while 
\[ \omega(\tau) = 3. \]
Now, let us recall Lemma 3.1 in \cite{Aroux-SYZ} which says that if $L$ is a special Lagrangian submanifold (or  with vanishing Maslov class) inside the complement of an anticanonical divisor $D$ of a K\"alher manifold $X$, then the Maslov index of a disk with boundary on $L$ is $2 \beta \cdot [D]$, i.e. twice the intersection number between the disk and the divisor. In our case $D$ is the toric boundary and we can use this result since $\mathcal L$ has vanishing Maslov class (see Proposition \ref{maslov_pants}). It is not hard to see that both the Maslov index and the area of disks can be computed by pretending that the two $1$-cycles are precisely the ones depicted in Figure \ref{co_am_genus1_cycles} (i.e. they are contained inside the torus fibre over the vertex of $\Xi$), and not the images of those curves via the graph of $dF$. Now, the vertex of $\Xi$ is the barycenter of $\Delta$ and thus the torus fibre over it is the Clifford torus, which is monotone. In particular if $\beta$ is a disk with boundary on the curve drawn with a continuous (blue) line, then
\[ \mu(\beta) = 2 \beta \cdot [D] =  2 \]
and
\[ \omega(\beta) = 1. \]
If $\beta$ is a disk with boundary on the dashed (red) line, then this disk can be taken inside the torus fibre thus
\[ \mu(\beta) = \omega(\beta) = 0. \]
These equalities show that $\mu = 2 \omega$ and thus that $\mathcal L$ is monotone.
\end{proof}
\end{ex}

\begin{ex} Let $\Delta$ and $\Xi$ be as in  Figure \ref{monotone_p1p1}. Then $X_{\Delta} = \PP^1 \times \PP^1$. Here $\Xi = \Delta \cap \Xi^{\infty}$ where, $\Xi^{\infty}$ is a in Example \ref{four_val}, therefore a lift $\mathcal L$ of $\Xi$ inside $X_{\Delta}$ can be constructed from a lift of $\Xi^{\infty}$. 
\begin{figure}[!ht] 
\begin{center}
\includegraphics{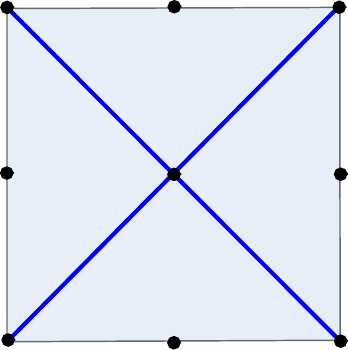}
\caption{} \label{monotone_p1p1}
\end{center}
\end{figure}
The coamoeba is as in Figure \ref{co_am_4valent}. Thus $\mathcal L$ is a Lagrangian torus. Just as in the previous example, one can show that $\mathcal L$ is monotone.
\end{ex}

We wonder what is the relationship between these examples of monotone Lagrangian tori and other known examples, such as the Clifford torus (i.e. the torus fibre over the barycenter of $\Delta$) and the ones in \cite{chekanov_schlenk_tori}, \cite{vianna_infte_montone_tori}, \cite{abreu_gadbled_monotone_lag}. 

\section{Appendix}

Here we sketch a proof of the following

\begin{prop} \label{imh23} For $n=1$ and $2$, the map $\h$ constructed in \S \ref{theconstr}, restricted to $\inter{C}^+$, is a local diffeomorphism .

\end{prop}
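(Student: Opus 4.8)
The plan is to reduce the statement to a Hessian computation and then isolate a single scalar inequality. Since $\h = \nabla F$, the differential of $\h$ at a point is exactly $\hes F$, so by the inverse function theorem it suffices to prove that $\hes F$ is nondegenerate on $\inter C^+$. In fact I would prove the stronger assertion that $\hes F$ is negative definite there, which is the content of Corollary \ref{HessFneg} and is precisely what is needed for the injectivity estimate \eqref{injectiv_ineq} in the proof of Proposition \ref{imh}.

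First I would pass to the logarithm. On $\inter C^+$ all factors satisfy $\sin y_j > 0$, and since $C^+$ is the simplex $\{\,y_j \ge 0,\ \sum_j y_j \le \pi/2\,\}$ we also have $\cos s > 0$ where $s = \sum_j y_j$; hence $F > 0$ and $g := \log F$ is smooth. From $F = e^{g}$ one gets $\hes F = F\,\big(\hes g + \nabla g\,\nabla g^{t}\big)$, so, $F$ being positive, the sign of $\hes F$ agrees with that of $M := \hes g + \nabla g\,\nabla g^{t}$. Writing
\[ g = \frac{1}{n+1}\Big(\log\cos s + \sum_{j=1}^{n+1}\log\sin y_j\Big) \]
one computes $\nabla g = \tfrac{1}{n+1}\big(\cot y_j - \tan s\big)_{j}$ and $\hes g = -\tfrac{1}{n+1}\big(\sec^{2}s\,\mathbf 1\mathbf 1^{t} + \operatorname{diag}(\csc^{2}y_j)\big)$, with $\mathbf 1 = (1,\dots,1)^{t}$. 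Thus $\hes g$ is manifestly negative definite, being minus the sum of a positive multiple of $\mathbf 1\mathbf1^{t}$ and a positive diagonal matrix.

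The remaining difficulty is the positive rank-one correction $\nabla g\,\nabla g^{t}$. Because $\hes g$ is negative definite, $M$ is negative definite if and only if the single scalar condition $\nabla g^{t}(-\hes g)^{-1}\nabla g < 1$ holds, by Sherman--Morrison. Equivalently, testing $M$ against an arbitrary $\xi \neq 0$ and setting $\Sigma = \sum_j \xi_j$, the claim reduces to
\[ \frac{1}{n+1}\Big(\sum_{j}(\cot y_j - \tan s)\,\xi_j\Big)^{2} < \sec^{2}s\,\Sigma^{2} + \sum_{j}\csc^{2}y_j\,\xi_j^{2} \qquad\text{for all }\xi\neq 0. \]
The hard part will be verifying this trigonometric inequality using only the defining constraints $0 < y_j$ and $0 < s < \pi/2$ of $\inter C^+$. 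A crude Cauchy--Schwarz bound on the left-hand side (writing $\cot y_j = \cos y_j\,\csc y_j$ and comparing against the $\csc^2 y_j$ term) appears not to be sharp enough in general, and this is exactly where the restriction to $n=1$ and $n=2$ enters: in those dimensions the quadratic form $M$ has few enough variables that one can diagonalize it directly, or equivalently complete the square and evaluate the relevant $2\times2$ or $3\times3$ minors, checking their signs against the constraints. This explicit low-dimensional verification is the main obstacle and the reason a dimension-free argument is not given.
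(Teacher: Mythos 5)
Your preparatory reductions are all correct: since $\h=\nabla F$, nondegeneracy of $\hes F$ on $\inter C^+$ suffices; the factorization $\hes F=F\,(\hes g+\nabla g\,\nabla g^{t})$ with $g=\log F$, your formulas for $\nabla g$ and $\hes g$, the negative definiteness of $\hes g$, and the Sherman--Morrison reformulation $\nabla g^{t}(-\hes g)^{-1}\nabla g<1$ are all right. But the argument stops exactly where the entire content of Proposition \ref{imh23} lies: the trigonometric inequality is never verified, for $n=1$, $n=2$, or any $n$. ``Diagonalize directly'' or ``evaluate the relevant $2\times2$ or $3\times3$ minors, checking their signs against the constraints'' is a declaration of intent, not an argument; no minor is computed and no sign is checked. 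The easy part of such a check (negativity of the diagonal entries of your matrix $M=\hes g+\nabla g\,\nabla g^{t}$, which follows from $(a-b)^{2}\le 2a^{2}+2b^{2}$ together with $\cot^{2}<\csc^{2}$ and $\tan^{2}<\sec^{2}$) is indeed straightforward, but the order-$2$ and order-$3$ principal minors are precisely the ``direct computation'' that the paper itself declares difficult and deliberately avoids. As written, your proposal reformulates the statement (in the equivalent form of Corollary \ref{HessFneg}) rather than proving it; note also that within the paper the truth of your target inequality for $n=1,2$ is only known as a \emph{consequence} of Proposition \ref{imh23}, so it cannot be invoked without circularity.

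For contrast, the paper's proof is geometric rather than computational. It fixes $y\in\inter C^+$, joins $y$ to the vertices $p_0,\dots,p_n$ by straight segments $\sigma_j$, and shows that the velocity vectors of the image curves $\gamma_j=\h\circ\sigma_j$ are linearly independent, which is equivalent to invertibility of $d\h$ at $y$. The symmetries of $\h$ (Lemma \ref{eq_action}, Lemma \ref{tnbhd}) reduce everything to lines through the single vertex $p_0$, for which explicit sign and monotonicity estimates are proved (Lemma \ref{tangent_ineq}); for $n=2$ an auxiliary family of curves joining two opposite edges is also needed (Lemma \ref{eta_deriv}), and linear independence then follows from the sign patterns \eqref{gamma0}--\eqref{gamma2} plus a short determinant estimate. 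The logical order is the reverse of yours: nonvanishing of $\det\hes F$ is proved first, and negative definiteness (Corollary \ref{HessFneg}) is deduced afterwards from the existence of an interior maximum of $F$. To complete your route you would have to actually establish your scalar inequality on the open simplex for $n=1,2$ --- that verification, and nothing else, is the proposition.
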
 

Unfortunately we are not yet able to provide a rigorous proof for all values of $n$.   Obviously $\h$ is a local diffeomorphism if and only if the determinant of the Hessian of $F$ is nowhere vanishing inside $\inter C$. Proving the latter, via a direct computation, is difficult, so we will follow a different method. The idea is the following. Fix a point $y \in \inter C^+$ and for $j=0, \ldots, n$, let $\sigma_j$ be the straight line passing through $y$ and the vertex $p_j$ and let $v_j$ be a tangent vector of $\sigma_j$ at $y$. 
We will show that the images of the vectors $v_0, \ldots, v_n$ via the differential of $\h$ at $y$ are linearly independent. The advantage of this approach is that the symmetries of $\h$ studied in \S \ref{symmetries} exchange lines emanating from one vertex with lines emanating from another one. Therefore it is enough to study the image, via $\h$, of lines through $p_0$. 

Given equation \eqref{fy}, we can assume the components of $\h$ are
\[ h_j(y) = \frac{\cos \left( 2y_j + \sum_{k \neq j} y_k \right) \prod_{k \neq j} \sin y_k}{\left[ \cos \left( \sum_{k=1}^{n+1}y_k \right) \prod_{k=1}^{n+1} \sin y_k \right]^{\frac{n}{n+1}}}, \] 
where, for simplicity, we removed the factor $\frac{1}{n+1}$. 

The line $\sigma_0$ through $p_0$ and $y$ is of the type
\[ \sigma_0(t) = (a_1t, \ldots, a_{n+1}t)\]
with 
\begin{equation} \label{aj}
    \sum_{j=1}^{n+1} a_j = \frac{\pi}{2}
\end{equation}
We have $\sigma_0(t) \in \inter C$ if and only if $0 < t < 1$. 
Let $\gamma_0(t) = \h(\sigma_0(t))$. Then $\gamma_0= (\gamma_{0,1}, \ldots, \gamma_{0,n+1})$ with
\[ \gamma_{0,j}(t) = \frac{\cos\left( a_j + \frac{\pi}{2}\right)t  \prod_{k \neq j} \sin a_k t}{\left[ \cos \frac{\pi}{2}t \, \prod_{k=1}^{n+1} \sin a_k t \right]^{\frac{n}{n+1}}}.\]

\begin{lem} \label{tangent_ineq} If the $a_j$'s satisfy \eqref{aj} and 
\begin{equation} \label{orderaj}
    0 < a_{n+1} \leq a_n \leq \ldots \leq a_{1}
\end{equation}
then $\gamma$ satisfies
\[ \gamma'_{0,n+1}  \leq \gamma'_{0,n} \leq  \ldots \leq \gamma'_{0,1} < 0 \]
\end{lem}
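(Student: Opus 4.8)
The plan is to exploit the very special form that $\gamma_0=\h\circ\sigma_0$ takes once the parametrization $\sigma_0(t)=(a_1t,\dots,a_{n+1}t)$ with $\sum_k a_k=\tfrac{\pi}{2}$ is substituted. First I would simplify each component. Writing $D(t)=\cos(\tfrac{\pi t}{2})\prod_{k}\sin(a_kt)$ and using $\prod_{k\neq j}\sin(a_kt)=\big(\prod_k\sin(a_kt)\big)/\sin(a_jt)$ together with the addition formula $\cos\big((a_j+\tfrac{\pi}{2})t\big)=\cos(a_jt)\cos(\tfrac{\pi t}{2})-\sin(a_jt)\sin(\tfrac{\pi t}{2})$, the explicit expression for $\gamma_{0,j}$ collapses to
\[
   \gamma_{0,j}(t)=A(t)\cot(a_jt)+B(t),\qquad A(t)=D(t)^{\frac{1}{n+1}},\quad B(t)=-A(t)\tan\!\big(\tfrac{\pi t}{2}\big).
\]
The crucial point is that $A$ and $B$ do not depend on $j$ (and $A=F(\sigma_0(t))>0$ for $0<t<1$), so the whole $j$-dependence of $\gamma_{0,j}$ sits in the factor $\cot(a_jt)$, which enters with the common positive coefficient $A$.

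Next, for the ordering I would regard $\gamma'_{0,j}(t)=\Lambda_t(t,a_j)$ as the $t$-derivative of the single function $\Lambda(t,a)=A(t)\cot(at)+B(t)$ evaluated at $a=a_j$, the functions $A,B$ being held fixed. Since $a_{n+1}\le\cdots\le a_1$, the claimed chain of inequalities is exactly the statement that $\Lambda_t(t,\cdot)$ is nondecreasing on $[a_{n+1},a_1]$, i.e. $\partial_a\Lambda_t\ge0$ there. A direct computation gives
\[
   \partial_a\Lambda_t(t,a)=\csc^2(at)\,\big[-tA'(t)-A(t)+2at\,A(t)\cot(at)\big],
\]
and since $\csc^2>0$ and $A>0$ this reduces, after dividing by $A$ and inserting $tA'/A=\tfrac{1}{n+1}\,t(\log D)'$, to the one-variable transcendental inequality
\[
   2\,\phi(at)\ \ge\ 1+\tfrac{1}{n+1}\Big[\textstyle\sum_{k}\phi(a_kt)-\psi\!\big(\tfrac{\pi t}{2}\big)\Big],\qquad \phi(s)=s\cot s,\ \ \psi(s)=s\tan s .
\]
Because $\phi$ is decreasing on $(0,\tfrac{\pi}{2})$ the left-hand side is smallest when $a$ is largest, so it is enough to verify this at $a=a_1$; this is where the constraint $\sum_k a_k=\tfrac{\pi}{2}$ (equivalently $\sum_k a_kt=\tfrac{\pi t}{2}$) must be used to bound $\sum_k\phi(a_kt)$ against $\psi(\tfrac{\pi t}{2})$.

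Finally, the strict negativity $\gamma'_{0,1}<0$ needs a separate input. The cheap half comes from the chain rule $\gamma_0'(t)=\hes F(\sigma_0(t))\,\dot\sigma_0(t)$ and Corollary \ref{HessFneg}: pairing with $\dot\sigma_0=(a_1,\dots,a_{n+1})$ gives $\sum_j a_j\gamma'_{0,j}=\inn{\hes F\,\dot\sigma_0}{\dot\sigma_0}<0$, which, combined with the ordering, already forces the smallest component $\gamma'_{0,n+1}$ to be negative; but it does not control the largest one, so I would establish $\Lambda_t(t,a_1)<0$ by estimating the explicit formula for $\Lambda_t$ directly (again using $\sum_k a_k=\tfrac{\pi}{2}$). \textbf{The main obstacle} is precisely these two transcendental inequalities: the monotonicity inequality for $\partial_a\Lambda_t$ and the sign of $\Lambda_t(t,a_1)$. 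For $n=1$ and $n=2$ they become estimates in a single real variable (after eliminating the constraint), which can be settled by elementary calculus and the monotonicity of $\phi$ and $\psi$; the absence of a clean argument for all $n$ is exactly what confines Proposition \ref{imh23} to $n=1,2$.
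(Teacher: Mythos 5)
Your algebraic reformulation is correct and attractive: indeed $\gamma_{0,j}(t)=A(t)\cot(a_jt)+B(t)$ with $A=D^{1/(n+1)}$, $B=-A\tan(\pi t/2)$, and this is equivalent to the paper's starting point (the paper's pairwise differences are exactly $\gamma_{0,k+1}-\gamma_{0,k}=A(t)\left[\cot(a_{k+1}t)-\cot(a_kt)\right]$). The gap comes immediately after. The chain of inequalities at the sample points $a_{n+1}\leq\dots\leq a_1$ is \emph{not} ``exactly'' the statement that $\Lambda_t(t,\cdot)$ is nondecreasing on $[a_{n+1},a_1]$: monotonicity implies the chain, but the chain is strictly weaker, because the $a_j$ are finitely many points constrained by $\sum_k a_k=\pi/2$. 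And the pointwise inequality you reduce everything to is in fact false. Expanding your inequality $2\phi(at)\geq 1+\tfrac{1}{n+1}\bigl[\sum_k\phi(a_kt)-\psi(\tfrac{\pi t}{2})\bigr]$ for small $t$, using $\phi(s)=1-\tfrac{s^2}{3}+O(s^4)$ and $\psi(s)=s^2+O(s^4)$, shows that the sign of $\partial_a\Lambda_t(t,a_1)$ is, to leading order in $t$, the sign of
\[
   \frac{1}{n+1}\left(\frac{\sum_k a_k^2}{3}+\frac{\pi^2}{4}\right)-\frac{2a_1^2}{3}.
\]
For $n=2$ take $(a_1,a_2,a_3)=(\tfrac{\pi}{2}-2\epsilon,\epsilon,\epsilon)$: as $\epsilon\to 0$ this quantity tends to $\tfrac{\pi^2}{9}-\tfrac{\pi^2}{6}=-\tfrac{\pi^2}{18}<0$, so for small $\epsilon$ and then small $t$ one has $\partial_a\Lambda_t(t,a_1)<0$. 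Thus $\Lambda_t(t,\cdot)$ genuinely decreases near $a=a_1$, your ``enough to verify at $a=a_1$'' inequality fails, and the proposed route collapses. (The Lemma itself survives in this configuration: to leading order $\Lambda_t(t,a_j)-\Lambda_t(t,a_{j+1})=A(t)(a_j-a_{j+1})\bigl[\tfrac{K}{3a_ja_{j+1}}-\tfrac23\bigr]+O(t^2)$ with $K=\tfrac{\sum_ka_k^2}{3}+\tfrac{\pi^2}{4}$, and the constraint gives $2a_ja_{j+1}\leq\tfrac{\pi^2}{8}<K$ by AM--GM. This makes the loss of information explicit: the constraint controls products of consecutive sample values, not the square of the largest one.)

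The repair is to compare consecutive components directly rather than through a pointwise-in-$a$ derivative, which is what the paper does: $\gamma_{0,k+1}-\gamma_{0,k}=A(t)\bigl[\cot(a_{k+1}t)-\cot(a_kt)\bigr]$ has the closed form $\cos(\tfrac{\pi t}{2})\sin\bigl((a_k-a_{k+1})t\bigr)\prod_{j\neq k,k+1}\sin(a_jt)\,/\,D^{n/(n+1)}$, and one shows this explicit function of $t$ is nonincreasing; the strict negativity $\gamma'_{0,k}<0$ is obtained from the factorization $\gamma_{0,k}=\rho_k\bigl(\prod_{j\neq k}\beta_{j,k}\bigr)^{1/(n+1)}$ into monotone factors together with a ratio trick. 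Two further points. First, your appeal to Corollary \ref{HessFneg} is circular: that corollary is deduced from Proposition \ref{imh23}, which rests on this very Lemma, so negative definiteness of $\hes F$ is not available here (if you want $\sum_j a_j\gamma'_{0,j}<0$, note it equals $(n+1)A''(t)$, which would itself require a proof of concavity of $A$). Second, your closing diagnosis is off: the paper proves this Lemma for all $n$; the restriction of Proposition \ref{imh23} to $n=1,2$ comes from the subsequent geometric step (the choice of auxiliary curves and the determinant/linear-independence argument), not from any failure of the Lemma in higher dimensions.
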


\begin{proof} Observe that $\gamma_{0,j}(t) \geq 0$ if and only if $t \in \left(0, \frac{\pi}{\pi + 2a_j} \right)$. Therefore it follows from \eqref{orderaj} that $\gamma_{0,1}$ is the first to become negative followed by $\gamma_{0,2}$ and so on. Now let
\[ \rho_k(t) = \frac{\cos\left( a_k + \frac{\pi}{2}\right)t }{\left[ \cos \frac{\pi}{2}t \right]^{\frac{n}{n+1}}} \quad \text{and} \quad \beta_{j,k}(t) = \frac{ \sin a_j t}{\sin a_k t} \]
Notice that $\beta_{j,k} > 0$ for all $t \in (0,1)$ and that $\rho_k$ has the same sign as $\gamma_{0,k}$.
We have that 
\[ \gamma_k = \rho_k \left(  \prod_{j \neq k} \beta_{j,k} \right)^{\frac{1}{n+1}} \]
One can show that for all $t \in (0,1)$ 
\[\rho_k' <0  \quad \text{and} \quad \beta'_{j,k} \leq 0 \ \ \text{if} \ a_k \leq a_j. \]
In particular \eqref{orderaj} implies that 
\[ \gamma'_{0,n+1}(t) < 0 \quad \forall t \in \left( \left. 0,  \frac{\pi}{\pi+2 a_{n+1}} \right. \right] \]
and
\[ \gamma'_{0,1}(t) < 0 \quad \forall t \in \left[ \left.  \frac{\pi}{\pi+2 a_{1}}, 1 \right) \right. \]
We have that
\[ \frac{\gamma_{0,k}}{\gamma_{0,j}} = \frac{\cos \left(a_k+\frac{\pi}{2} \right)t \, \sin a_j t }{\cos \left(a_j+\frac{\pi}{2} \right)t \, \sin a_k t}. \]
One can show that 
\[ \left( \frac{\gamma_{0,k}}{\gamma_{0,j}} \right)' \geq 0 \quad \text{on} \ (0,1) \  \text{if} \ a_k \leq a_j. \]
In particular 
\[ \left( \frac{\gamma_{0,n+1}}{\gamma_{0,1}} \right)' \geq 0 \quad \text{on} \ (0,1). \]
Now, since
\[ \gamma'_{0,n+1} = \frac{1}{\gamma_{0,1}}\left(  \left( \frac{\gamma_{0,n+1}}{\gamma_{0,1}} \right)' \gamma_{0,1}^2 + \gamma'_{0,1} \gamma_{0,n+1} \right), \]
we have that all of the above implies that 
\[ \gamma'_{0,n+1} < 0 \quad  \forall t \in \left(   \frac{\pi}{\pi+2 a_{n+1}}, 1 \right). \]
Similarly
\[ \gamma'_{0,1} = \frac{1}{\gamma_{0,n+1}}\left(  \gamma'_{0,n+1} \gamma_{0,1} - \left( \frac{\gamma_{0,n+1}}{\gamma_{0,1}} \right)' \gamma_{0,1}^2  \right), \]
which implies
\[ \gamma'_{0,1}(t) < 0 \quad \forall t \in \left( 0, \frac{\pi}{\pi+2 a_{1}} \right). \]
This shows that both $\gamma'_{0,1}$ and $\gamma'_{0,n+1}$ are negative. A similar argument shows that also $\gamma'_{0,k}$ is negative for all the other $k$'s.

Let us now prove that $\gamma'_{0,k+1} - \gamma'_{0,k} \leq 0$. We have that 
\[ \gamma_{0,k+1}(t) - \gamma_{0,k}(t) =  \frac{\cos \frac{\pi}{2} t \, \sin\left( a_k- a_{k+1} \right)t  \prod_{j \neq k,k+1} \sin a_j t}{\left[ \cos \frac{\pi}{2}t \, \prod_{k=1}^{n+1} \sin a_k t \right]^{\frac{n}{n+1}}}.\]
One can show that the derivative of this function is negative.
\end{proof}

Let us use this lemma to prove that $\h$ is a local diffeomorphism in the case $n=1$. Using the symmetries we can assume that $y \in \mathcal W^+_{12} \cap \mathcal W^+_{2}$, i.e. that $y$ satisfies $0 < y_2 \leq y_1$ and $2y_1 +y_2 \leq \pi/2$. Then consider the lines $\sigma_0$, passing through $p_0$ and $y$, and $\sigma_1$, passing through $p_1$ and $y$. Let $\gamma_{j} = \h \circ \sigma_j$. See Figure \ref{curves_in_amoeba} for a plot of the curves $\sigma_j$ and $\gamma_j$

Then, using the above lemma for $\gamma_0$ we have 
\[ \gamma'_{0,2} \leq \gamma'_{0,1} < 0. \]
To estimate the derivatives of $\gamma_1$, notice that $\gamma_1$ is the image, via the transformation $T^*_1$ defined in \S \ref{symmetries}, of a curve through $\tilde y=T^{*}_1y$ and $p_0$, moreover $\tilde y$ satisfies $0 < \tilde y_2 \leq \tilde y_1$.  Using Lemma \ref{eq_action} and the lemma above we have that the derivatives of $\gamma_1$ satisfy
\[ \gamma'_{1,2} \leq 0 < \gamma'_{1,1} \]
It is easy to see then that $\gamma'_0$ and $\gamma'_{1}$ must be linearly independent, i.e. the differential of $\h$ is invertible at $y$. 
\begin{figure}[!ht] 
\begin{center}
\scalebox{.7}{\includegraphics{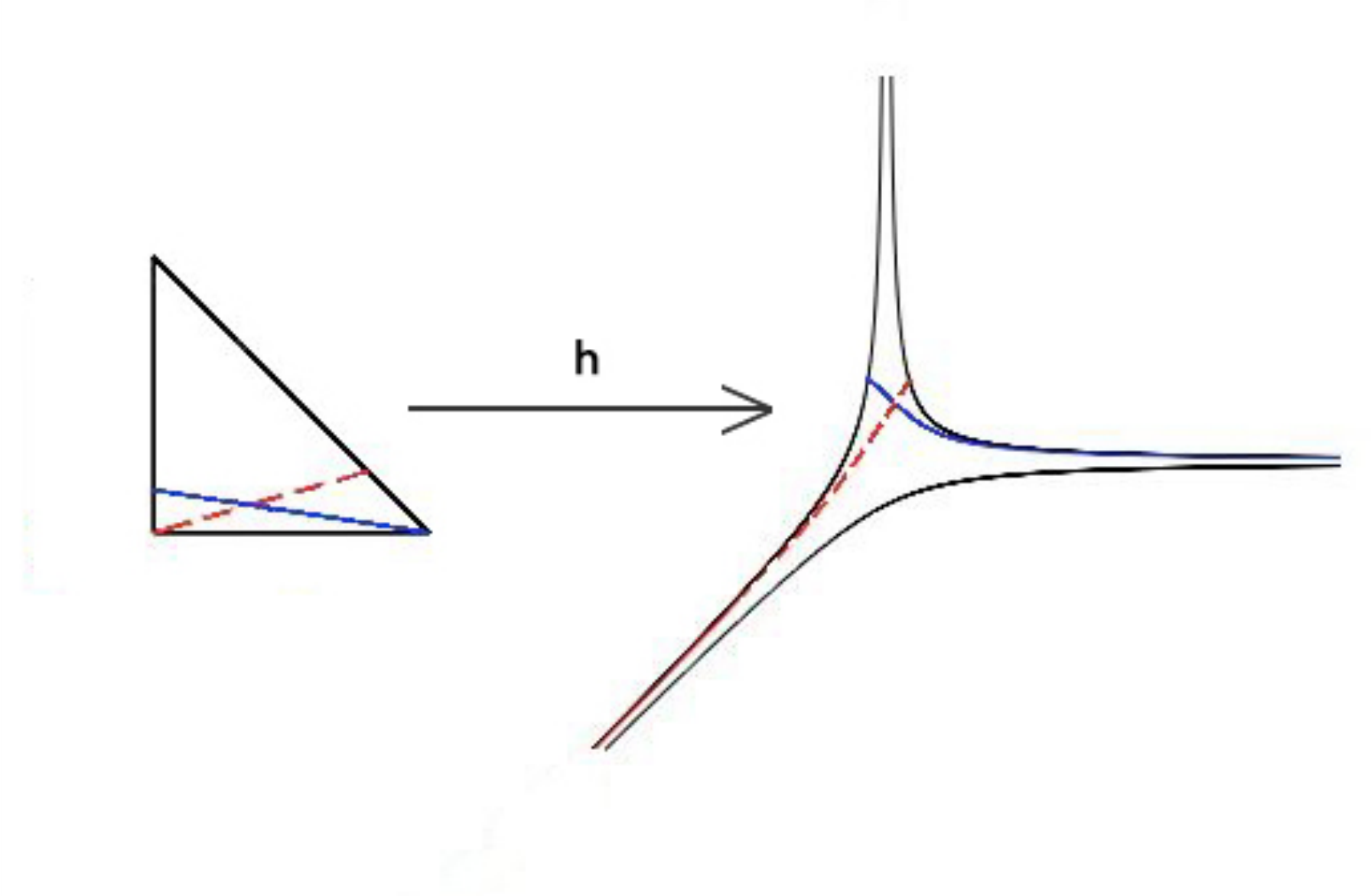}}
\caption{The curves $\sigma_j$ and $\gamma_j$. Dashed (red) lines for $j=0$ and continuous (blue) for $j=1$.} \label{curves_in_amoeba}
\end{center}
\end{figure}

For the case $n=2$, we need to consider another type of curve. Let $\tau$ be a line segment from a point on the edge $E_{12}$ to a point on the egde $E_{03}$, i.e. given $a, b \in (0, \pi/2)$,
\begin{equation} \label{tao}
        \tau(t) = \left( \left(\frac{\pi}{2}-b\right)t, bt, (1-t)a \right).
\end{equation}
We have $\tau(0)=(0,0,a) \in E_{12}$ and $\tau(1) = \left( \frac{\pi}{2}-b, b, 0 \right) \in E_{03}$. Now let 
\[ \eta = \h \circ \tau. \]

\begin{lem} \label{eta_deriv}
If $a,b \in (0, \pi/4)$, then the derivative $\eta'$ of $\eta$ satisfies
\[ \eta'_3 > 0 \quad \text{and} \quad \eta'_j < 0 \ \text{for} \ j=1,2. \]
\end{lem}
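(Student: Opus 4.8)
The plan is to substitute the parametrization \eqref{tao} into the explicit formula for $h_j$ given at the start of the Appendix (the one with the factor $\tfrac1{n+1}$ dropped, $n=2$) and to treat each $\eta_j = h_j\circ\tau$ as a one–variable function on $(0,1)$. Writing $S(t) = y_1+y_2+y_3 = \tfrac{\pi}{2}t+(1-t)a$ and $\phi_j(t)=y_j+S$ for the argument of the cosine in the numerator of $h_j$, one records $y_1'=\tfrac{\pi}{2}-b$, $y_2'=b$, $y_3'=-a$, $S'=\tfrac{\pi}{2}-a$, together with
\[ \phi_3 = \tfrac{\pi}{2}t+2(1-t)a, \qquad \phi_1 = a+(\pi-b-a)t, \qquad \phi_2 = a+(\tfrac{\pi}{2}+b-a)t. \]
Since every factor $\sin y_k$ and $\cos S$ is strictly positive for $t\in(0,1)$, the sign of $\eta_j$ is that of $\cos\phi_j$. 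Here the hypothesis $a<\pi/4$ is decisive: it forces $\phi_3\in(2a,\tfrac{\pi}{2})\subset(0,\tfrac{\pi}{2})$, so $\eta_3>0$ on $(0,1)$, whereas $\phi_1$ and $\phi_2$ increase through $\pi/2$, so $\eta_1,\eta_2$ each change sign once, from positive to negative. This is exactly compatible with the boundary values dictated by Lemma \ref{hnear_faces}: as $t\to 0$ one approaches $\inter E_{12}$, where $h_1,h_2\to+\infty$ and $h_3\to 0$, and as $t\to 1$ one approaches $\inter E_{03}$, where $h_1,h_2\to-\infty$ and $h_3\to+\infty$. Thus the assertions $\eta_3'>0$ and $\eta_1',\eta_2'<0$ amount to saying that each $\eta_j$ is monotone on $(0,1)$.

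For $\eta_3$, which is positive, I would prove $\eta_3'>0$ by showing $(\log\eta_3)'>0$. Logarithmic differentiation of $\eta_3 = \cos\phi_3\,(\sin y_1\sin y_2)^{1/3}(\cos S)^{-2/3}(\sin y_3)^{-2/3}$ gives
\[ (\log\eta_3)' = -\phi_3'\tan\phi_3 + \tfrac23 S'\tan S + \tfrac23 a\cot y_3 + \tfrac13 y_1'\cot y_1 + \tfrac13 y_2'\cot y_2, \]
one negative term against four positive ones. The apparent danger is at the endpoints, but the divergences cancel with the right sign: near $t=1$ the three singular contributions combine to $\tfrac{1}{1-t}\bigl(-1+\tfrac23+\tfrac23\bigr)=\tfrac{1}{3(1-t)}>0$, and near $t=0$ the surviving $\cot y_1,\cot y_2$ terms give $+\infty$. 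What remains is positivity on the whole interval, which I would establish by using the identity $\phi_3'=S'-a$ to pair $-\phi_3'\tan\phi_3$ with $\tfrac23 S'\tan S$ and $\tfrac23 a\cot y_3$, and then bounding the grouped expression with $a,b<\pi/4$.

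For $\eta_1$ and $\eta_2$ the sign change rules out a naive logarithmic derivative, so I would instead factor $\eta_j=\cos\phi_j\,P_j$ with $P_j=\prod_{k\neq j}\sin y_k\,[\cos S\prod_k\sin y_k]^{-2/3}>0$, giving $\eta_j'=P_j\,Q_j$ where $Q_j=\cos\phi_j\,(\log P_j)'-\phi_j'\sin\phi_j$; since $P_j>0$ it suffices to show $Q_j<0$ on $(0,1)$. Because $\phi_j'>0$ and $\sin\phi_j>0$ throughout (as $\phi_j\in(0,\pi)$), the term $-\phi_j'\sin\phi_j$ is negative, and the whole problem reduces to the pointwise inequality $\cos\phi_j\,(\log P_j)'<\phi_j'\sin\phi_j$. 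I expect this inequality to be the main obstacle: the $\cot$ and $\tan$ factors in $(\log P_j)'$ blow up at $t=0$ and $t=1$, so the proof must group the divergent terms so their leading singularities cancel with a definite sign, just as in the $\eta_3$ computation, and it is precisely the constraint $a,b<\pi/4$ that pins down these signs and controls the intermediate range. The verifications are elementary but genuinely computational, in the same spirit as the estimates proved "one can show" in Lemma \ref{tangent_ineq}.
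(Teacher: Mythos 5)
Your proposal is correct and takes essentially the same approach as the paper: the paper's proof likewise just substitutes the parametrization $\tau$ into $\h$, writes out the three components $\eta_j(t)$ explicitly, and asserts that the sign claims ``can be verified by direct computations.'' Your sign analysis (reading off the sign of $\eta_j$ from $\cos\phi_j$ since all the $\sin y_k$ and $\cos S$ factors are positive), the logarithmic differentiation for $\eta_3$, and the cancellation of the endpoint singularities with coefficient $-1+\tfrac23+\tfrac23=\tfrac13$ are all sound and in fact more detailed than the paper's one-line verification; the interior estimates you flag as remaining are precisely the elementary trigonometric computations the paper also leaves to the reader.
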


\begin{proof} The components of $\eta$ are
\[ \begin{split} 
    \eta_1(t) & = \frac{\cos \left( \left( \pi - a-b\right)t + a\right) \, \sin bt \, \sin a(1-t)}{\left( \cos \left( \left( \frac{\pi}{2}-a \right)t+ a \right) \, \sin \left(\frac{\pi}{2}-b \right)t\, \sin bt \, \sin a(1-t) \right)^{\frac{2}{3}}}, \\
    \eta_2(t) & = \frac{\cos \left( \left( \frac{\pi}{2} - a+b\right)t + a\right) \, \sin \left(\frac{\pi}{2}-b \right)t \, \sin a(1-t)}{\left( \cos \left( \left( \frac{\pi}{2}-a \right)t+ a \right) \, \sin \left(\frac{\pi}{2}-b \right)t\, \sin bt \, \sin a(1-t) \right)^{\frac{2}{3}}}, \\
    \eta_3(t) & = \frac{\cos \left( \left( \frac{\pi}{2} - 2a\right)t + 2a\right) \, \sin \left(\frac{\pi}{2}-b \right)t \, \sin bt}{\left( \cos \left( \left( \frac{\pi}{2}-a \right)t+ a \right) \, \sin \left(\frac{\pi}{2}-b \right)t\, \sin bt \, \sin a(1-t) \right)^{\frac{2}{3}}}.
\end{split}
\]
and the statement of the Lemma can be verified by direct computations.
\end{proof}

Now assume, without loss of generality, that $y$ satisfies $0 < y_3 < y_2 < y_1$ and $2y_1 + y_2 + y_3 \leq \frac{\pi}{2}$. Consider the lines $\sigma_0$, $\sigma_1$ and $\sigma_2$ respectively joining $p_0$, $p_1$ and $p_2$ to $y$. Assume that they are oriented so that they point from $p_j$ to $y$. As above, let $\gamma_j = \h \circ \sigma_j$. From Lemma \ref{tangent_ineq} it follows that the derivatives of $\gamma_0$ satisfy
\begin{equation} \label{gamma0}
  \gamma'_{0,3} \leq \gamma'_{0,2} \leq \gamma'_{0,1} < 0.
\end{equation}
Using the symmetries we can also show that the derivatives of $\gamma_1$ satisfy
\begin{equation} \label{gamma1}
    \gamma'_{1,3} \leq \gamma'_{1,2} \leq 0 < \gamma'_{1,1}
\end{equation}
and those of $\gamma_2$ satisfy
\begin{equation} \label{gamma2}  
    \gamma'_{2,3} \leq \gamma'_{2,1} \leq 0 < \gamma'_{2,2}.
\end{equation}
Now let $A$ be the matrix whose columns are $\gamma'_0$, $\gamma'_1$ and $\gamma'_2$, then 
\[ \det A = \gamma'_{0,3}(\gamma'_{1,1} \gamma'_{2,2}- \gamma'_{1,2}\gamma'_{2,1})- \gamma'_{0,1} \gamma'_{1,3} \gamma'_{2,2}+\gamma'_{0,2} \gamma'_{1,3} \gamma'_{2,1}+\gamma'_{0,1} \gamma'_{1,2} \gamma'_{2,3}-\gamma'_{0,2} \gamma'_{1,1} \gamma'_{2,3} \]
\begin{lem}
We have that 
\[ \gamma'_{1,1} \gamma'_{2,2}- \gamma'_{1,2}\gamma'_{2,1} > 0. \]
\end{lem}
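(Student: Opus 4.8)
The plan is to reduce the two columns $\gamma_1'$ and $\gamma_2'$ to the single, explicitly understood curve $\gamma_0$ by means of the equivariance $\h\circ R_k = R_k^*\circ\h$ of Lemma \ref{eq_action}, and then to feed in Lemma \ref{tangent_ineq}. Write $v_j$ for the tangent of $\sigma_j$ at $y$, so that $\gamma_j' = d\h_y(v_j)$. Differentiating $\h\circ R_1 = R_1^*\circ\h$ at $y$ and noting that $dR_1$ carries $v_1$ to the outward tangent at $\tilde y := R_1 y$ of the line through $p_0$ and $\tilde y$, one obtains $\gamma_1' = R_1^*\,\tilde\gamma_0'$, where $\tilde\gamma_0$ is a curve of the type treated in Lemma \ref{tangent_ineq} attached to the configuration $\tilde y$ (here I use that $R_1^*$ is an involution, and I normalise the tangent of $\tilde\gamma_0$ to be $dR_1 v_1$). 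Since $R_1^*(x) = (-x_1,\,x_2-x_1,\,x_3-x_1)$ this gives $\gamma_{1,1}' = -\tilde\gamma_{0,1}'$ and $\gamma_{1,2}' = \tilde\gamma_{0,2}' - \tilde\gamma_{0,1}'$. The identical argument with $R_2^*(x) = (x_1-x_2,\,-x_2,\,x_3-x_2)$ produces $\gamma_{2,1}' = \hat\gamma_{0,1}' - \hat\gamma_{0,2}'$ and $\gamma_{2,2}' = -\hat\gamma_{0,2}'$, where $\hat\gamma_0$ corresponds to $\hat y := R_2 y$.

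Next I would record the monotonicity of the two auxiliary configurations. A direct computation gives $\tilde y = (\tfrac\pi2 - y_1 - y_2 - y_3,\, y_2,\, y_3)$ and $\hat y = (y_1,\, \tfrac\pi2 - y_1 - y_2 - y_3,\, y_3)$. The hypothesis $2y_1 + y_2 + y_3 \le \tfrac\pi2$ forces the first coordinate of $\tilde y$ (respectively the second coordinate of $\hat y$) to be the largest, so, after relabelling by the element of $G^*$ realising the transposition $(1\,2)$ when needed, Lemma \ref{tangent_ineq} applies and yields $\tilde\gamma_{0,2}' \le \tilde\gamma_{0,1}' < 0$ and $\hat\gamma_{0,1}' \le \hat\gamma_{0,2}' < 0$. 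Substituting the four expressions above and simplifying, the minor collapses to
\[ \gamma_{1,1}'\gamma_{2,2}' - \gamma_{1,2}'\gamma_{2,1}' = \tilde\gamma_{0,1}'\hat\gamma_{0,1}' + \tilde\gamma_{0,2}'\hat\gamma_{0,2}' - \tilde\gamma_{0,2}'\hat\gamma_{0,1}'. \]
Writing $a = \tilde\gamma_{0,1}'$, $b = \tilde\gamma_{0,2}'$, $c = \hat\gamma_{0,1}'$, $d = \hat\gamma_{0,2}'$ (all negative, with $b \le a$ and $c \le d$), the right-hand side is $ac + bd - bc = c(a-b) + bd = bd - |c|(a-b)$.

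The main obstacle is that this quantity is \emph{not} positive on the strength of the signs and orderings alone: it is a difference $bd - |c|(a-b)$ of two positive numbers, and one can exhibit abstract values obeying $b\le a<0$ and $c\le d<0$ (for instance $a=-1,\,b=-2,\,c=-10,\,d=-1$) for which $ac+bd-bc<0$. Hence the inequality genuinely uses the quantitative shape of the derivatives, and in particular the fact that $\tilde y$ and $\hat y$ are not independent but share the extremal coordinates $\tfrac\pi2 - y_1 - y_2 - y_3$ and $y_3$. To finish I would insert the closed formulas for $\tilde\gamma_{0,j}'$ and $\hat\gamma_{0,j}'$ coming from the expression for $\gamma_0$ in \S\ref{LagPants}, reduce $ac + bd - bc > 0$ to a trigonometric inequality in $y_1,y_2,y_3$, and verify it by the same kind of direct estimate used for Lemma \ref{eta_deriv}. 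The coplanarity of the segment $\tau$ of \eqref{tao} through $y$ with the directions $v_1,v_2$ can be used to organise this computation, since it expresses $\eta'$ as a combination of $\gamma_1'$ and $\gamma_2'$ and thereby ties the definite signs of $\eta'$ from Lemma \ref{eta_deriv} to the entries of the minor; this final quantitative step is where essentially all of the work lies.
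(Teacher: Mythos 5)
Your reduction is correct as far as it goes: the identities $\gamma_1' = R_1^*\tilde\gamma_0'$, $\gamma_2' = R_2^*\hat\gamma_0'$, the resulting formula $\gamma_{1,1}'\gamma_{2,2}' - \gamma_{1,2}'\gamma_{2,1}' = ac+bd-bc$, and the orderings $b\le a<0$, $c\le d<0$ all check out, and your counterexample $a=-1, b=-2, c=-10, d=-1$ correctly shows that these sign and ordering constraints alone cannot yield positivity. But that is precisely where the proposal stops being a proof: the step you defer to ``a trigonometric inequality in $y_1,y_2,y_3$ \ldots verified by direct estimate,'' and which you yourself describe as containing ``essentially all of the work,'' is never carried out. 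So the proposal reduces the lemma to an unproven inequality that is, if anything, harder to attack head-on than the original statement; this is a genuine gap, not a completed alternative route.

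What you are missing is that no quantitative estimate is needed at all, and in fact the ingredient you mention in your last sentence is the whole proof, used in the opposite direction. The paper takes $q$ to be the intersection of the plane through $\sigma_1,\sigma_2$ with the edge $E_{12}$, and $\tau$ the segment from $q$ to $y$, which is of type \eqref{tao} with $a,b\in(0,\pi/4)$. Since $y$ lies inside the triangle $q\,p_1\,p_2$, the three coplanar tangent vectors satisfy a relation $\beta_0\tau' + \beta_1\sigma_1' + \beta_2\sigma_2' = 0$ with all $\beta_i>0$ (barycentric coordinates of $y$), so applying $d\h_y$ and projecting to the first two coordinates gives
\[
\gamma_{1,1}'\gamma_{2,2}' - \gamma_{1,2}'\gamma_{2,1}' \;=\; \tfrac{\beta_0}{\beta_2}\left( \eta_1'\gamma_{1,2}' - \eta_2'\gamma_{1,1}' \right),
\]
i.e.\ the minor you want is a \emph{positive} multiple of a determinant involving $\eta'$ in place of $\gamma_2'$. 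That determinant is positive by pure sign bookkeeping: Lemma \ref{eta_deriv} gives $\eta_1'<0$, $\eta_2'<0$, while \eqref{gamma1} gives $\gamma_{1,2}'\le 0 < \gamma_{1,1}'$, so $\eta_1'\gamma_{1,2}' \ge 0$ and $-\eta_2'\gamma_{1,1}' > 0$. The point is that replacing one column of the minor by $\eta'$, whose sign pattern is transverse to that of $\gamma_1'$, makes positivity manifest, and the coplanarity/orientation argument transfers it back to the pair $\{\gamma_1',\gamma_2'\}$ for free; trying instead to evaluate the minor on the two columns $\gamma_1',\gamma_2'$ directly, as you do, is exactly what forces the quantitative cancellation analysis you could not complete.
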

\begin{proof} Let us consider the plane containing $\sigma_1$ and $\sigma_2$ and let $q$ be the point of intersection between this plane and the edge $E_{12}$. Now let $\tau$ be the line passing through $q$ and $y$ and oriented so that it points from $q$ to $y$. This line is of type \eqref{tao}, with $a,b \in (0, \pi/4)$, therefore if $\eta = \h \circ \tau$, the derivatives of $\eta$ satisfy Lemma \ref{eta_deriv}. This, together with \eqref{gamma1}, implies that 
\[ \eta'_1 \gamma'_{1,2}- \eta_2' \gamma'_{1,1} > 0. \]
This inequality implies that if we consider the map from the triangle with vertices $q, p_1, p_2$ to $\R^2$ given by $\h$ followed by projection onto the first two coordinates, then the differential of this map is invertible at $y$. Now observe that the pairs of tangent vectors at $y$ given by $\{ \sigma'_1, \sigma'_2 \}$ and $\{ \tau', \sigma'_1 \}$ define the same orientation on this triangle. This  proves the lemma. 
\end{proof}
It is now easy to see that this lemma together with \eqref{gamma0}, \eqref{gamma1} and \eqref{gamma2} implies that $\det A <0$ and hence that the differential of $\h$ is invertible. 

\begin{cor} \label{HessFneg} Assuming $n=1$ or $2$. Let $F$ be as in \eqref{Fglob}, then the Hessian of $F$, restricted to $\inter C^+$, is negative definite.
\end{cor}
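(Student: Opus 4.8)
The plan is to deduce negative definiteness from the nondegeneracy already supplied by Proposition \ref{imh23}, using that the index of a nondegenerate symmetric form cannot jump on a connected set. Since $\h=\nabla F$, its Jacobian is exactly $\hes{F}$, so Proposition \ref{imh23} asserts that $\det\hes{F}(y)\neq 0$ for every $y\in\inter C^+$. The matrix $\hes{F}$ depends continuously on $y$ and never has a zero eigenvalue, so the number of its negative eigenvalues is a locally constant function on $\inter C^+$; as $\inter C^+$ is convex, hence connected, this index is the same at every point. It therefore suffices to show that $\hes{F}$ is negative definite at a single, conveniently chosen point.

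Such a point is provided by a maximum of $F$. On $\inter C^+$ one has $0<\sum_j y_j<\tfrac{\pi}{2}$ and $0<y_j<\tfrac{\pi}{2}$, so $\sin y_j>0$ and $\cos\left(\sum_j y_j\right)>0$; thus the argument of the $(n+1)$-th root in \eqref{Fglob} is strictly positive and $F$ is smooth and strictly positive on $\inter C^+$, while $F$ vanishes on $\partial C^+$. Being continuous on the compact set $\bar C^+$, the function $F$ attains its maximum, and since it is positive in the interior and zero on the boundary this maximum is attained at some interior point $y_*\in\inter C^+$. At $y_*$ the second-order necessary condition for a maximum gives that $\hes{F}(y_*)$ is negative semidefinite; but $\hes{F}(y_*)$ is nondegenerate by Proposition \ref{imh23}, and a negative semidefinite symmetric matrix with no zero eigenvalue is negative definite. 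Hence the constant index equals $n+1$, and $\hes{F}$ is negative definite throughout $\inter C^+$.

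I expect essentially all of the genuine difficulty to sit inside Proposition \ref{imh23}, that is, in proving that $\hes{F}$ is nowhere degenerate; once that nondegeneracy is granted, the passage from \emph{nondegenerate} to \emph{negative definite} is forced by connectedness together with the existence of an interior maximum, and needs no further computation. The only auxiliary points requiring a line of justification are the smoothness and strict positivity of $F$ on $\inter C^+$ and its vanishing on $\partial C^+$ (both immediate from \eqref{Fglob}), and the fact that the maximum is interior (which follows from this sign information).

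As a consistency check, one can instead pin down the sign at the barycenter $b$ of $C^+$ without appealing to a maximum. Since $F$ is $G$-invariant (Lemma \ref{Ginv}) and $b$ is fixed by $G$, twice differentiating $F(R_k\,y)=F(y)$ yields $L_k^{t}\,\hes{F}(b)\,L_k=\hes{F}(b)$, where $L_k$ is the linear part of $R_k$; thus $\hes{F}(b)$ is invariant under the group generated by the $L_k$, which acts by the standard $(n+1)$-dimensional representation of $G\cong S_{n+2}$. As this representation is irreducible, $\hes{F}(b)$ is a scalar multiple of the (definite) invariant form, and the sign of the scalar is fixed by the concavity $\inn{\hes{F}(b)\,v}{v}<0$ along the line from $p_0$ to $b$ coming from Lemma \ref{tangent_ineq}. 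Either route identifies the constant index as $n+1$ and completes the proof.
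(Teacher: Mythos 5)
Your proof is correct and follows essentially the same route as the paper's own argument: locate an interior maximum of $F$ (positive inside, vanishing on the boundary), combine the nondegeneracy of $\hes{F}$ from Proposition \ref{imh23} with connectedness of $\inter C^+$ to propagate the sign everywhere. You are in fact slightly more careful than the paper, which asserts the Hessian is negative definite at the maximum outright, whereas you correctly note it is a priori only negative semidefinite there and that nondegeneracy is what upgrades it to definite.
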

\begin{proof} Since $F$ is positive on $C^+$ and vanishes on its boundary, it must have an interior maximum. At this point the Hessian is negative definite, but since the determinant of the Hessian never vanishes, it must be negative definite everywhere.
\end{proof}
\ \bibliographystyle{plain}

\vspace{1cm}
\begin{flushleft}
Diego MATESSI \\
Dipartimento di Matematica \\
Universit\`a degli Studi di Milano \\
Via Saldini 50 \\
I-20133 Milano, Italy \\
E-mail address: \email{diego.matessi@unimi.it}
\end{flushleft}

\end{document}